\providecommand{\keywords}[1]{\textbf{\textit{Keywords.}} #1}
\providecommand{\AMSclass}[1]{\textbf{\textit{AMS classification.}} #1}
\title{Cocommutative Com-PreLie bialgebras}
\date{}
\author{Lo\"ic Foissy}
\affil{\small{Univ. Littoral Côte d'Opale, UR 2597
LMPA, Laboratoire de Mathématiques Pures et Appliquées Joseph Liouville
F-62100 Calais, France}.\\ Email: \texttt{foissy@univ-littoral.fr}}
\theoremstyle{plain}
\newtheorem{theo}{Theorem}[section]
\newtheorem{lemma}[theo]{Lemma}
\newtheorem{cor}[theo]{Corollary}
\newtheorem{prop}[theo]{Proposition}
\newtheorem{defi}[theo]{Definition}
\theoremstyle{remark}
\newtheorem{remark}{Remark}[section]
\newtheorem{notation}{Notations}[section]
\newcommand{\K}{\mathbb{K}}
\newcommand{\A}{\K G\cdot S(V)}
\newcommand{\tdelta}{\tilde{\Delta}}
\newcommand{\g}{\mathfrak{g}}
\newcommand{\N}{\mathbb{N}}
\newcommand{\h}{\mathcal{H}}
\newcommand{\bftDelta}{\mathbf{\tdelta}}
\newcommand{\id}{\mathrm{Id}}
\newcommand{\Z}{\mathbb{Z}}
\newcommand{\prim}{\mathrm{Prim}}
\newcommand{\vect}{\mathrm{Vect}}
\newcommand{\der}{\mathrm{Der}}
\renewcommand{\ker}{\mathrm{Ker}}
\begin{document}

\maketitle

\begin{abstract}
A Com-PreLie bialgebra is a commutative bialgebra with an extra pre-Lie product satisfying some compatibilities with the product and the coproduct.
We here give a classification of connected, cocommutative Com-PreLie bialgebras over a field of characteristic zero: we obtain a main family 
of symmetric algebras on a space $V$ of any dimension, and another family available only if $V$ is one-dimensional.

We also explore the case of Com-PreLie bialgebras over a group algebra and over a tensor product of a group algebra and of a symmetric algebra.
\end{abstract}

\keywords{Pre-Lie algebras; Zinbiel algebras}
\\

\AMSclass{17D25 16T05}

\tableofcontents

\section*{Introduction}

Pre-Lie algebras, also Vinberg, Gerstenhaber or left-symmetric algebras,were introduced by Vinberg \cite{Vinberg63} and Gerstenhaber \cite{Gerstenhaber63} in the sixties.
Typical examples are given by the flat and torsion free connection on a locally Euclidean space, see \cite{Floystad2018} for more details on these geometric aspects.
Chapoton and Livernet \cite{Chapoton2001} gave a construction of the operad of pre-Lie algebras, combinatorially described by rooted trees
with an operadic composition based on insertion of a tree at a vertex of another tree.
Examples of pre-Lie algebras occur in numerical analysis, Runge-Kutta methods and Butcher's series \cite{Brouder2004,Grossman89,Grossman2005}, 
Quantum Field Theory and Renormalization in the work of Connes and Kreimer \cite{Connes1998}, 
Ecalle's mould calculus and arborification's process \cite{Ebrahimi-Fard2017-2,Ecalle2004,Fauvet2017}, etc.
Com-PreLie bialgebras, introduced in \cite{Foissy28,Foissy30}, are commutative bialgebras with an extra pre-Lie product, compatible
with the product and coproduct, see Definition \ref{defi1} below. They appeared in Control Theory: the Lie algebra of the group of Fliess operators
\cite{Gray2011} naturally owns a Com-PreLie bialgebra structure, and its underlying bialgebra is a shuffle Hopf algebra.
Free (non unitary) Com-PreLie bialgebras were also described, in terms of partitioned rooted trees.\\

We here give examples of cocommutative Com-PreLie bialgebras, and, in particular, we classify all connected cocommutative (as coalgebras)
Com-PreLie bialgebras. We first introduce in Theorem \ref{theo2} a family $S(V,f,\lambda)$ of cocommutative and connected Com-PreLie bialgebras,
where $V$ is a vector space, $f$ a linear form on $V$ and $\lambda$ a scalar. These objects are classified up to an isomorphism
in Proposition \ref{prop4}. As a bialgebra, $S(V,f,\lambda)$ is the usual symmetric algebra on $V$ and, for any $x,x_1,\ldots,x_k\in V$,
\[x\bullet x_1\ldots x_k=\sum_{I\subsetneq [k]} |I|! \lambda^{|I|} f(x) \prod_{i\in I} f(x_i) \prod_{i\notin I} x_i.\]

Secondly, we give in Theorem \ref{theo5} all homogeneous prelie products on the polynomial algebra $\K[X]$, making it a Com-PreLie algebra:
we obtain four families. Among them, only a few satisfies the compatibility with the coproduct: we only obtain a one-parameter family
$\g^{(1)}(1,a,1)$, where $a$ is a scalar, see Proposition \ref{prop8}. For any $k,l\in \N$, in $\g^{(1)}(1,a,1)$,
\[X^k \bullet X^l=\frac{k}{l+1}X^{k+l}.\]
The underlying Lie algebras of these pre-Lie algebras are described in Proposition \ref{prop9} as semi-direct products of abelian
or Faà di Bruno Lie algebras.
We prove in Theorem \ref{theo9} that these examples cover all the connected cocommutative cases. Namely, if $A$ is a cocommutative Com-PreLie bialgebra, connected as a coalgebra, then it is isomorphic to $S(V,f,\lambda)$
or to $\g^{(1)}(1,a,1)$ (we should precise here that we work on a field of characteristic zero). 

We then turn to the non connected case and start with pre-Lie products on group algebras. We prove that if $G$ is an abelian group, then any pre-Lie
product $\bullet$ on $\K G$ making it a Com-PreLie bialgebra is given, for any $g,h\in G$, by
\[g\bullet h=\lambda(g,h)(g-gh),\]
where $(\lambda(g,h))_{g,h\in G}$ is a family of scalars satisfying certain conditions exposed in Theorem \ref{theo16}.
These conditions imply that if $G$ is a finite group, then $\bullet=0$. If $G=\Z$, we prove in Theorem \ref{theo18} that
 there exist two families of non trivial pre-Lie products on the Laurent polynomial algebra $\K\Z=\K[X,X^{-1}]$ making it a Com-PreLie bialgebra.
For the first one, there exist $k_0\in \Z$, nonzero, $a\in \K$, nonzero, such that
 \begin{align*}
& \forall k,l\in \Z,&X^k \bullet X^l&=a\delta_{l,k_0} k(X^k-X^{k+l}).
 \end{align*}
For the second one, there exist $\alpha,\beta \in \K\setminus \{0\}$, with a technical non vanishing condition, and there exists $N\geq 1$ such that
  \begin{align*}
& \forall k,l\in \Z,&X^k \bullet X^l&=\begin{cases}
\displaystyle \frac{\alpha\beta}{\left(\frac{l}{N}-1\right)\alpha-\left(\frac{l}{N}-2\right)\beta}k(X^k-X^{k+l})
\mbox{ if }N\mid l,\\
0\mbox{ otherwise}.
\end{cases}
 \end{align*}

We end by several results on the Hopf algebra $\K G\otimes S(V)$, where $G$ is an abelian group and $V$ a vector space.
In particular, we give in Theorem \ref{theo23} all possible pre-Lie products making it a Com-PreLie bialgebra,
with the extra conditions that $S(V)$ is a non trivial PreLie subalgebra, isomorphic to $S(V,f,\lambda)$. \\

This text is organized in six sections. The first one gives reminders and definitions on Com-PreLie bialgebras and Zinbiel-PreLie bialgebras.
The second one is devoted to the existence of Com-PreLie bialgebras $S(V,f,\lambda)$, and the third one to the classification of homogeneous
pre-Lie products on $\K[X]$. The theorem of classification of connected cocommutative Com-PreLie bialgebras is proved in the fourth section.
The study of pre-Lie products on a group algebra is done in the fifth section and the last one deals with the general case $\K G\otimes S(V)$.

\begin{notation} \begin{enumerate}
\item We denote by $\K$ a commutative field of characteristic zero. All the objects (vector spaces, algebras, coalgebras, prelie algebras$\ldots$)
in this text will be taken over $\K$.
\item For any $n\in \mathbb{N}$, we denote by $[n]$ the set $\{1,\ldots,n\}$.
\item Let $V$ be a vector space. We denote by $S(V)$ the symmetric algebra of $V$. It is a Hopf algebra, with the coproduct defined by
\begin{align*}
&\forall v\in V,&\Delta(v)&=v\otimes 1+1\otimes v.
\end{align*}
\end{enumerate}\end{notation}

 \section{Com-PreLie and Zinbiel-PreLie algebras}

\begin{notation} \end{notation}

\begin{defi}\label{defi1}
\begin{enumerate}
\item A \emph{Com-PreLie algebra} \cite{Mansuy2013}  is a family $A=(A,\cdot,\bullet)$, where $A$ is a vector space and 
$\cdot$, $\bullet$ are bilinear products on $A$, such that
\begin{align*}
&\forall a,b\in A,&a\cdot b&=b\cdot a,\\
&\forall a,b,c\in A,&(a\cdot b)\cdot c&=a\cdot (b\cdot c),\\
&\forall a,b,c\in A,&(a\bullet b)\bullet c-a\bullet(b\bullet c)&=(a\bullet c)\bullet b-a\bullet(c\bullet b)&\mbox{(pre-Lie identity)},\\
&\forall a,b,c\in A,&(a\cdot b)\bullet c&=(a\bullet c)\cdot b+a\cdot (b\bullet c)&\mbox{(Leibniz identity)}.
\end{align*}
In particular, $(A,\cdot)$ is an associative, commutative algebra and $(A,\bullet)$ is a right pre-Lie algebra.
We shall say that a Com-Prelie algebra is unitary if the associative algebra $(A,\cdot)$ has a unit, which will be denoted by $1$.
\item A \emph{Com-PreLie bialgebra} is a family $(A,\cdot,\bullet,\Delta)$, such that:
\begin{enumerate}
\item $(A,\cdot,\bullet)$ is a unitary Com-PreLie algebra.
\item $(A,\cdot,\Delta)$ is a bialgebra.
\item For any $a,b\in A$,
\[\Delta(a\bullet b)=a^{(1)}\otimes a^{(2)}\bullet b+a^{(1)}\bullet b^{(1)}\otimes a^{(2)}\cdot b^{(2)},\]
with Sweedler's notation $\Delta(x)=x^{(1)}\otimes x^{(2)}$.
\end{enumerate}
\item A \emph{Zinbiel-PreLie algebra}  is a family $A=(A,\prec,\bullet)$, where $A$ is a vector space and 
$\prec$, $\bullet$ are bilinear products on $A$, such that
\begin{align*}
&\forall a,b,c\in A,&(a\prec b) \prec c&=a\prec (b\prec c+c\prec b)&\mbox{(Zinbiel identity)},\\
&\forall a,b,c\in A,&(a\bullet b)\bullet c-a\bullet(b\bullet c)&=(a\bullet c)\bullet b-a\bullet(c\bullet b)&\mbox{(pre-Lie identity)},\\
&\forall a,b,c\in A,&(a\prec b)\bullet c&=(a\bullet c)\prec b+a\prec (b\bullet c)&\mbox{(Leibniz identity)}.
\end{align*}
In particular, $(A,\prec)$ is a Zinbiel algebra (or commutative half-shuffle algebra) \cite{Foissy21,Loday1995,Schutzenberger1958}.
The product $\cdot$ defined on $A$ by $a\cdot b=a\prec b+b\prec a$ is associative and commutative,
and $(A,\cdot,\prec)$ is a Com-PreLie algebra.
\item A \emph{Zinbiel-PreLie bialgebra} is a family $(A,\cdot,\prec,\bullet,\Delta)$ such that:
\begin{enumerate}
\item $(A,\cdot,\bullet,\Delta)$ is a Com-PreLie bialgebra. We denote by $A_+$ the augmentation ideal of $A$, and by $\tdelta$ the coassociative
coproduct defined by
\[\tdelta:\left\{\begin{array}{rcl}
A_+&\longrightarrow& A_+\otimes A_+\\
a&\longrightarrow& \Delta(a)-a\otimes 1-1\otimes a.
\end{array}\right.\]
\item $(A_+,\prec,\bullet)$ is a Zinbiel-PreLie algebra, and the restriction of $\cdot$ on $A_+$ is the commutative product induced by $\prec$:
for any $x,y \in A_+$, $x\prec y+y\prec x=x\cdot y$.
\item For any $a,b\in A_+$, with Sweedler's notation $\tdelta(x)=x'\otimes x''$, for any $a,b \in A_+$,
\[\tdelta(a\prec b)=a'\prec b'\otimes a''\cdot b''+a'\prec b\otimes a''+a'\otimes a''\cdot b+a\prec b'\otimes b''+a\otimes b.\]
\end{enumerate}\end{enumerate}\end{defi}

\begin{remark}\label{rk1.1} \begin{enumerate}
\item If $(A,\cdot,\bullet,\Delta)$ is a Com-PreLie bialgebra, then for any $\lambda \in \K$, $(A,\cdot,\lambda \bullet,\Delta)$ also is. 
\item If $(A,\prec,\bullet,\Delta)$ is Zinbiel-PreLie bialgebra, denoting $\cdot$ the product induced by $\prec$,
$(A,\cdot,\bullet,\Delta)$ is a Com-PreLie bialgebra.
\item If $A$ is a Zinbiel-PreLie bialgebra, we extend $\prec$ to $A_+\otimes A+A\otimes A_+$ 
by $a\prec 1=a$ and $1\prec a=0$ for any $a\in A_+$. Note that $1\prec 1$ is not defined. 
\item If $(A,\cdot,\bullet)$ is a unitary Com-PreLie algebra, for any $x\in A$,
\begin{align*}
1\bullet x&=(1\cdot 1)\bullet x=(1\bullet x)\cdot 1+1\cdot (1\bullet x)=2(1\bullet x).
\end{align*}
Hence, for any $x\in A$, $1\bullet x=0$.
\item If $(A,\cdot,\bullet,\Delta)$ is a Com-PreLie bialgebra, we denote by $\prim(A)$ the subspace of primitive elements of $A$, that is to say
\[\prim(A)=\{x\in A\mid \Delta(x)=x\otimes 1+1\otimes x\}.\]
For any $x\in \prim(A)$,
\begin{align*}
\Delta(x\bullet 1)&=x\otimes 1\bullet 1+1\otimes x\bullet 1+1\otimes x\bullet 1+x\otimes 1\bullet 1=1\otimes x\bullet 1+1\otimes x\bullet 1.
\end{align*}
So $x\bullet 1\in \prim(A)$. We shall consider the map
\begin{align*}
f_A&:\left\{\begin{array}{rcl}
\prim(A)&\longrightarrow&\prim(A)\\
x&\longrightarrow&x\bullet 1.
\end{array}\right.
\end{align*}
\end{enumerate}\end{remark}

\section{Examples on symmetric algebras}

Our goal in this section is to prove the following theorem:

\begin{theo}\label{theo2}
Let $V$ be a vector space, $f\in V^*$, $\lambda \in \K$. We give $S(V)$ the product $\bullet$ defined by
\begin{align*}
&\forall x\in S(V),&1\bullet x&=0,\\
&\forall x,x_1,\ldots,x_k \in V,&x\bullet x_1\ldots x_k&=\sum_{I\subsetneq [k]} |I|! \lambda^{|I|} f(x) \prod_{i\in I} f(x_i) \prod_{i\notin I} x_i,\\
&\forall x_1,\ldots,x_k \in V,\:\forall x\in S(V),&x_1\ldots x_k\bullet x&=\sum_{i=1}^k x_1\ldots (x_i \bullet x)\ldots x_k.
\end{align*}
Then $(S(V),m,\bullet,\Delta)$ is a Com-PreLie bialgebra, denoted by $S(V,f,\lambda)$. 
\end{theo}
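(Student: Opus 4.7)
The plan is to verify each non-trivial axiom of a Com-PreLie bialgebra separately, after checking that the definition is self-consistent. Consistency amounts to observing that the third defining line, taken with $k=1$, is tautological (hence compatible with the second line at $k=1$), and that the formulas give $1\bullet x=0$ and $x\bullet 1=0$ (the latter since the sum over $I\subsetneq[0]=\emptyset$ is empty). The Leibniz identity $(a\cdot b)\bullet c=(a\bullet c)\cdot b+a\cdot(b\bullet c)$ is then built into the definition: the third defining line says exactly that for each $c\in S(V)$, the map $D_c:a\mapsto a\bullet c$ is a derivation of $(S(V),\cdot)$.

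For the preLie identity, I reduce to $a\in V$. Set
\[P(a,b,c):=[(a\bullet b)\bullet c-a\bullet(b\bullet c)]-[(a\bullet c)\bullet b-a\bullet(c\bullet b)].\]
Then $(a\bullet b)\bullet c-(a\bullet c)\bullet b=[D_c,D_b](a)$ is a commutator of two derivations, hence a derivation in $a$; and $a\bullet(b\bullet c-c\bullet b)=D_{b\bullet c-c\bullet b}(a)$ is also a derivation in $a$. Therefore $a\mapsto P(a,b,c)$ is a derivation of $(S(V),\cdot)$, so vanishes on all of $S(V)$ as soon as it vanishes on $V$. For $a\in V$ and monomials $b=x_1\cdots x_k$, $c=y_1\cdots y_l$, I then expand both halves using the explicit formula combined with Leibniz (needed for $(a\bullet b)\bullet c$, since $a\bullet b$ is a polynomial in the $x_i$'s), and after a reindexing $I':=I\cup\{j\}$ which absorbs $|I'|\cdot(|I'|-1)!$ into $|I'|!$, I verify term-by-term cancellation against the $b\leftrightarrow c$ swapped sums.

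For the coproduct compatibility, I rely on the following reduction lemma: if the formula $\Delta(a\bullet b)=a^{(1)}\otimes a^{(2)}\bullet b+a^{(1)}\bullet b^{(1)}\otimes a^{(2)}\cdot b^{(2)}$ holds for two elements $a,a'$ and all $b$, then it holds for $aa'$. This is a direct calculation: expand $\Delta((aa')\bullet b)=\Delta(a\bullet b)\Delta(a')+\Delta(a)\Delta(a'\bullet b)$ using multiplicativity of $\Delta$ and Leibniz for $\bullet$, then regroup the four resulting terms into the desired right-hand side for $aa'$ (again using Leibniz). Together with the trivial case $a=1$, this reduces the check to $a\in V$. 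For $a\in V$ and $b=x_1\cdots x_k$, I expand $\Delta(a\bullet b)$ using the defining formula and the multiplicativity of $\Delta$ on the product $\prod_{i\notin I}x_i$, and similarly expand the right-hand side using $\Delta(a)=a\otimes 1+1\otimes a$ and the explicit formula for $a\bullet\prod_{i\in A}x_i$. Both expansions collapse to the sum over ordered partitions $[k]=I\sqcup J\sqcup K$ with $I\ne[k]$, with coefficient $|I|!\,\lambda^{|I|}f(a)\prod_{i\in I}f(x_i)$ on $\prod_{j\in J}x_j\otimes\prod_{k\in K}x_k$.

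The hard part, I expect, is the preLie computation with $a\in V$: both $(a\bullet b)\bullet c$ and $a\bullet(b\bullet c)$ unfold into double sums over subsets of $[k]$ and of $[l]$ with factorial and $\lambda$-weights, and the match after subtracting the $b\leftrightarrow c$ version requires careful bookkeeping of which indices were \emph{hit} by the inner versus the outer $\bullet$. The coproduct verification and both reduction lemmas are more mechanical.
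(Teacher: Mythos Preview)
Your proposal is correct and complete in outline; the reductions to $a\in V$ via the derivation property are sound, and the combinatorial bookkeeping you anticipate does go through. The paper, however, organizes the argument differently: it introduces two operators on $S(V)$, namely the derivation $\partial$ extending $f$ and the map $\phi(x_1\cdots x_k)=\sum_{I\subsetneq[k]}|I|!\,\lambda^{|I|}\prod_{i\in I}f(x_i)\prod_{i\notin I}x_i$, observes the factorization $u\bullet v=\partial(u)\,\phi(v)$ for \emph{all} $u,v\in S(V)$, and then proves (Lemma~\ref{lem3}) that $\partial$ is a derivation, that $\partial\!\circ\!\phi(v)\,\phi(w)-\phi(\partial(v)\phi(w))$ is symmetric in $v,w$, and that $\Delta\circ\phi=(\phi\otimes Id)\circ\Delta+1\otimes\phi$. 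These three facts give Leibniz, preLie, and coproduct compatibility each in one line, with no preliminary reduction step. Your route and the paper's converge: for $a\in V$ one has $a\bullet b=f(a)\,\phi(b)$, so your preLie check for $a\in V$ is exactly the symmetry identity of Lemma~\ref{lem3}, and your coproduct check is exactly the formula for $\Delta\circ\phi$. The operator factorization buys a uniform treatment of all $a$ and packages the hard combinatorics into reusable identities about $\phi$; your approach is more elementary and avoids naming auxiliary maps, at the price of the extra reduction argument (which the paper does use elsewhere, in the proof of Proposition~\ref{prop8}).
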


\subsection{Two operators}

Let us fix a vector space $V$ and $f\in V^*$. We shall consider the two following operators:
\begin{align*}
\partial:&\left\{\begin{array}{rcl}
S(V)&\longrightarrow& S(V)\\
x_1\ldots x_k&\longrightarrow&\displaystyle \sum_{i=1}^k x_1\ldots x_{i-1}f(x_i)x_{i+1}\ldots x_k,
\end{array}\right.\\
\phi:&\left\{\begin{array}{rcl}
S(V)&\longrightarrow& S(V)\\
x_1\ldots x_k&\longrightarrow&\displaystyle\sum_{I\subsetneq [k]}\lambda^{|I|} |I|! \prod_{i\in I} f(x_i) \prod_{i\notin I} x_i,
\end{array}\right. \end{align*} 
where $x_1,\ldots,x_k$ are elements of $V$.

\begin{lemma} \label{lem3}\begin{enumerate}
\item For any $u,v,w \in S(V)$,
\begin{align*}
\partial(uv)&=\partial(u)v+u\partial(v),\\
\partial \circ \phi(v)\phi(w)-\phi(\partial(v)\phi(w))&=\partial \circ \phi(w)\phi(v)-\phi(\partial(w)\phi(v)).
\end{align*}
\item For any $u\in S(V)$,
\begin{align*}
\Delta \circ \partial(u)&=(\partial \otimes \id_{S(V)})\circ \Delta(u)=(\id_{S(V)} \otimes \partial)\circ \Delta(u),\\
\Delta \circ \phi(u)&=(\phi \otimes \id_{S(V)})\circ \Delta(u)+1\otimes \phi(u).
\end{align*}\end{enumerate}\end{lemma}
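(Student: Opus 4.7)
My plan is to treat the four identities in turn. The derivation rule for $\partial$ in part 1 is immediate from the definition: $\partial$ is the unique derivation of $S(V)$ extending the linear form $f\colon V\to\K\subset S(V)$, so $\partial(uv)=\partial(u)v+u\partial(v)$ is built in.

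For part 2, I proceed in two steps. For $\partial$ itself, I observe that $\Delta\circ\partial$, $(\partial\otimes \mathrm{Id})\circ\Delta$, and $(\mathrm{Id}\otimes\partial)\circ\Delta$ are all derivations from $S(V)$ to $S(V)\otimes S(V)$, where the codomain is viewed as an $S(V)$-module via $\Delta$. A direct check on generators $x\in V$, using $\Delta(x)=x\otimes 1+1\otimes x$ and $\partial(x)=f(x)\cdot 1$, shows the three maps agree on $V$, hence on all of $S(V)$. For $\phi$, I introduce the auxiliary operator $\Phi=\sum_{n\geq 0}\lambda^n\partial^n$ (locally finite, since $\partial$ is locally nilpotent) and observe that the defining formula gives $\phi(u)=\Phi(u)-\varepsilon(\Phi(u))\cdot 1$. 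Iterating the $\partial$-identity yields $\Delta\circ\Phi=(\Phi\otimes \mathrm{Id})\circ\Delta$; combined with the counit axiom $(\varepsilon\otimes \mathrm{Id})\circ\Delta=\mathrm{Id}$, this produces $\sum\varepsilon(\Phi(u^{(1)}))\,u^{(2)}=\Phi(u)$, and a short computation then gives $\Delta\circ\phi(u)=(\phi\otimes \mathrm{Id})\circ\Delta(u)+1\otimes\phi(u)$.

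The preLie-type identity in part 1 is the main obstacle and requires genuine work. Since both sides are bilinear, I reduce to $b=y_1\cdots y_p$, $c=z_1\cdots z_q$ with $y_i,z_j\in V$, and compare the coefficient on both sides of a monomial indexed by subsets $(S_y,S_z)\in 2^{[p]}\times 2^{[q]}$ (the entries recording where $f$ has been applied). A direct expansion shows that the coefficient in $\partial\phi(b)\,\phi(c)$ is $\lambda^{|S_y|+|S_z|-1}|S_y|!\,|S_z|!$ (under the constraints $S_y\neq\emptyset$ and $S_z\neq[q]$), while the analogous coefficient in $\phi(\partial(b)\,\phi(c))$, once the hockey-stick summation $\sum_{u=0}^{t}\binom{s-1+u}{s-1}=\binom{s+t}{s}$ is invoked to collapse an inner sum, equals $\lambda^{|S_y|+|S_z|-1}(|S_y|+|S_z|)!$, with a boundary correction turning this into $\lambda^{|S_y|+|S_z|-1}[(|S_y|+|S_z|)!-|S_y|!\,|S_z|!]$ when $S_z=[q]$. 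Both expressions are symmetric in $(|S_y|,|S_z|)$, so the difference $\partial\phi(b)\phi(c)-\phi(\partial(b)\phi(c))$ is unchanged upon swapping $b$ and $c$.

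The main subtlety will be keeping the proper-subset constraints $I\subsetneq[k]$ in the definition of $\phi$ aligned with the domains of the combinatorial sums: the boundary cases $S_y=\emptyset$, $S_z=[q]$, and $(S_y,S_z)=([p],[q])$ each need separate verification, but each collapses once the hockey-stick identity has reduced the $\phi$-coefficient to a clean factorial. Once this bookkeeping is settled, the symmetry of the resulting expression in $(|S_y|,|S_z|)$ is transparent.
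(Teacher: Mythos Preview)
Your argument is correct. For the derivation property of $\partial$, the $\partial$-coproduct compatibility, and the main symmetry identity in part~1, your approach coincides with the paper's: direct monomial expansion, with the hockey-stick identity $\sum_{u=0}^{t}\binom{s-1+u}{s-1}=\binom{s+t}{s}$ collapsing the inner sum in $\phi(\partial(b)\phi(c))$ to $(|S_y|+|S_z|)!$, after which the symmetry in the pair of cardinalities is visible. The boundary bookkeeping you flag (the cases $S_y=\emptyset$, $S_z=[q]$, $(S_y,S_z)=([p],[q])$) is exactly what the paper handles by splitting off the extremal summation ranges and writing the difference as a manifestly symmetric expression $\varphi_1(v,w)-\varphi_2(v,w)$ plus two tails that swap under $v\leftrightarrow w$.

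The one place your route genuinely diverges is the $\phi$-coproduct identity. The paper proves $\Delta\circ\phi(u)=(\phi\otimes\mathrm{Id})\circ\Delta(u)+1\otimes\phi(u)$ by a direct three-index expansion on a monomial $u=x_1\cdots x_k$. You instead introduce $\Phi=\sum_{n\geq 0}\lambda^n\partial^n$, observe that $\phi=\Phi-(\varepsilon\circ\Phi)\cdot 1$, iterate the already-proved $\partial$-identity to get $\Delta\circ\Phi=(\Phi\otimes\mathrm{Id})\circ\Delta$, and then read off the cocycle identity from the counit axiom. This is cleaner and more conceptual: it replaces index manipulation by a one-line Hopf-algebraic argument, and it makes transparent why $\phi$ satisfies a 1-cocycle condition (in the sense used later in the paper) --- it is the augmentation-corrected version of an operator that genuinely commutes with $\Delta$ on the left. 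The paper's direct computation has the minor advantage of being self-contained at the level of explicit sums, but your approach is shorter and more reusable.
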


\begin{proof}
1. The fact that $\delta$ is a derivation is immediate. Let us prove the second assertion. We consider $v=x_1\ldots x_k$ and $w=y_1\ldots y_l$,
with $x_1,\ldots,x_k$ and $y_1,\ldots,y_l\in V$. Then
\begin{align*}
\partial \circ \phi(v)\phi(w)&=\sum_{\substack{I\subsetneq [k],\: i_0\notin I,\\ J\subsetneq [l]}}
\lambda^{|I|+|J|}|I|!|J|! \prod_{i\in I\sqcup\{i_0\}} f(x_i) \prod_{j\in J} f(y_j)\prod_{i\notin I\sqcup \{i_0\}} x_i\prod_{j\notin J}y_j\\
&=\sum_{\substack{\emptyset \subsetneq I\subseteq [k],\\ J\subsetneq [l]}}
\lambda^{|I|+|J|-1}|I|!|J|! \prod_{i\in I} f(x_i) \prod_{j\in J} f(y_j)\prod_{i\notin I} x_i\prod_{j\notin J}y_j\\
&=\underbrace{\sum_{\substack{\emptyset \subsetneq I\subsetneq [k],\\ \emptyset \subsetneq J\subsetneq [l]}}
\lambda^{|I|+|J|-1}|I|!|J|! \prod_{i\in I} f(x_i) \prod_{j\in J} f(y_j)\prod_{i\notin I} x_i\prod_{j\notin J}y_j}_{=\varphi_1(v,w)}\\
&+\sum_{\emptyset \subsetneq I\subsetneq [k]}
\lambda^{|I|-1}|I|! \prod_{i\in I} f(x_i) \prod_{i\notin I} x_i\prod_{j\in [l]}y_j\\
&+\sum_{\emptyset \subsetneq J\subsetneq [l]}
\lambda^{k+|J|-1}k!|J|! \prod_{i\in [k]} f(x_i) \prod_{j\in J} f(y_j)\prod_{j\notin J}y_j\\
&+\lambda^{k-1} k! \prod_{i\in [k]} f(x_i) \prod_{j\in [l]} y_j.
\end{align*}
Observe that $\varphi_1(v,w)=\varphi_1(w,v)$. 
\begin{align*}
&\phi(\partial(v)\phi(w))\\
&=\sum_{\substack{i_0\in [k],\: I\subseteq [k]\setminus\{i_0\}\\
J'\subsetneq [l],\: J''\subsetneq [l]\setminus J,\\ I\neq [k]\setminus \{i_0\}\mbox{\scriptsize{ or }}J''\neq [l]\setminus J}}
\lambda^{|I|+|J'|+|J''|} |J'|(|I|+|J''|)!\prod_{i\in I\sqcup \{i_0\}}f(x_i) \prod_{j\in J'\sqcup J''} f(y_j)
\prod_{i\notin I\sqcup \{i_0\}}x_i \prod_{j\notin J'\sqcup J''} y_j\\
&=\sum_{\substack{\emptyset\subsetneq I\subseteq [k],\\ J\subseteq [l]}}\lambda^{|I|+|J|-1}|I|\left(\sum_{\substack{J=J'\sqcup J'',\\ J'\neq [l]}}
|J'|!(|J''|+|I|-1)!\right)\prod_{i\in I} f(x_i) \prod_{j\in J} f(y_j) \prod_{i\notin I} x_i \prod_{j\notin J} y_j.
\end{align*}
For any $I\subseteq [k]$ and $J\subseteq [l]$,
\begin{align*}
|I|\left(\sum_{J=J'\sqcup J''}|J'|!(|J''|+|I|-1)!\right)&=
|I|\sum_{k=0}^{|J|}\binom{|J|}{k}k!(|I|+|J|-1)!\\
&=|I|!|J|!\sum_{k=0}^{|J|}\binom{k}{|I|-1}\\
&=|I|!|J|!\sum_{k=|I|-1}^{|I|+|J|-1}\binom{k}{|I|-1}\\
&=|I|!|J|!\binom{|I|+|J|}{|I|}\\
&=(|I|+|J|)!,
\end{align*}
and
\begin{align*}
|I|\left(\sum_{\substack{J=J'\sqcup J'',\\ J'\neq [l]}}|J'|!(|J''|+|I|-1)!\right)&=\begin{cases}
(|I|+|J|)! \mbox{ if }J\neq [l],\\
(|I|+|J|)!-|I|!|J|!\mbox{ if }J=[l].
\end{cases}
\end{align*}
This gives
\begin{align*}
\phi(\partial(v)\phi(w))&=\underbrace{\sum_{\substack{\emptyset\subsetneq I\subset [k],\\\emptyset\subsetneq J\subset [l],\\
I\neq [k]\mbox{\scriptsize{ or }}J\neq [l]}}
\lambda^{|I|+|J|-1}(|I|+|J|)!\prod_{i\in I}f(x_i)\prod_{j\in J}f(y_j) \prod_{i\notin I}x_i\prod_{j\notin J}y_j}_{=\varphi_2(v,w)}\\
&+\sum_{\emptyset \subsetneq I\subsetneq [k]}\lambda^{|I|-1}|I|! \prod_{i\in I} f(x_i) \prod_{i\notin I} x_i\prod_{j\in [l]}y_j\\
&+\lambda^{k-1} k! \prod_{i\in [k]} f(x_i) \prod_{j\in [l]} y_j\\
&-\sum_{\emptyset \subsetneq I\subsetneq [k]}\lambda^{l+|I|-1}l!|I|!\prod_{i\in I}f(x_i)\prod_{j\in [l]}f(y_j)\prod_{i\notin I}y_j.
\end{align*}
Note that $\varphi_2(v,w)=\varphi_2(w,v)$. Finally,
\begin{align*}
\partial \circ \phi(v)\phi(w)-\phi(\partial(v)\phi(w))&=\varphi_1(v,w)-\varphi_2(v,w)\\
&+\sum_{\emptyset \subsetneq J\subsetneq [l]}\lambda^{k+|J|-1}k!|J|! \prod_{i\in [k]} f(x_i) \prod_{j\in J} f(y_j)\prod_{j\notin J}y_j\\
&+\sum_{\emptyset \subsetneq I\subsetneq [k]}\lambda^{l+|I|-1}l!|I|!\prod_{i\in I}f(x_i)\prod_{j\in [l]}f(y_j)\prod_{i\notin I}y_j.
\end{align*}
This is symmetric in $v,w$.\\

2. Let us consider $A=\{u\in S(V)\mid \Delta\circ\partial u=(\partial \otimes \id_{S(V)})\circ \Delta(u)\}$.
As $\partial(1)=0$, $1\in A$. If $x\in V$,
\[\Delta\circ \partial(x)=f(x)1\otimes 1=\partial(x)\otimes 1+\partial(1)\otimes x=(\partial \otimes \id_{S(V)})\circ \Delta(x),\]
so $V\subseteq A$. Let $u,v\in A$.
\begin{align*}
\Delta\circ \partial(uv)&=\Delta(\partial(u)v+u\partial(v))\\
&=\partial(u^{(1)})v^{(1)}\otimes u^{(2)}v^{(2)}+u^{(1)}\partial(v^{(1)})\otimes u^{(2)}v^{(2)}\\
&=\partial(u^{(1)}v^{(1)})\otimes u^{(2)}v^{(2)}\\
&=(\partial \otimes \id_{S(V)})\circ \Delta(uv).
\end{align*}
We proved that $A$ is a subalgebra of $S(V)$ containing $V$, so $A=S(V)$.\\

We put
\[\tau:\left\{\begin{array}{rcl}
S(V)\otimes S(V)&\longrightarrow&S(V)\otimes S(V)\\
a\otimes b&\longmapsto&b\otimes a.
\end{array}\right.\]
As $\Delta$ is cocommutative,
\begin{align*}
\Delta\circ \partial&=\tau\circ \Delta \circ \partial\\
&=\tau \circ (\partial \otimes \id_{S(V)})\circ \Delta\\
&=(\id_{S(V)} \otimes \partial)\circ\tau \circ\Delta\\
&=(\id_{S(V)} \otimes \partial) \circ \Delta.
\end{align*}

Let $u=x_1\ldots x_k\in S(V)$. 
\begin{align*}
\Delta \circ \phi(u)&=\sum_{\substack{[k]=I\sqcup J\sqcup K,\\J\sqcup K\neq \emptyset}}\lambda^{|I|}|I|! \prod_{i\in I}f(x_i)
\prod_{j\in J} x_j \otimes \prod_{k\in K} x_k\\
&=\sum_{\substack{[k]=I\sqcup J\sqcup K,\\J\neq \emptyset}}\lambda^{|I|}|I|! \prod_{i\in I}f(x_i) \prod_{j\in J} x_j \otimes \prod_{k\in K} x_k
+\sum_{\substack{[k]=I\sqcup K,\\ K\neq \emptyset}}\lambda^{|I|}|I|! \prod_{i\in I}f(x_i) 1 \otimes \prod_{k\in K} x_k\\
&=\sum_{[k]=I\sqcup K} \phi\left(\prod_{i\in I} x_i\right)\otimes \prod_{k\in K} x_k+1\otimes \phi(u)\\
&=(\phi\otimes \id_{S(V)})\circ \Delta(u)+1\otimes \phi(u),\end{align*}
which ends this proof. \end{proof}

\subsection{Proof of Theorem \ref{theo2}}

To start with, observe that for any $u,v\in S(V)$,
\[u\bullet v=\partial(u)\phi(v).\]

1. We first prove the Leibniz identity. Let us take $u,v,w\in S(V)$. As $\partial$ is a derivation,
\begin{align*}
(uv)\bullet w&=\partial(uv)\phi(w)=\partial(u)v\phi(w)+u\partial(v)\phi(w)=(u\bullet w)v+u(v\bullet w).
\end{align*} 

2. Let us now prove the pre-Lie identity. If $u,v,w\in S(V)$,
\begin{align*}
(u\bullet v)\bullet w-u\bullet (v\bullet w)&=\partial(\partial(u)\phi(v))\phi(w)-\partial(u)\phi(\partial(v)\phi(w))\\
&=\partial^2(u)\phi(v)\phi(w)+\partial(u)(\partial\circ \phi(v)\phi(w)-\phi(\partial(v)\phi(w))).
\end{align*}
By Lemma \ref{lem3}, this is symmetric in $v,w$.\\

3. Let us finish by the compatibility with the coproduct. For any $u,v\in S(V)$, by Lemma \ref{lem3},
\begin{align*}
\Delta(u\bullet v)&=\Delta(\partial(u)\phi(v))\\
&=\Delta\circ \partial(u)(\phi(v^{(1)})\otimes v^{(2)}+1\otimes \phi(v))\\
&=\partial(u^{(1)})\phi(v^{(1)})\otimes u^{(2)}v^{(2)}+u^{(1)}\otimes \partial(u^{(2)})\phi(v)\\
&=u^{(1)}\bullet v^{(1)}\otimes u^{(2)}v^{(2)}+u^{(1)}\otimes u^{(2)}\bullet v.
\end{align*}
Hence, $S(V,f,\lambda)$ is indeed a Com-PreLie bialgebra.

\subsection{Isomorphisms}

\begin{prop}\label{prop4}
Let $V,W$ be two vector spaces, $f$ and $g$ be linear forms on respectively $V$ and $W$, and $\lambda,\mu\in \K$.
The Com-PreLie bialgebras $S(V,f,\lambda)$ and $S(W,g,\mu)$ are isomorphic if, and only if, one of the two following assertions holds:
\begin{enumerate}
\item $f=g=0$ and $\dim(V)=\dim(W)$.
\item  $\lambda=\mu$ and there exists a linear bijection $\psi:V\longrightarrow W$ such that $g\circ \psi=f$.
\end{enumerate}
\end{prop}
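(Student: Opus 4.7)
The ``if'' direction is direct. When $f=g=0$, the defining formula of Theorem~\ref{theo2} forces $x\bullet u=0$ for every $x\in V$ and $u\in S(V)$ (each term carries a factor $f(x)$), and by the Leibniz identity the whole product $\bullet$ is then zero on $S(V)$; the same holds on $S(W)$, so any linear bijection $V\to W$ extends to a Hopf algebra isomorphism that trivially intertwines the zero preLie products. When $\lambda=\mu$ and $g\circ\psi=f$ for some linear bijection $\psi:V\to W$, I extend $\psi$ to the unique algebra isomorphism $\Psi:S(V)\to S(W)$; it is automatically a coalgebra morphism since $\psi$ preserves primitives, and the identity $\Psi(x\bullet x_1\cdots x_k)=\psi(x)\bullet \psi(x_1)\cdots\psi(x_k)$ follows term by term from the formula, using $g\circ\psi=f$ and $\lambda=\mu$. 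The general case is reduced to $u\in V$ on the left via the Leibniz derivation rule $x_1\cdots x_k\bullet v=\sum_i x_1\cdots(x_i\bullet v)\cdots x_k$.

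For the ``only if'' direction, suppose $\Psi:S(V,f,\lambda)\to S(W,g,\mu)$ is a Com-PreLie bialgebra isomorphism. Since we work in characteristic zero, the primitive subspaces are $Prim(S(V))=V$ and $Prim(S(W))=W$, so $\Psi$ restricts to a linear bijection $\psi:V\to W$; in particular $\dim V=\dim W$, and $\Psi$ is the algebra extension of $\psi$. For $x,y\in V$ the formula specializes to $x\bullet y=f(x)\,y$, and applying $\Psi$ yields $f(x)\psi(y)=g(\psi(x))\psi(y)$; taking $y\neq 0$ gives $g\circ\psi=f$. If $f=0$, then $g=0$ and we are in case~1.

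Assume finally $f\neq 0$ and pick $x\in V$ with $f(x)\neq 0$. A direct computation with $k=2$ gives
\[x\bullet x^2=f(x)\,x^2+2\lambda f(x)^2\,x, \qquad \psi(x)\bullet\psi(x)^2=f(x)\,\psi(x)^2+2\mu f(x)^2\,\psi(x),\]
using $g(\psi(x))=f(x)$. Applying $\Psi$ to the first identity and comparing with the second, the degree-two parts coincide, while the degree-one parts force $(\lambda-\mu)f(x)^2\psi(x)=0$, whence $\lambda=\mu$ and we are in case~2. No step is really hard here: the essential input is the rigidity fact that a Hopf isomorphism between symmetric algebras is determined by a linear bijection on primitives, after which $f$ and $\lambda$ are read off from the preLie product in degrees one and two respectively.
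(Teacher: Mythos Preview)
Your proof is correct and follows essentially the same approach as the paper: restrict the isomorphism to primitives to obtain $\psi$, read off $g\circ\psi=f$ from the degree-one formula $x\bullet y=f(x)y$, and extract $\lambda=\mu$ from a degree-two computation. The paper writes the general $x\bullet yz$ before specializing to $x=y=z$ with $f(x)=1$, while you go directly to $x\bullet x^2$ with $f(x)\neq 0$; this is a cosmetic difference only.
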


\begin{proof} If the first assertion holds, then both pre-Lie products on $S(V,f,\lambda)$ and $S(W,g,\mu)$ are zero. Any linear isomorphism
between $V$ and $W$, extended as an algebra isomorphism from $S(V)$ to $S(W)$, is a Com-PreLie bialgebra isomorphism.\\

If the second assertion holds, the extension of $\psi$ as an algebra isomorphism from $S(V)$ to $S(W)$ is a Com-PreLie bialgebra isomorphism.\\

Let $\Psi:S(V,f,\lambda)\longrightarrow S(W,g,\mu)$ be an isomorphism. It is a bialgebra isomorphism,
so the restriction $\psi$ of $\Psi$ to $\prim(S(V))=V$ is a bijection to $\prim(S(W))=W$. As $\Psi$ is an algebra morphism,
it is the extension of $\psi$ as an algebra morphism from $S(V)$ to $S(W)$.

Let $x,y\in V$. Then
\begin{align*}
\Psi(x\bullet y)&=\Psi(f(x)y)=f(x)\psi(y)\\
=\Psi(x)\bullet \Psi(y)&=g\circ \psi(x) \psi(y).
\end{align*}
Choosing a nonzero $y$, this proves that $f=g\circ \psi$. As a consequence, $f=0$ if, and only if, $g=0$.

Let $x,y,z\in V$. Then
\begin{align*}
\Psi(x\bullet yz)&=f(x)\psi(y)\psi(z)+\lambda f(x)f(y)\psi(z)+\lambda f(x)f(z)\psi(y),\\
=\psi(x)\bullet \psi(y)\psi(z)&=f(x)\psi(y)\psi(z)+\mu f(x)f(y)\psi(z)+\mu f(x)f(z)\psi(y)
\end{align*}
If $f\neq 0$, let us choose $x=y=z$ such that $f(x)=1$. Then $2\lambda \psi(x)=2\mu \psi(x)$, so $\lambda=\mu$. \end{proof}

\begin{remark}
If $\bullet$ is the product of $S(V,f,\lambda)$ and $\mu \neq 0$,
the Com-PreLie bialgebra $(S(V),m,\mu \bullet,\Delta)$ is $S(V,\mu f,\lambda/\mu)$.
\end{remark}

\section{Examples on $\K[X]$}

Our aim in this section is to give all pre-Lie products on $\K[X]$, making it a graded Com-PreLie algebra.\\

\begin{notation}
Recall that $\K[X]$ is given a Zinbiel product $\prec$, defined by
\begin{align*}
&\forall i,j\geq 1,&X^i\prec X^j&=\frac{i}{i+j}X^{i+j}.
\end{align*}
The associated product is the usual product of $\K[X]$.
\end{notation}

We shall prove the following result:

\begin{theo}\label{theo5}
The following objects are Zinbiel-PreLie algebras:
\begin{enumerate}
\item Let $N\geq 1$, $\lambda,a,b \in \K$, $a\neq 0$, $b\notin \mathbb{Z}_-$. We put $\g^{(1)}(N,\lambda,a,b)=(\K[X],m,\bullet)$, with
\[X^i \bullet X^j=\begin{cases}
i\lambda X^i \mbox{ if }j=0,\\
a\dfrac{i}{\frac{j}{N}+b} X^{i+j}\mbox{ if }j\neq 0 \mbox{ and }N\mid j,\\
0\mbox{ otherwise}.
\end{cases}\]
\item Let $N\geq 1$, $\lambda,\mu \in \K$, $\mu \neq 0$. We put $\g^{(2)}(N,\lambda,\mu)=(\K[X],m,\bullet)$, with
\[X^i \bullet X^j=\begin{cases}
i\lambda X^i \mbox{ if }j=0,\\
i\mu X^{i+N} \mbox{ if }j=N,\\
0\mbox{ otherwise}.
\end{cases}\]
\item Let $N\geq 1$, $\lambda,\mu \in \K$, $\mu \neq 0$. We put $\g^{(3)}(N,\lambda,\mu)=(\K[X],m,\bullet)$, with
\[X^i \bullet X^j=\begin{cases}
i\lambda X^i \mbox{ if }j=0,\\
i\mu X^{i+j}  \mbox{ if }j\neq 0 \mbox{ and }N\mid j,\\
0\mbox{ otherwise}.
\end{cases}\]
\item Let $\lambda \in \K$. We put $\g^{(4)}(\lambda)=(\K[X],m,\bullet)$, with
\[X^i \bullet X^j=\begin{cases}
i\lambda X^i \mbox{ if }j=0,\\
0\mbox{ otherwise}.
\end{cases}\]
In particular, the pre-Lie product of $\g^{(4)}(0)$ is zero.
\end{enumerate}
Moreover, if  $\bullet$ is a product on $\K[X]$, such that $\g=(\K[X],m,\bullet)$ is a graded Com-PreLie algebra, Then $\g$ is one of the 
preceding examples. \end{theo}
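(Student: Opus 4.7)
The theorem asserts two things: that each of the four families $\g^{(1)}, \g^{(2)}, \g^{(3)}, \g^{(4)}$ is a Zinbiel-PreLie algebra, and that any graded Com-PreLie structure on $\K[X]$ must be one of them. The first part is direct verification; the main effort is the classification.

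\textbf{Part (A): verification.} For each family, I would substitute the explicit formula for $\bullet$ into the preLie identity and into the Zinbiel-PreLie Leibniz identity
\[(X^i \prec X^j) \bullet X^k = (X^i \bullet X^k) \prec X^j + X^i \cdot (X^j \prec X^k),\]
then check each resulting scalar identity in $i, j, k$ by splitting into cases according to which of $N \mid j$, $N \mid k$, $N \mid j+k$ hold. Since $X^i \bullet X^j$ is always a scalar multiple of $X^{i+j}$ and $X^i \prec X^j = \frac{i}{i+j} X^{i+j}$, each check reduces to an elementary rational identity.

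\textbf{Part (B): classification.}

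\emph{Step 1.} Homogeneity gives $X^i \bullet X^j = \alpha(i,j) X^{i+j}$, and $1 \bullet X^j = 0$ gives $\alpha(0,j) = 0$. Applying the Com-PreLie Leibniz identity to $X^i, X^{i'}, X^j$ yields $\alpha(i+i', j) = \alpha(i, j) + \alpha(i', j)$, hence $\alpha(i,j) = i \beta(j)$ for some $\beta : \N \to \K$. Set $\lambda = \beta(0)$.

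\emph{Step 2.} Substituting into the preLie identity and dividing by $i$ yields, for all $j, k \geq 1$,
\[(j - k) \beta(j) \beta(k) = \bigl(j \beta(k) - k \beta(j)\bigr) \beta(j + k). \qquad (\star)\]

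\emph{Step 3.} If $\beta$ vanishes on $\N^*$, then $\g = \g^{(4)}(\lambda)$. Otherwise let $N = \min\{j \geq 1 : \beta(j) \neq 0\}$.

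\emph{Step 4.} I show $\beta(j) = 0$ unless $N \mid j$. For $1 \leq k \leq N-1$ (so $\beta(k) = 0$), $(\star)$ collapses to $0 = -k \beta(j) \beta(j + k)$, forcing $\beta(j+k) = 0$ whenever $\beta(j) \neq 0$. For $j > N$ with $\beta(j) \neq 0$, $(\star)$ applied to $(j - N, N)$ forces $\beta(j - N) \neq 0$, since otherwise the equation would become $0 = (j - N)\beta(N)\beta(j) \neq 0$. Iterating this descent from any such $j$ must therefore terminate at a multiple of $N$; otherwise it lands in $\{1, \ldots, N-1\}$, contradicting the minimality of $N$.

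\emph{Step 5.} Setting $\tilde\beta(m) = \beta(Nm)$, equation $(\star)$ is preserved, so it suffices to analyze the case $N = 1$ with $\mu := \beta(1) \neq 0$.

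\emph{Step 6.} If $\beta(2) = 0$, then $(\star)$ with $k = 1$ reads $(j-1) \mu \beta(j) = (j \mu - \beta(j)) \beta(j+1)$; starting from $\beta(2) = 0$, induction yields $\beta(j) = 0$ for all $j \geq 2$. This gives $\g^{(2)}$ after undoing Step 5.

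\emph{Step 7.} If $\beta(2) \neq 0$, I first show $\beta(j) \neq 0$ for all $j \geq 1$: a minimal $j_0 \geq 3$ with $\beta(j_0) = 0$ would give, via $(\star)$ at $(1, j_0 - 1)$, $(2 - j_0)\mu \beta(j_0 - 1) = 0$, contradicting minimality. Set $\gamma(j) = j/\beta(j)$. Dividing $(\star)$ by $\beta(j)\beta(k)$ rewrites it as
\[\bigl(\gamma(j) - \gamma(k)\bigr)\, \beta(j + k) = j - k.\]
Fixing $n = j + k \geq 3$ and varying $j$ shows $\gamma(j+1) - \gamma(j) + \gamma(n-j) - \gamma(n-j-1) = 2/\beta(n)$. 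Writing $\delta(j) = \gamma(j+1) - \gamma(j)$ and differencing in $j$, this forces $\delta(j+1) - \delta(j) = \delta(n - j - 1) - \delta(n - j - 2)$, and letting $n$ vary forces $\delta(j+1) - \delta(j)$ to be a constant. Thus $\gamma$ is at most quadratic: $\gamma(j) = Aj^2 + Bj + C$. Substituting back into $(\star)$ yields $\beta(j+k) = 1/(A(j+k) + B)$, hence $\gamma(n) = An^2 + Bn$, i.e., $C = 0$. When $A = 0$, $\beta$ is constant and we obtain $\g^{(3)}$; when $A \neq 0$, $\beta(j) = a/(j + b)$ with $a = 1/A$ and $b = B/A$, and we obtain $\g^{(1)}$. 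The restriction $b \notin \Z_-$ is exactly the condition that $\beta$ be defined and nonzero on $\N^*$.

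\textbf{Main obstacle.} The delicate part is Step 7, showing $\gamma$ is at most quadratic. The naive recurrence $(\star)$ with $k = 1$ determines $\beta(j)$ from $\beta(2)$, but numerous compatibility relations for $j, k \geq 2$ remain to be verified; the symmetric differencing trick on $\gamma(j+1) - \gamma(j)$ at fixed $n = j + k$ encapsulates these all at once and is the cleanest route.
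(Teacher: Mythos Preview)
Your argument is correct, and Steps~1--6 match the paper almost exactly (the paper also reduces everything to the scalar relation $(j\lambda_k-k\lambda_j)\lambda_{j+k}=(j-k)\lambda_j\lambda_k$, then handles the cases $\lambda_{2N}=0$ and $\lambda_{2N}\neq 0$ separately). Two places differ. For Part~(A), the paper proves once and for all (Lemma~\ref{lem7}) that \emph{any} graded Com-PreLie product on $\K[X]$ of the form $X^i\bullet X^j=i\lambda_jX^{i+j}$ automatically satisfies the Zinbiel--Leibniz identity, so verifying the four families reduces to checking the single scalar relation above; this is cleaner than your case-by-case plan. (Incidentally, the Zinbiel--Leibniz identity you quote should read $(X^i\prec X^j)\bullet X^k=(X^i\bullet X^k)\prec X^j+X^i\prec(X^j\bullet X^k)$; the second term uses $\prec$, not $\cdot$.) For Step~7, the paper proceeds by direct induction: setting $\mu_k=\lambda_{kN}/\lambda_N$, the relation with $j=(k-1)N$, $k=N$ yields a first-order recurrence which is solved explicitly as $\mu_k=\mu_2/((k-1)-(k-2)\mu_2)$, and the split $\mu_2=1$ versus $\mu_2\neq 1$ immediately gives $\g^{(3)}$ and $\g^{(1)}$. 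Your substitution $\gamma(j)=j/\beta(j)$ and the differencing trick are an elegant alternative that identifies $\gamma$ as quadratic; it buys a more conceptual explanation of why the answer is a M\"obius-type transformation in $j$, at the cost of a slightly longer argument (and one should note that the equation $(\gamma(j)-\gamma(k))\beta(j+k)=j-k$ determines $\beta(n)$ directly only for $n\geq 3$, the case $n=2$ following from the quadratic form of $\gamma$).
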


\begin{remark} If $\lambda=\dfrac{a}{b}$, in $\g^{(1)}(N,\lambda,a,b)$,
\[X^i \bullet X^j=\begin{cases}
\dfrac{ai}{\frac{j}{N}+b} X^{i+j}\mbox{ if }N\mid j,\\
0\mbox{ otherwise}.
\end{cases}\]
We denote $\g^{(1)}(N,a,b)=\g^{(1)}\left(N,\dfrac{a}{b},a,b\right)$.
\end{remark}

\subsection{Graded pre-Lie products on $\K[X]$}

In this paragraph, we look for all graded pre-Lie products on $\K[X]$, making it a Com-PreLie algebra.
Let $\bullet$  be a homogeneous product on $\K[X]$, making it a graded Com-PreLie algebra.
For any $i,j\in \N$, there exists a scalar $\lambda_{i,j}$ such that
\[X^i\bullet X^j=\lambda_{i,j}X^{i+j}.\]
Moreover, for any $i,j,k \in \N$,
\begin{align*}
X^{i+j}\bullet X^k&=\lambda_{i+j,k}X^{i+j+k}\\
&=(X^iX^j)\bullet X^k\\
&=(X^i \bullet X^k)X^j+X^i(X^j \bullet X^k)\\
&=(\lambda_{i,k}+\lambda_{j,k})X^{i+j+k}.
\end{align*}
Hence, $\lambda_{i+j,k}=\lambda_{i,k}+\lambda_{j,k}$. Putting $\lambda_k=\lambda_{1,k}$ for any $k\in \N$, we obtain that
\[X^i\bullet X^j=i\lambda_j X^{i+j}.\]

\begin{lemma} \label{lem6}
For any $k\in \N$, let $\lambda_k \in \K$. We define a product $\bullet$ on $\K[X]$ by
\[X^i\bullet X^j=i\lambda_j X^{i+j}.\]
Then $(\K[X],m,\bullet)$ is Com-PreLie if, and only if, for any $j,k\geq 1$,
\[(j\lambda_k-k\lambda_j)\lambda_{j+k}=(j-k)\lambda_j\lambda_k.\]
\end{lemma}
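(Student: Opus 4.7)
The setup has already done most of the work: commutativity and associativity of $m$ are automatic, and the reasoning preceding the lemma shows that the Leibniz identity holds for any product of the given form $X^i\bullet X^j=i\lambda_j X^{i+j}$ (it is what forced $\lambda_{i,j}=i\lambda_j$ in the first place). Therefore, $(\K[X],m,\bullet)$ is a Com-PreLie algebra if and only if the preLie identity
\[(a\bullet b)\bullet c-a\bullet(b\bullet c)=(a\bullet c)\bullet b-a\bullet(c\bullet b)\]
holds, and by trilinearity it suffices to test it on monomials $a=X^i$, $b=X^j$, $c=X^k$.

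The plan is therefore a direct calculation. First I would compute
\[(X^i\bullet X^j)\bullet X^k=i(i+j)\lambda_j\lambda_k X^{i+j+k},\qquad X^i\bullet(X^j\bullet X^k)=ij\lambda_k\lambda_{j+k}X^{i+j+k},\]
and similarly with $j$ and $k$ exchanged. Subtracting, the preLie identity evaluated at $(X^i,X^j,X^k)$ becomes
\[i\bigl[(i+j)\lambda_j\lambda_k-j\lambda_k\lambda_{j+k}\bigr]X^{i+j+k}=i\bigl[(i+k)\lambda_k\lambda_j-k\lambda_j\lambda_{j+k}\bigr]X^{i+j+k}.\]
The $i^2\lambda_j\lambda_k$ terms cancel on both sides, so after dividing by $i$ (the case $i=0$ is trivial because $1\bullet x=0$, as recalled in the remark following Definition \ref{defi1}) and regrouping one is left with
\[(j-k)\lambda_j\lambda_k=(j\lambda_k-k\lambda_j)\lambda_{j+k}.\]

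Finally I would check that if $j=0$ or $k=0$ the resulting equality is a tautology (both sides reduce to the same multiple of $\lambda_0\lambda_\bullet$), so the nontrivial constraints are exactly those indexed by $j,k\geq 1$. This yields the stated equivalence. No step presents any real obstacle: the whole argument is a bookkeeping exercise in expanding the preLie defect on monomials and observing which monomial coefficient survives.
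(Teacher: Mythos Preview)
Your proposal is correct and follows essentially the same approach as the paper: both compute the preLie associator on monomials $X^i,X^j,X^k$, cancel the $i^2\lambda_j\lambda_k$ terms, reduce to the displayed condition, and then observe that the cases $j=0$ or $k=0$ are tautological. The only cosmetic difference is that the paper passes from the identity for all $i\geq 0$ to the one for $j,k$ by specializing $i=1$, whereas you divide by $i$ and handle $i=0$ via $1\bullet x=0$; the content is the same.
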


\begin{proof} Let $i,j,k\in \N$. Then
\[X^i\bullet(X^j \bullet X^k)-(X^i\bullet X^j)\bullet X^k=(ij \lambda_k \lambda_{j+k}-i(i+j)\lambda_j\lambda_k)X^{i+j+k}.\]
So $\bullet$ is pre-Lie if, and only if
\begin{align*}
&\forall i,j,k\in \N, ij \lambda_k \lambda_{j+k}-i(i+j)\lambda_j\lambda_k=ik \lambda_j \lambda_{j+k}-i(i+k)\lambda_j\lambda_k\\
&\Longleftrightarrow \forall j,k\in \N, (j\lambda_k-k\lambda_j)\lambda_{j+k}=(j-k)\lambda_j\lambda_k\\
&\Longleftrightarrow \forall j,k\geq 1, (j\lambda_k-k\lambda_j)\lambda_{j+k}=(j-k)\lambda_j\lambda_k,
\end{align*} 
as the identity is trivially satisfied if $j=0$ or $k=0$. \end{proof}

\begin{lemma}\label{lem7}
Let $\bullet$ be a product on $\K[X]$, making it a graded Com-PreLie algebra. Then $(\K[X],\prec,\bullet)$ is a Zinbiel-PreLie algebra.
\end{lemma}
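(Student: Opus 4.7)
The plan is to verify the three defining identities of a Zinbiel-PreLie algebra for $(\K[X],\prec,\bullet)$, understood as a structure on the augmentation ideal $\K[X]_+$ where $\prec$ is defined. The Zinbiel identity for $\prec$ is the classical half-shuffle identity and requires nothing to check, and the preLie identity for $\bullet$ is part of the hypothesis that $(\K[X],m,\bullet)$ is a graded Com-PreLie algebra. The only substantive step is the Leibniz-type compatibility relating $\prec$ and $\bullet$.

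For this step, the key input is the explicit form of $\bullet$ derived just before Lemma \ref{lem6}: the Com-PreLie Leibniz rule together with homogeneity forces
\[
X^i \bullet X^j = i\lambda_j X^{i+j}
\]
for some family of scalars $(\lambda_j)_{j \geq 0}$. Combined with $X^i \prec X^j = \frac{i}{i+j}X^{i+j}$, I would then verify the Zinbiel--PreLie compatibility
\[
(a \prec b)\bullet c = (a \bullet c)\prec b + a \prec (b \bullet c)
\]
on a monomial triple $(X^i,X^j,X^k)$ with $i,j \geq 1$, $k \geq 0$. All three terms collapse to scalar multiples of $X^{i+j+k}$, and the identity reduces to the rational identity
\[
i\lambda_k \cdot \frac{i+k}{i+j+k} + j\lambda_k \cdot \frac{i}{i+j+k} = i\lambda_k,
\]
which matches the left-hand side $(X^i \prec X^j)\bullet X^k = i\lambda_k X^{i+j+k}$ and is a one-line manipulation.

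The main point to emphasize, rather than an obstacle, is that this compatibility is essentially structural: the linear prefactor $i$ appearing in $X^i \bullet X^j$ mirrors the numerator of the Zinbiel coefficient $\frac{i}{i+j}$, so the identity is forced almost automatically once $\bullet$ is known to be of the form $X^i \bullet X^j = i\lambda_j X^{i+j}$. In particular, no constraint on the $\lambda_j$ coming from the preLie relation (cf.\ Lemma \ref{lem6}) needs to be invoked, so the conclusion applies to every graded Com-PreLie structure on $\K[X]$, and in particular to each of the four families $\g^{(1)},\ldots,\g^{(4)}$ appearing in Theorem \ref{theo5}.
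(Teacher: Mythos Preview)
Your proof is correct and follows essentially the same route as the paper: both reduce the Zinbiel--PreLie Leibniz compatibility to a direct computation on monomials using the explicit form $X^i\bullet X^j=i\lambda_j X^{i+j}$ derived from gradedness and the Com-PreLie Leibniz identity, and both observe that the resulting scalar identity $\frac{i(i+k)}{i+j+k}+\frac{ij}{i+j+k}=i$ holds independently of any preLie constraint on the $\lambda_j$. The only cosmetic difference is that the paper allows $(i,j)\neq(0,0)$ via the extension of $\prec$ noted after Definition~\ref{defi1}, whereas you work with $i,j\geq 1$; this changes nothing of substance.
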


\begin{proof} Let us take $i,j,k\in \N$, $(i,j)\neq(0,0)$. Then
\begin{align*}
(X^i\bullet X^k)\prec X^j+X^i \prec (X^j \bullet X^k)&=\lambda_k(i X^{i+k}\prec X^j+j X^i \prec X^{j+k})\\
&=\lambda_k\left(\frac{i(i+k)}{i+j+k}+\frac{ij}{i+j+k}\right)X^{i+j+k}\\
&=i\lambda_kX^{i+j+k}\\
&=(i+j)\lambda_k \frac{i}{i+j} X^{i+j+k},\\
(X^i\prec X^j)\bullet X^k&=\frac{i}{i+j}X^{i+j}\bullet X^k\\
&=\frac{i}{i+j}(i+j)\lambda_k X^{i+j+k}\\
&=i\lambda_k X^{i+j+k}.
\end{align*}
So $\K[X]$ is Zinbiel-PreLie. \end{proof}

\begin{proof} (Theorem \ref{theo5}-1). Let us first prove that the objects defined in Theorem \ref{theo5} are indeed Zinbiel-PreLie algebras.
By Lemma \ref{lem7}, it is enough to prove that they are Com-PreLie algebras. We shall use Lemma \ref{lem6} in all cases. \\

1. For any $j\geq 1$, $\lambda_j=a\frac{1}{\frac{j}{N}+b}$ if $N\mid j$ and $0$ otherwise. If $j$ or $k$ is not a multiple of $N$, then
\[(j\lambda_k-k\lambda_j)\lambda_{j+k}=(j-k)\lambda_j\lambda_k=0.\]
If $j=Nj'$ and $k=Nk'$, with $j',k'\in \N$, then
\begin{align*}
(j\lambda_k-k\lambda_j)\lambda_{j+k}&=Na^2\left(\frac{j'}{k'+b}-\frac{k'}{j'+b}\right)\frac{1}{j'+k'+b}\\
&=Na^2\frac{j'^2-k'^2+b(j'-k')}{(j'+b)(k'+b)(j'+k'+b)}\\
&=Na^2(j'-k')\frac{j'+k'+b}{(j'+b)(k'+b)(j'+k'+b)}\\
&a^2(j-k)\frac{1}{(j'+b)(k'+b)}\\
&=(j-k)\lambda_j\lambda_k.
\end{align*}

2. In this case, $\lambda_j=\mu$ if $j=N$ and $0$ otherwise. Hence, for any $j,k \geq 1$,
\begin{align*}
(j\lambda_k-k\lambda_j)\lambda_{j+k}&=\mu^2(j\delta_{k,N}-k\delta_{j,N})\delta_{j+k,N}=0,\\
(j-k)\lambda_j\lambda_k&=\mu^2(j-k) \delta_{j,N} \delta_{k,N}=0.
\end{align*}

3. Here, for any $j\geq 1$, $\lambda_j=\mu$ if $N\mid j$ and $0$ otherwise. Then
\begin{align*}
(j\lambda_k-k\lambda_j)\lambda_{j+k}&=\begin{cases}
\mu^2(j-k) \mbox{ if }N\mid j,k,\\
0\mbox{ otherwise};
\end{cases}&
(j-k)\lambda_j\lambda_k&=\begin{cases}
\mu^2(j-k) \mbox{ if }N\mid j,k,\\
0\mbox{ otherwise}.
\end{cases}\end{align*}

4. In this case, for any $j\geq 1$, $\lambda_j=0$ and the result is trivial. \end{proof}

\subsection{Classification of graded pre-Lie products on $\K[X]$}

We now prove that the preceding examples cover all the possible cases.\\

\begin{proof} (Theorem \ref{theo5}-2).  We put $X^i \bullet X^j=i\lambda_j X^{i+j}$ for any $i,j \in \N$ and we put $\lambda=\lambda_0$. 
If for any $j\geq 1$, $\lambda_j=0$, then $\g=\g^{(4)}(\lambda)$. If this is not the case, we put
\[N=\min\{j\geq 1\mid \lambda_j \neq 0\}.\]

\textit{First step.} Let us prove that if $i$ is not a multiple of $N$, then $\lambda_i=0$. If $i$ is not a multiple of $N$,
we put $i=qN+r$, with $0<r<N$, and we proceed by induction on $q$. If $q=0$, by definition of $N$,
$\lambda_1=\ldots=\lambda_{N-1}=0$. Let us assume the result at rank $q-1$, with $q>0$. 
We put $j=i-N$ and $k=N$. By the induction hypothesis, $\lambda_j=0$. Then, by Lemma \ref{lem6},
\[(i-N) \lambda_N\lambda_i=0.\]
As $i\neq N$ and $\lambda_N \neq 0$, $\lambda_i=0$. It is now enough to determine $\lambda_{iN}$ for any $i\geq 1$. \\

\textit{Second step.} Let us assume that $\lambda_{2N}=0$. Let us prove that $\lambda_{iN}=0$ for any $i\geq 2$, by induction on $i$.
This is obvious if $i=2$. Let us assume the result at rank $i-1$, with $i\geq 3$, and let us prove it at rank $i$. 
We put $j=(i-1)N$ and $k=N$. By the induction hypothesis, $\lambda_j=0$. Then, by Lemma \ref{lem6},
\[(i-2)N\lambda_N \lambda_iN=0.\]
As $i\geq 3$ and $\lambda_N \neq 0$, $\lambda_{iN}=0$. As a conclusion, if $\lambda_{2N}=0$, putting $\mu=\lambda_N$, $\g=\g^{(2)}(N,\lambda,\mu)$. \\

\textit{Third step.} We now assume that $\lambda_{2N} \neq 0$. We first prove that $\lambda_{iN}\neq 0$ for any $i\geq 1$.
This is obvious if $i=1,2$. Let us assume the result at rank $i-1$, with $i\geq 3$, and let us prove it at rank $i$. 
We put $j=(i-1)N$ and $k=N$.  Then, by Lemma \ref{lem6},
\[(j\lambda_N-N\lambda_j)\lambda_{iN}=(i-2)N\lambda_j\lambda_N.\]
By the induction hypothesis, $\lambda_j\neq 0$. Moreover, $i>2$ and $\lambda_N\neq 0$, so $\lambda_{iN} \neq 0$.\\

For any $j\geq 1$, we put $\mu_j=\dfrac{\lambda_{kN}}{\lambda_N}$: this is a nonzero scalar, and $\mu_1=1$. 
Let us prove inductively that
\begin{align*}
\mu_k&=\frac{\mu_2}{(k-1)-(k-2)\mu_2},&\mu_2&\neq \frac{k-1}{k-2} \mbox{ if }k\neq 2.
\end{align*}
If $k=1$, $\mu_1=1=\dfrac{\mu_2}{0-(-1)\mu_2}$, and $\mu_2\neq 0$ as $\lambda_{2N}\neq 0$.
If $k=2$, $\mu_2=\dfrac{\mu_2}{1-0\mu_2}$. Let us assume the result at rank $k-1$, with $k\geq 3$. 
By Lemma \ref{lem6}, with $j=(k-1)N$ and $k=N$,
\begin{align*}
((k-1)N \lambda_N-\lambda_N\mu_{k-1})\lambda_N\mu_k&=(k-2)N \mu_{k-1}\mu_1\lambda_N^2,\\
\mu_k((k-1)-\mu_{k-1})&=(k-2)\mu_{k-1}.
\end{align*}
Moreover, by the induction hypothesis,
\begin{align*}
(k-1)-\mu_{k-1}&=k-1-\frac{\mu_2}{(k-2)-(k-3)\mu_2}\\
&=\frac{(k-1)(k-2)-((k-1)(k-3)+1)\mu_2}{(k-2)-(k-3)\mu_2}\\
&=(k-2)\frac{(k-1)-(k-2)\mu_2}{(k-2)-(k-3)\mu_2}.
\end{align*}
As $\mu_{k-1}\neq 0$ and $k>2$, this is nonzero, so $\mu_2\neq \dfrac{k-1}{k-2}$. We finally obtain that
\[\mu_k=(k-2)\mu_{k-1}\frac{1}{k-2}\frac{(k-2)-(k-3)\mu_2}{(k-1)-(k-2)\mu_2}=\frac{\mu_2}{(k-1)-(k-2)\mu_2}.\]
Finally, for any $k\geq 1$,
\[\lambda_{kN}=\frac{a\mu_2}{(k-1)-(k-2)\mu_2}=\frac{\lambda_N\mu_2}{(1-\mu_2)k+2\mu_2-1}.\]

\textit{Last step.} If $\mu_2=1$, then for any $k\geq 1$, $\lambda_{kN}=\lambda_N$: this is $\g^{(3)}(N,\lambda,\lambda_N)$. If $\mu_2\neq 1$,
we put  $b=\dfrac{2\mu_2-1}{1-\mu_2}$. As $\mu_2\neq 0$, $b\neq -1$. As for any $k\geq 3$, $\mu_2\neq \dfrac{k-1}{k-2}$,
$b\neq -k$, and $b\neq -2$, so $b\notin \mathbb{Z}_-$. Moreover, for any $k\geq 1$,
\[\lambda_{kN}=\dfrac{\dfrac{\lambda_N\mu_2}{1-\mu_2}}{k+b}.\]
We take $a=\dfrac{\lambda_N\mu_2}{1-\mu_2}$, and we obtain $\g^{(1)}(N,\lambda,a,b)$. \end{proof}

\begin{prop}\label{prop8}
Among the examples of Theorem \ref{theo5}, the Com-PreLie bialgebras (or equivalently the Zinbiel-PreLie bialgebras) are 
$g^{(4)}(0)$ and $\g^{(1)}(1,a,1)$, with $a\neq 0$. 
\end{prop}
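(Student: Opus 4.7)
The plan is to test the Com-PreLie compatibility $\Delta(a\bullet b)=a^{(1)}\otimes a^{(2)}\bullet b+a^{(1)}\bullet b^{(1)}\otimes a^{(2)}\cdot b^{(2)}$ on the generator $a=X$, extract strong constraints on the sequence $\lambda_j$ (where $X^i\bullet X^j=i\lambda_j X^{i+j}$), match these constraints against the four families, and then verify the converse via a multiplicativity argument in $a$.

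First step: with $a=X$ and $b=X^j$, using $\Delta(X^n)=\sum_k\binom{n}{k}X^k\otimes X^{n-k}$ and the observation that $1\bullet X^l=0$ for all $l$ (since $X^i\bullet X^j=i\lambda_j X^{i+j}$ vanishes at $i=0$), the two sides of the compatibility expand as sums of tensors $X^k\otimes X^{j+1-k}$. Identifying coefficients for $1\leq k\leq j$ yields, after simplification of binomials, the identity $k\lambda_{k-1}=(j+1)\lambda_j$. Taking $k=1$ gives the necessary condition
\[\lambda_j=\frac{\lambda_0}{j+1}\quad\text{for all }j\geq 0.\]

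Second step: I confront this constraint with each family of Theorem \ref{theo5}. In $\g^{(2)}(N,\lambda,\mu)$ the sequence $\lambda_j$ is supported only at $j=N$, and in $\g^{(3)}(N,\lambda,\mu)$ it is constant on nonzero multiples of $N$; neither shape matches the strictly decreasing profile $\lambda_0/(j+1)$ unless $\mu=0$, contradicting $\mu\neq 0$. In $\g^{(4)}(\lambda)$ one has $\lambda_j=0$ for $j\geq 1$, forcing $\lambda_0=0$ and leaving only $\g^{(4)}(0)$. In $\g^{(1)}(N,\lambda,a,b)$, the vanishing of $\lambda_j$ for $N\nmid j$ combined with $\lambda_j=\lambda_0/(j+1)$ forces $N=1$; then the identity $a/(j+b)=\lambda_0/(j+1)$ must hold for every $j\geq 1$, which imposes $b=1$ and $\lambda_0=a$, i.e.\ $\lambda=a/b=a$, producing exactly $\g^{(1)}(1,a,1)$.

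For the converse, $\g^{(4)}(0)$ is trivial. For $\g^{(1)}(1,a,1)$, the main observation is that the compatibility relation is multiplicative in the first argument: using $(a_1a_2)\bullet b=(a_1\bullet b)a_2+a_1(a_2\bullet b)$ and the fact that $\Delta$ is an algebra morphism, a short Sweedler-notation manipulation shows that if the compatibility holds for $(a_1,b)$ and $(a_2,b)$ then it holds for $(a_1a_2,b)$; it also holds trivially for $a=1$. Since $\K[X]$ is generated by $X$ as an algebra, it suffices to verify the compatibility for $a=X$ and $b=X^j$, which is exactly the content of the first step when $\lambda_j=a/(j+1)$. Thus $\g^{(1)}(1,a,1)$ really is a Com-PreLie bialgebra. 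The main obstacle is the coefficient-matching computation of the first step, which produces the rigid closed-form expression for $\lambda_j$ and eliminates most candidates in a single blow.
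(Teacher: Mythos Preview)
Your proof is correct and follows essentially the same approach as the paper: both test the coproduct compatibility on $a=X$, $b=X^j$ to constrain the sequence $(\lambda_j)$, and both use the ``subalgebra'' argument (multiplicativity of the compatibility in the first argument) for the converse. Your version is slightly more streamlined in the necessary direction: the paper treats $j=1$, $j=N$, and $j=2$ in turn to obtain $\lambda_0=2\lambda_1$, then $N=1$, then $3\lambda_2=2\lambda_1$, whereas you extract the closed form $\lambda_j=\lambda_0/(j+1)$ in one stroke and match it against the four families directly.
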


\begin{proof}  Note that $\g^{(1)}(1,0,1)=\g^{(4)}(0)$. 
Let us first prove that $\g(1,a,1)$ is a Zinbiel-PreLie bialgebra for any $a\in \K$.
By Remark \ref{rk1.1}, first point, it is enough to consider $\g(1,1,1)$. We put
\[A=\{x\in \g(1,1,1)\mid \forall y\in \g(1,1,1),\:\Delta(x\bullet y)=x^{(1)}\otimes x^{(2)}\bullet y+x^{(1)}\bullet y^{(1)}\otimes x^{(2)}y^{(2)}\}.\]
Firstly, $1\in A$: for any $y\in \g(1,1,1)$,
\begin{align*}
\Delta(1\bullet y)&=0=1\otimes 1\bullet y+1\bullet y^{(1)}\otimes 1y^{(2)}.
\end{align*}
Let $x_1,x_2\in A$. For any $y\in \g(1,1,1)$, by the Leibniz identity,
\begin{align*}
\Delta((x_1x_2)\bullet y)&=\Delta(x_1\bullet y)\Delta(x_2)+\Delta(x_1)\Delta(x_2\bullet y)\\
&=x_1^{(1)}x_2^{(1)}\otimes (x_1^{(2)}\bullet y)x_2^{(2)}+(x_1^{(1)}\bullet y^{(1)})x_2^{(1)}\otimes x_1^{(2)}x_2^{(2)}y^{(2)}\\
&+x_1^{(1)}x_2^{(1)}\otimes x_1^{(2)}(x_2^{(2)}\bullet y)+x_1^{(1)}(x_2^{(1)}\bullet y^{(1)})\otimes x_1^{(2)}x_2^{(2)}y^{(2)}\\
&=x_1^{(1)}x_2^{(1)}\otimes (x_1^{(2)}x_2^{(2)})\bullet y+(x_1^{(1)}x_2^{(1)})\bullet y^{(1)}\otimes x_1^{(2)}x_2^{(2)}y^{(2)}\\
&=(x_1x_2)^{(1)}\otimes (x_1x_2)^{(2)}\bullet y+(x_1x_2)^{(1)}bullet y^{(1)}\otimes (x_1x_2)^{(2)}y^{(2)}.
\end{align*}
So $x_1x_2\in A$: $A$ is a subalgebra of $\K[X]$. Hence, it is enough to prove that $X\in A$. Let $n\in \N$, let us consider $y=X^n$.
\begin{align*}
\Delta(X\bullet y)&=\frac{1}{1+n}\Delta(X^{n+1})\\
&=\sum_{k=0}^{n+1}\frac{n!}{k!(n+1-k)!} X^k\otimes X^{n+1-k};\\
X^{(1)}\bullet y^{(1)}\otimes X^{(2)}y^{(2)}&=X\bullet y^{(1)}\otimes y^{(2)}+0\\
&=\sum_{k=0}^n\frac{n!}{(k+1)!(n-k)}X^{k+1}\otimes X^{n-k}\\
&=\sum_{k=1}^{n+1}\frac{n!}{k!(n+1-k)!}X^k\otimes X^{n+1-k};\\
X^{(1)}\otimes X^{(2)}\bullet y&=1\otimes X\bullet y+0\\
&=\frac{n!}{0!(n+1-0)!}X^0\otimes X^{n+1-0}.
\end{align*}
This proves that $X\in A$, so $\g(1,1,1)$ is a Zinbiel-PreLie bialgebra.\\

Let $\g$ be one of the examples of Theorem \ref{theo5}.  Firstly,
\begin{align*}
\Delta(X\bullet X)&=X\otimes 1\bullet X+1\otimes X\bullet X\\
&+X\bullet X\otimes 1+X\bullet 1\otimes X+1\bullet X\otimes X+1\bullet 1\otimes X^2\\
\lambda_1(1\otimes X^2+2X\otimes X+X^2\otimes 1)&=\lambda_11\otimes X^2+\lambda X \otimes X+\lambda_1 X^2\otimes 1.
\end{align*}
This gives $\lambda_0=2\lambda_1$. In particular, if $\g=\g^{(4)}(\lambda)$, then $\lambda=2\lambda_1=0$: this is $\g^{(4)}(0)$.
In the other cases, $N$ exists. By definition of $N$, $X\bullet X^k=0$ if $1\leq k \leq N-1$. We obtain that
\begin{align*}
\Delta(X\bullet X^N)&=1\otimes X\bullet X^N+X \otimes 1\bullet X^N
+\sum_{k=0}^N \binom{N}{k} (X\bullet X^k \otimes X^{N-k}+1\bullet X^k \otimes X^{n-k+1})\\
\lambda_N \Delta(X^{N+1})&=1\otimes X\bullet X^N+\lambda X \otimes X^N+1\otimes X\bullet X^N.
\end{align*}
If $\lambda=0$, we obtain that $X^{N+1}$ is primitive, as $\lambda_N=0$, so $N+1=1$: absurd, $N\geq 1$.
So $\lambda \neq 0$,. The cocommutativity of $\Delta$ implies that $N=1$.
\begin{align*}
\Delta(X\bullet X^2)&=\lambda_2(X^3\otimes 1+3X^2\otimes X+3X\otimes X^2+1\otimes X^3)\\
&=1\otimes X\bullet X^2+2\lambda_1 X^2\otimes X+\lambda_0X\otimes X^2+1\otimes X\bullet X^2.
\end{align*}
Hence, $3\lambda_2=2\lambda_1$. 
\begin{itemize}
\item If $\g=\g^{(3)}(1,\lambda,\mu)$, we obtain $3\mu=2\mu$, so $\mu=0$: contradiction.
\item If $\g=\g^{(2)}(1,\lambda,\mu)$, we obtain $0=2\mu$, so $\mu=0$: contradiction.
\end{itemize}
So $\g=\g^{(1)}(1,\lambda,a,b)$. We obtain that
\[3\frac{a}{2+b}=2\frac{a}{1+b},\]
so $b=1$. Then $\lambda_0=2\lambda_1=\dfrac{2a}{2}=a=\dfrac{a}{b}$, so $\g=\g^{(1)}(1,a,1)$.\end{proof}

\subsection{Underlying Lie algebras}

We aim in this paragraph to describe the underlying Lie algebras of the pre-Lie algebras of Theorem \ref{theo5}. 
Let us first recall the construction of of the semi-direct sum of two Lie algebras. Let $\g$, $\mathfrak{h}$ be two Lie algebras 
and let $\tau:\mathfrak{h}\longrightarrow \der(\g)^{op}$
be a Lie algebra morphism, where $\der(\mathfrak{h})^{op}$ is the opposite of the Lie algebra of derivations of the Lie algebra $\mathfrak{h}$. 
Then $\g\oplus \h$ is given a Lie bracket by the following: if $x,x'\in \g$, $y,y'\in\mathfrak{h}$,
\begin{align*}
&\forall x,x'\in \g,\: \forall y,y'\in \mathfrak{h},&[x+y,x'+y']&=[x,x']_\g-\tau(y)(x')+\tau(y')(x)+[y,y']_{\mathfrak{h}}.
\end{align*}
This Lie algebra is denoted by $\g \oplus_\tau h$. Here are the examples we shall use in the sequel:
\begin{enumerate}
\item Let $\g$ be a graded pre-Lie algebra. Then the abelian Lie algebra $\K$ acts on $\g$ by derivation:
if $x\in \g$ is homogeneous of degree $n$, then $\tau(1)(x)=n x$. The associated semi-direct sum is denoted by $\g\oplus_{\deg} \K$.
\item Let $\g$ be a Lie algebra and let $(m,\cdot)$ be a right $\g$-module.
Considering $m$ as an abelian Lie algebra, we obtain a semi-direct product $m\oplus_\tau g$. For any $x,x'\in m$, $y,y'\in \g$,
\[[x+y,x'+y']=x\cdot y'-x'\cdot y+[y,y'].\]
\end{enumerate}

\begin{notation}
We shall use the Faà di Bruno Lie algebra $\g_{FdB}$: as a vector space it has a basis $(e_i)_{i\geq 1}$, and its Lie bracket is given by
\begin{align*}
&\forall k,l\geq 1,& [e_k,e_l]&=(k-l)e_{k+l}.
\end{align*}
This is the Lie algebra of the group of formal diffeomorphisms $\{x+a_1x^2+\ldots\}\subseteq \K[[x]]$, with the composition of formal series.
For any $\lambda \in \K$, the right $\g_{FdB}$-module $V_\lambda$ has a basis $(f_k)_{k\in \N}$ and
\begin{align*}
&\forall k,l\geq 1,&f_k\cdot e_l=(k+\lambda) f_{k+l}.
\end{align*}
\end{notation}

Any $\g$ described in Theorem \ref{theo5} can be decomposed into a semi-direct sum $\g_+\oplus \g_0$,
where $\g_0=\vect(1)$ and $\g_+=\vect(X^k,k\geq 1)$. The action of $\g_0$ over $\g_+$ is given by the product $\bullet$.
As a consequence, if $\lambda=0$, this is a trivial action and $\g$ is isomorphic to $\g_+\oplus \K$.
Otherwise, $\g$ is isomorphic to $\g_+\oplus_{\deg}\K$. Let us now describe $\g_+$.

\begin{prop}\label{prop9}
Let $\g$ be one of the Com-PreLie algebras of Theorem \ref{theo5} and let $\g_+$ its augmentation ideal.
\begin{enumerate}
\item If $\g=\g^{(1)}(N,\lambda,a,b)$ or $\g^{(3)}(N,\lambda,\mu)$ then, as a Lie algebra,
\[\g \approx \left(V_{\frac{1}{N}}\oplus \ldots \oplus V_{\frac{N-1}{N}}\right)\oplus_\tau \g_{FdB}.\]
\item If $\g=\g^{(2)}(N,\lambda,\mu)$, let us put $\g_1=\vect(f_k,1\leq k\neq N)$. Its is an abelian Lie algebra. Let $\tau$ be the action of $\K$ on $\g_1$
given by $k\cdot 1=f_{k+N}$ for any $k$. Then, as a Lie algebra,
\[\g_+\approx \g_1\oplus_\tau \g_2.\]
\item If $\g=\g^{(4)}(\lambda)$, then $\g_+$ is abelian.
\end{enumerate}\end{prop}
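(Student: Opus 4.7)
The plan is to unwind the definition of the bracket and match structures. In every case of Theorem \ref{theo5},
\[
[X^i, X^j] = X^i\bullet X^j - X^j\bullet X^i = (i\lambda_j - j\lambda_i)\,X^{i+j},
\]
with $\lambda_j$ as in the proof of Theorem \ref{theo5}. Since $X^k\bullet 1 = k\lambda X^k$, the element $1$ acts on $\g_+$ as $\lambda$ times the degree derivation; this accounts for the outer $\oplus_{deg}\K$ mentioned just before Proposition \ref{prop9} and reduces the problem to identifying $\g_+$ as a Lie algebra in each case.

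I would first dispose of the two easy cases. For $\g^{(4)}(\lambda)$, all $\lambda_j$ with $j\geq 1$ vanish, so $\g_+$ is abelian, proving (3). For $\g^{(2)}(N,\lambda,\mu)$, only $\lambda_N=\mu$ is nonzero among the $\lambda_j$, $j\geq 1$, so $[X^i,X^j]$ vanishes unless exactly one of $i,j$ equals $N$; setting $\g_2=\K X^N$ and $\g_1=Vect(X^k:k\geq 1,\,k\neq N)$, one finds that $\g_1$ is an abelian ideal and $X^N$ acts on $\g_1$ by the shift $X^k\mapsto k\mu X^{k+N}$. Identifying $X^N$ with $1\in\K$ (and absorbing the factor $\mu$ into the identification of the $f_k$ with the $X^k$), this is exactly the semi-direct product $\g_1\oplus_\tau\K$ of the statement.

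For the main case $\g=\g^{(1)}(N,\lambda,a,b)$, the idea is to decompose $\g_+=W_0\oplus W_1\oplus\cdots\oplus W_{N-1}$, where $W_r=Vect(X^{Nk+r}: Nk+r\geq 1)$, and to exploit that $\lambda_j$ vanishes off multiples of $N$. This immediately gives $[X^{Nk+r},X^{Nl+s}]=0$ whenever $r,s$ are both nonzero, so the $W_r$, $r\neq 0$, commute pairwise. In $W_0$ one computes
\[
[X^{Nk},X^{Nl}] = Na\,\frac{k(k+b)-l(l+b)}{(k+b)(l+b)}\,X^{N(k+l)} = \frac{Na(k-l)(k+l+b)}{(k+b)(l+b)}\,X^{N(k+l)},
\]
and the factorization $k(k+b)-l(l+b)=(k-l)(k+l+b)$ points to the rescaling $e_k:=\frac{k+b}{Na}X^{Nk}$, which one checks satisfies $[e_k,e_l]=(k-l)e_{k+l}$, giving $W_0\cong\g_{FdB}$. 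For $r\in\{1,\ldots,N-1\}$ the analogous computation yields $[X^{Nk+r},X^{Nl}]=\frac{(Nk+r)a}{l+b}X^{N(k+l)+r}$, so with $f_k:=X^{Nk+r}$ one gets $[f_k,e_l]=\frac{Nk+r}{N}f_{k+l}=(k+r/N)f_{k+l}$, which is the defining relation of the right $\g_{FdB}$-module $V_{r/N}$. The case $\g^{(3)}(N,\lambda,\mu)$ is handled identically: $[X^{Nk},X^{Nl}]=N\mu(k-l)X^{N(k+l)}$ forces the rescaling $e_k=X^{Nk}/(N\mu)$ to recover $\g_{FdB}$, and $[X^{Nk+r},X^{Nl}]=(Nk+r)\mu X^{N(k+l)+r}$ gives $V_{r/N}$ with $f_k=X^{Nk+r}$.

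The only real obstacle is locating the correct rescaling that produces the Fa\`a di Bruno bracket; once the factorization $k(k+b)-l(l+b)=(k-l)(k+l+b)$ is spotted in the $\g^{(1)}$ computation, the normalization is forced and every remaining step is bookkeeping on the support of the $\lambda_j$.
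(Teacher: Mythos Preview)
Your argument is correct and essentially identical to the paper's: you decompose $\g_+$ according to residues modulo $N$, identify the multiples-of-$N$ piece with $\g_{FdB}$ via the rescaling $e_k=\frac{k+b}{Na}X^{Nk}$ (respectively $e_k=\frac{1}{N\mu}X^{Nk}$ for $\g^{(3)}$), and read off the $V_{r/N}$-module structure on each remaining residue class from the bracket $[X^{Nk+r},e_l]=(k+r/N)X^{N(k+l)+r}$. The paper treats cases (2) and (3) as immediate and carries out case (1) exactly as you do, down to the same factorization $k(k+b)-l(l+b)=(k-l)(k+l+b)$ and the same choice of $e_k$.
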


\begin{proof} The cases 2 and 3 are immediate. Let us consider the case $\g=\g^{(1)}(N,\lambda,a,b)$. 
We put $\g_0=\vect(X^{Nk},k\geq 1)$ and for any $i\in [N-1]$, $\g_i=\vect(X^{Nk+i},k\geq 1)$. For any $k\geq 1$, we put $e_k=\frac{k+b}{Na}X^k$. Then $(e_k)_{k\geq 1}$ is a basis of $\g_0$ and, for any $k,l\geq 1$,
\begin{align*}
[e_k,e_l]&=\frac{(k+b)(l+b)}{N^2a^2}(X^{kN}\bullet X^{lN}-X^{lN}\bullet X^{kN})\\
&=\frac{(k+b)(l+b)}{N^2a^2}a\left(\frac{kN}{l+b}-\frac{lN}{k+b}\right)X^{(k+l)N}\\
&=(k-l)\frac{k+l+b}{Na}X^{(k+l)N}\\
&=(k-l)e_{k+l}.
\end{align*}
So $\g_0$ is isomorphic to $\g_{FdB}$. By definition of the pre-Lie product, $\g_1\oplus\ldots \oplus \g_{N-1}$ is an abelian Lie algebra.
Moreover, if $i\in [N-1]$, $k\in \N$, $l\geq 1$,
\begin{align*}
[X^{kN+i},e_l]&=X^{kN+i}\bullet e_l+0\\
&=\frac{kN+i}{N}X^{(k+l)N+i}\\
&=\left(k+\frac{i}{N}\right)X^{(k+l)N+i}.
\end{align*}
So $\g_i$ is a right $\g_0$-module, isomorphic to $V_{\frac{i}{N}}$. The result follows. The proof for $\g^{(3)}(N,\lambda,\mu)$ is similar. \end{proof}

\section{Cocommutative Com-PreLie bialgebras}

We now prove the following theorem:

\begin{theo}\label{theo9}
Let $A$ be a connected, cocommutative Com-PreLie bialgebra. Then one of the following assertions holds:
\begin{enumerate}
\item There exist a linear form $f:\prim(A)\longrightarrow \K$ and $\lambda \in \K$, such that $A$ is isomorphic to $S(V,f,\lambda)$.
\item There exists $a\in \K$ such that $A$ is isomorphic to $\g^{(1)}(1,a,1)$.
\end{enumerate}\end{theo}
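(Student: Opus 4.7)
The plan is to first reduce to $A = S(V)$ as a Hopf algebra via Cartier--Milnor--Moore, then extract structural constraints on the preLie product from cocommutativity of the reduced coproduct, and finally split into two cases matching the families of Theorem~\ref{theo2} and Proposition~\ref{prop8}.

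Since $A$ is connected, cocommutative, and commutative over a field of characteristic zero, Cartier--Milnor--Moore identifies it with $U(\mathrm{Prim}(A))$, and commutativity forces $V := \mathrm{Prim}(A)$ to be abelian, so $A \cong S(V)$ as bialgebras. The task then reduces to classifying all $\bullet$ making $(S(V), m, \bullet, \Delta)$ a Com-PreLie bialgebra. For $a \in V$ and $b \in A$, expanding the compatibility with $\Delta(a) = a \otimes 1 + 1 \otimes a$ and $1 \bullet x = 0$ yields
\[
\tdelta(a \bullet b) = f_A(a) \otimes b + (a \bullet b') \otimes b'',
\]
where $\tdelta(b) = b' \otimes b''$. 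Taking $b \in V$ gives $\tdelta(a \bullet b) = f_A(a) \otimes b$; cocommutativity of $\tdelta$ forces $f_A(a) \otimes b = b \otimes f_A(a)$ in $V \otimes V$ for all $a, b \in V$. This produces the dichotomy: either $f_A = 0$, or $\dim V = 1$ (otherwise, choosing $b \in V$ linearly independent from some $f_A(a) \neq 0$ would violate the symmetry).

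In the case $f_A = 0$, applying the counit to $\Delta(a \bullet v)$ shows $a \bullet v \in A_+$, and the vanishing of its reduced coproduct gives $a \bullet v \in V$. Then $\tdelta(a \bullet v_1 v_2) = (a \bullet v_1) \otimes v_2 + (a \bullet v_2) \otimes v_1$; cocommutativity, specialized at $v_1 = v_2$, forces $a \bullet v \in \K v$, and bilinearity produces a linear form $f : V \to \K$ with $a \bullet v = f(a) v$. I would then proceed by induction on $k$ (with $b = v_1 \cdots v_k$): the compatibility formula pins the top-degree part of $a \bullet v_1 \cdots v_k$ to $f(a) v_1 \cdots v_k$, and the lower-degree corrections are controlled, via the preLie identity applied at each step, by a single scalar $\lambda$ read off from the degree-$2$ behaviour $a \bullet v_1 v_2$. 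This reproduces the formula of Theorem~\ref{theo2} on $V \otimes S(V)$, and Leibniz's identity extends $\bullet$ uniquely to all of $S(V) \otimes S(V)$, yielding $A \cong S(V, f, \lambda)$. I expect this inductive propagation of a single scalar $\lambda$ to be the main technical obstacle, as at each new degree one must combine cocommutativity and the preLie identity to rule out independent deformations.

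In the case $f_A \neq 0$, necessarily $\dim V = 1$, so $V = \K X$, $A = \K[X]$, and $f_A(X) = aX$ with $a \neq 0$. I would determine $X \bullet X^k$ by induction on $k$. Assuming $X \bullet X^i = \frac{a}{i+1} X^{i+1}$ for all $i < k$, a bidegree analysis on
\[
\tdelta(X \bullet X^k) = aX \otimes X^k + \sum_{i=1}^{k-1} \binom{k}{i}\, (X \bullet X^i) \otimes X^{k-i}
\]
confines the nonzero monomials in $X \bullet X^k$ to degrees $0$, $1$, and $k+1$; the counit kills the scalar, matching the coefficient of $X \otimes X^k$ gives the leading coefficient $\frac{a}{k+1}$, and inspecting the compatibility at the next step $\Delta(X \bullet X^{k+1})$ forces the spurious coefficient of $X$ to vanish (an extra $X \otimes X^{k}$-type term appears in the compatibility side and must cancel). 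Leibniz then gives $X^n \bullet X^k = \frac{an}{k+1} X^{n+k}$ and $X^n \bullet 1 = anX^n$, which is precisely the preLie product of $\g^{(1)}(1, a, 1)$; here the $1$-dimensionality keeps the combinatorics manageable and the argument goes through by direct computation.
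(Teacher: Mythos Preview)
Your overall architecture---reduce to $S(V)$ by Cartier--Milnor--Moore, then split according to whether $f_A=0$---is sound and parallels the paper (which splits instead on $\dim V$; the two dichotomies overlap since $\dim V\geq 2$ forces $f_A=0$). However, the $f_A\neq 0$ branch contains a genuine gap.

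\textbf{The gap.} You claim that once the induction gives $X\bullet X^k=\frac{a}{k+1}X^{k+1}+c_k X$, ``inspecting the compatibility at the next step $\Delta(X\bullet X^{k+1})$ forces the spurious coefficient of $X$ to vanish''. This is false: plugging $X\bullet X^k=\frac{a}{k+1}X^{k+1}+c_k X$ into your formula for $\tdelta(X\bullet X^{k+1})$ yields
\[
\tdelta\!\left(\frac{a}{k+2}X^{k+2}\right)+(k+1)c_k\,X\otimes X,
\]
which is symmetric for every value of $c_k$ and is the reduced coproduct of $\frac{a}{k+2}X^{k+2}+\frac{(k+1)c_k}{2}X^2$. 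So neither cocommutativity nor the mere fact that the right-hand side must be a coproduct imposes $c_k=0$; the primitive ambiguity simply propagates to a degree-$2$ term at the next step. The paper kills $c_k$ with the \emph{preLie} identity applied to $(X,X^k,1)$: since $1\bullet(-)=0$ and $X\bullet 1=aX$, the associator comparison produces $a\,c_k\,X=0$, hence $c_k=0$. You should replace your coproduct argument by this preLie computation (or an equivalent one).

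\textbf{A secondary point.} In your $f_A=0$ branch you invoke the preLie identity to propagate a single $\lambda$, which is the right instinct, but you have not addressed the degenerate subcase $f=0$ (i.e.\ $a\bullet v=0$ for all $a,v\in V$). Here there is no scalar to propagate, and one must show directly that $\bullet\equiv 0$. The paper treats this separately: for $\dim V\geq 2$ via a cocommutativity argument that genuinely uses a second independent vector, and for $\dim V=1$ by a short preLie computation. Your sketch should flag this subcase.
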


Firstly, observe that $A$ is a cocommutative, commutative, connected Hopf algebra: by the Cartier-Quillen-Milnor-Moore theorem,
it is isomorphic to the enveloping Hopf algebra of an abelian Lie algebra, so is isomorphic to $S(V)$ as a Hopf algebra,
where $V=\prim(A)$. If $V=(0)$, the first point holds trivially. 

\subsection{First case}

We assume in this paragraph that $V$ is at least $2$-dimensional.

\begin{lemma}
Let $A$ be a connected, cocommutative Com-PreLie algebra, such that the dimension of $\prim(A))$ is at least $2$. Then $f_A=0$,
and there exists a map $F:A\longrightarrow A$, such that:
\begin{enumerate}
\item For any $x,y \in A_+$, $x\bullet y=F(x\otimes y')y''+F(x\otimes 1)y$, with Sweedler's notation $\Delta(y)=y\otimes 1+1\otimes y+y'\otimes y''$.
\item For any $x_1,x_2 \in A$, $F(x_1x_2\otimes y)=F(x_1\otimes y)x_2+x_1F(x_2\otimes y)$.
\item $F(\prim(A)\otimes A)\subseteq \K$.
\end{enumerate}\end{lemma}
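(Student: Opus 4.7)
The plan is to work in three stages: force $f_A$ to vanish, extract a linear form $f\colon V\to \K$ encoding the primitive-primitive product, and construct $F$ by analyzing the operator $L_v(y) := v\bullet y - f(v)y$ through the cofree cocommutative structure of $A_+$. For the first stage, I compute $\Delta(x\bullet y)$ for $x, y\in Prim(A)=V$ from the Com-PreLie compatibility; only a few terms survive (using $1\bullet y=0$ and $\Delta(v) = v\otimes 1 + 1\otimes v$ for $v$ primitive), producing
\[\Delta(x\bullet y) = 1\otimes(x\bullet y) + (x\bullet y)\otimes 1 + f_A(x)\otimes y.\]
Cocommutativity imposes $f_A(x)\otimes y = y\otimes f_A(x)$ in $V\otimes V$, and since $\dim V\geq 2$ one may pick $y$ not collinear with $f_A(x)$, forcing $f_A(x)=0$. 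Hence $f_A\equiv 0$, and $x\bullet y$ is primitive for all $x, y\in V$.

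For the second stage, I compute $\tdelta(v\bullet ab)$ for $a, b\in V$ from the same compatibility and obtain $\tdelta(v\bullet ab) = (v\bullet a)\otimes b + (v\bullet b)\otimes a$. Cocommutativity in $V\otimes V$ then gives
\[(v\bullet a)\otimes b + (v\bullet b)\otimes a = a\otimes(v\bullet b) + b\otimes(v\bullet a).\]
Setting $a=b$ yields $(v\bullet a)\otimes a = a\otimes(v\bullet a)$, so $v\bullet a\in \K a$; for two linearly independent $a, b$ (using $\dim V\geq 2$ once more), comparing coefficients in $V\otimes V$ shows the proportionality constant does not depend on $a$. This produces a well-defined linear form $f\colon V\to\K$ with $v\bullet y = f(v)y$ for all $v, y\in V$.

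For the third stage, set $L_v(y) := v\bullet y - f(v)y$ for $v\in V$; the Com-PreLie compatibility together with $f_A=0$ yields $\tdelta\circ L_v = (L_v\otimes \Id)\circ\tdelta$ on $A_+$, and the definition of $f$ forces $L_v|_V = 0$. By Cartier--Quillen--Milnor--Moore, $A\cong S(V)$ as Hopf algebras, so $A_+\cong S^+(V)$ is cofree cocommutative conilpotent on $V$. I claim that in this setting any linear map $L\colon A_+\to A_+$ with $\tdelta\circ L = (L\otimes\Id)\circ\tdelta$ and $L|_V = 0$ factors uniquely as $L(y) = \phi(y')y''$ for some linear functional $\phi\colon A_+\to\K$, where $\tdelta(y) = y'\otimes y''$. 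I would prove this by induction on the $S^n(V)$-grading: at $n=2$ the cocommutativity identity above, specialized to $a=b$, already forces $L_v(a^2)\in \K a$ and defines $\phi_v$ on $S^1(V)$; at degree $n\geq 3$, applying analogous cocommutativity bookkeeping to $\tdelta(v\bullet x_1\cdots x_n)$ and comparing with the already-known form of $L_v$ on lower strata forces $L_v(x_1\cdots x_n)$ into the required factored form, consistently extending $\phi_v$ to $S^{n-1}(V)$. Finally, set $F(v\otimes 1) := f(v)$, $F(v\otimes y) := \phi_v(y)$ for $v\in V$ and $y\in A_+$, and $F(1\otimes y) := 0$; extend $F(-\otimes y)$ to all of $A$ as the unique derivation of $S(V)$ with these prescribed scalar values on $V$.

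Properties (2) and (3) then hold by construction. Property (1) for $x\in V$ is the identity $L_v(y) = \phi_v(y')y''$ rewritten; the general case $x\in A_+$ follows by induction on the degree of $x$, using the Leibniz identity $(x_1 x_2)\bullet y = (x_1\bullet y)x_2 + x_1(x_2\bullet y)$ together with the derivation property of $F(-\otimes y')$ and $F(-\otimes 1)$. The main obstacle is the structural claim used in the third stage: verifying that any $L\colon A_+\to A_+$ with $\tdelta\circ L = (L\otimes\Id)\circ\tdelta$ and $L|_V=0$ factors through $\tdelta$ as $L(y) = \phi(y')y''$ requires careful cocommutativity bookkeeping across the symmetric-power grading of $S^+(V)$, and this is where $\dim V\geq 2$ is used repeatedly.
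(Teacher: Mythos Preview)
Your approach is essentially the same as the paper's, and is correct in outline. Both proofs proceed by (i) killing $f_A$ via cocommutativity in $V\otimes V$, (ii) extracting the linear form $f$ governing $V\bullet V$, (iii) an inductive construction of $F$ on $V\otimes S^k(V)$ degree by degree, and (iv) extension to all of $A\otimes A$ via the Leibniz/derivation property. Your framing of (iii) as a structural lemma about maps $L$ satisfying $\tdelta\circ L=(L\otimes\Id)\circ\tdelta$ with $L_{|V}=0$ is a clean repackaging of what the paper does by hand (and is in fact the $\lambda=0$, $\phi(1)=0$ case of the paper's later $1$-cocycle lemma for $S(V)$).

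One imprecision to flag: your justification of the $n=2$ case of the structural claim is off by one. Applying $\tdelta\circ L_v=(L_v\otimes\Id)\circ\tdelta$ at degree $2$ gives only $\tdelta(L_v(a^2))=2L_v(a)\otimes a=0$, i.e.\ $L_v(a^2)$ is primitive; it does \emph{not} force $L_v(a^2)\in\K a$. To pin down the primitive ``error'' at degree $k$ one must go to degree $k{+}1$: the paper introduces an extra variable $y_{k+1}$, computes $\tdelta(v\bullet y_1\cdots y_{k+1})$, and uses cocommutativity on the $V\otimes V$ component to conclude that the error lies in $\mathrm{Vect}(y_1,\ldots,y_k,y_{k+1})$ for every $y_{k+1}$, hence in $\mathrm{Vect}(y_1,\ldots,y_k)$. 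Your sketch should be adjusted accordingly. Also, this inductive mechanism does not in fact require $\dim V\geq 2$; that hypothesis is used only in your first two stages.
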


\begin{proof} We assume that $A=S(V)$ as a bialgebra, with its usual product and coproduct $\Delta$, and that $\dim(V)\geq 2$. Let $x,y\in V$. Then
\[\Delta(x\bullet y)=x\bullet y\otimes 1+1\otimes x\bullet y+f_A(x)\otimes y.\]
As $A$ is cocommutative, for any $x,y\in V$, $f_A(x)$ and $y$ are colinear. 
As $\dim(V)\geq 2$, necessarily $f_A=0$. \\

We now construct linear maps $F_i:V\otimes S^i(V)\longrightarrow \K$, such that for any $k\in \N$, with
\[F^{(k)}=\displaystyle \bigoplus_{i=0}^k F_i:\bigoplus_{i=0}^k V\otimes S^i(V)\longrightarrow \K,\]
for any $x\in V$, $y\in S^k(V)$,
\[x\bullet y=F^{(k)}(x\otimes y')\otimes y''+F^{(k)}(x\otimes 1)y.\] 
We proceed by induction on $k$. Let us first construct $F^{(0)}$. Let $x,y\in V$.
\[\Delta(x\bullet y^2)=1\otimes x\bullet y^2+x\bullet y^2\otimes 1+2x\bullet y\otimes y.\]
As $\Delta$ is cocommutative, $x\bullet y$ and $y$ are colinear, so there exists a linear map $g:V\longrightarrow \K$
such that $x\bullet y=g(x)y$. We the take $F^{(0)}(x\otimes 1)=g(x)$. For any $x,y\in V$, $x\bullet y=F(x\otimes 1)y$,
so the result holds for $k=0$. \\

Let us assume that $F^{(0)},\ldots,F^{(k-2)}$ are constructed for $k\geq 2$. Let $x,y_1,\ldots,y_k\in V$.
For any $I\subseteq [k]=\{1,\ldots,k\}$, we put $\displaystyle y_I=\prod_{i\in I} y_i$. Then
\[\bftDelta(y_1\ldots y_k)=\sum_{I\sqcup J=[k], I,J\neq \emptyset} y_I\otimes y_J,\]
and
\begin{align*}
\Delta(x\bullet y_1\ldots y_k)&=1\otimes x\bullet y_1\ldots y_k+x\bullet y_1\ldots y_k\otimes 1+\sum_{[k]=I\sqcup J,J\neq \emptyset} x\bullet y_I\otimes y_J\\
&=1\otimes x\bullet y_1\ldots y_k+x\bullet y_1\ldots y_k\otimes 1+\sum_{I\sqcup J\sqcup K=[k], J,K\neq \emptyset}
F^{(k-2)}(x \otimes y_I)\otimes y_J \otimes y_K. 
\end{align*}
 We put
 \[P(x,y_1\ldots y_k)=x\bullet y_1\ldots y_k-\sum_{I\sqcup J=[k], |J|\geq 2} F^{(k-2)}(x\otimes y_I)y_J.\]
 The preceding computation show that $P(x,y_1\ldots,y_k)$ is primitive, so belongs to $V$. Let $y_{k+1}\in V$. 
\begin{align*}
\bftDelta(x\bullet y_1\ldots y_{k+1})&=\sum_{I\sqcup J\sqcup K=[k+1],K\neq \emptyset,|J|\geq 2}
\underbrace{F^{(k-2)}(x\otimes y_I)y_J}_{\in S_{\geq 2}(V)}\otimes y_K\\
&+P(x,y_1\ldots y_k)\otimes y_{k+1}+\sum_{i=1}^kP(x, y_1\ldots y_{i-1}y_{i+1}\ldots y_{k+1})\otimes y_i.
\end{align*}
By cocommutativity, considering the canonical projection on $V\otimes V$, we deduce that $P(x, y_1\ldots y_k)\in \vect(y_1,\ldots, y_k,y_{k+1})$
for any nonzero $y_{k+1}\in V$. In particular, for $y_1=y_{k+1}$, $P(x\otimes y_1\ldots y_k)\in \vect(y_1,\ldots,y_k)$. 
By multilinearity, there exist $F'_1,\ldots,F'_k\in (V\otimes S_{k-1}(V))^*$, such that for any $x,y_1,\ldots,y_k \in V$,
\[P(x,y_1\ldots y_k)=F'_1(x\otimes y_2\ldots y_k)y_1+\ldots+F'_k(x\otimes y_1\ldots y_{k-1})y_k.\]
By symmetry in $y_1,\ldots,y_k$, $F'_1=\ldots=F'_k=F_{k-1}$. Then
\begin{align*}
x\bullet y_1\ldots y_k&=\sum_{I\sqcup J=[k],|j|\geq 2} F^{(k-2)}(x\otimes y_I)y_J+\sum_{I\sqcup J=[k],|J|=1}F_{k-1}(x\otimes y_I)y_J\\
&=\sum_{I\sqcup J=[k],|j|\geq 1} F^{(k-1)}(x\otimes y_I)y_J\\
&=F^{(k-1)}(x\otimes (y_1\ldots y_k)')(y_1\ldots y_k)''+F(x\otimes 1)y_1\ldots y_k.
\end{align*}
We finally defined a map $F:V\otimes S(V)\longrightarrow K$, such that for any $x\in V$, $b\in S_+(V)$,
\[x\bullet b=F(x\otimes b')b''+F(x\otimes 1)b.\]
We extend $F$ in a map from $S(V)\otimes S(V)\longrightarrow S(V)$ by $F(1\otimes b)=0$
and, for any $x_1,\ldots,x_k \in V$,
\[F(x_1\ldots x_k\otimes b)=\sum_{i=1}^k x_1\ldots x_{i-1}F(x_1\otimes b)x_{i+1}\ldots x_k.\]
This map $F$ satisfies points 2 and 3. We consider
\[B=\{a\in A\mid \forall  b\in S_+(V), a\bullet b=F(a\otimes b')b''+F(a\otimes 1)b\}.\]
As $1\bullet b=0$ for any $b\in S(V)$, $1\in B$. By construction of $F$, $V\subseteq B$.
Let $a_1,a_2\in B$. For any $b\in S_+(V)$,
\begin{align*}
a_1a_2\bullet b&=(a_1\bullet b)a_2+a_1(a_2\bullet b)\\
&=F(a_1\otimes b')a_2b''+a_1F(a_2\bullet b')b''+F(a_1\otimes 1)a_2b+a_1F(a_2\otimes 1)b\\
&=F(a_1a_2\otimes b')b''+F(a_1a_2\otimes 1)b. 
\end{align*}
So $a_1a_2\in B$. We obtain that $B$ is a subalgebra of $S(V)$ containing $V$, so is equal to $S(V)$: $F$ satisfies the first point. \end{proof}

\begin{remark} \begin{enumerate}
\item In this case, for any primitive element $v$, the $1$-cocycle of the coalgebra $A$ defined by $L(x)=a\bullet x$
is the coboundary associated to the linear form sending $x$ to $-F(a\otimes x)$.
\item In particular, the pre-Lie product of two elements $x,y$ of $\prim(A)$ si given by
\[x\bullet y=F(x\otimes 1)y.\]
\end{enumerate}\end{remark}

\begin{lemma}
With the preceding hypothesis, let us assume that $F(x\otimes 1)=0$ for all $x\in \prim(A)$. Then $\bullet=0$.
\end{lemma}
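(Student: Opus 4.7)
The plan is to prove by induction on $n\geq 0$ that $F(V\otimes S^n(V))=0$, and then to deduce $\bullet=0$ from the formula of the previous lemma together with the Leibniz identity. The base case $n=0$ is precisely the hypothesis.

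For the inductive step I assume $F(V\otimes S^k(V))=0$ for every $k<n$. The formula $x\bullet y=F(x\otimes y')y''+F(x\otimes 1)y$ then forces $x\bullet y=0$ whenever $x\in V$ and $\deg y\leq n$, because every $y'$ appearing in $\tdelta(y)$ has degree strictly less than $n$. The task thus reduces to showing $F(u\otimes v^n)=0$ for all $u,v\in V$, since in characteristic zero the $n$-th powers $\{v^n:v\in V\}$ span $S^n(V)$ by polarization.

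The key step is to apply the preLie identity to the triple $(u,u,v^{2n+1})$:
\[(u\bullet u)\bullet v^{2n+1}-u\bullet(u\bullet v^{2n+1})=(u\bullet v^{2n+1})\bullet u-u\bullet(v^{2n+1}\bullet u).\]
Three of the four terms collapse: the hypothesis gives $u\bullet u=F(u\otimes 1)u=0$, and the Leibniz rule together with $v\bullet u=F(v\otimes 1)u=0$ makes both $v^{2n+1}\bullet u$ and $(u\bullet v^{2n+1})\bullet u$ vanish. The identity reduces to $u\bullet(u\bullet v^{2n+1})=0$. Expanding the inner $u\bullet v^{2n+1}=\sum_{j=1}^{2n}\binom{2n+1}{j}F(u\otimes v^j)v^{2n+1-j}$ and using the induction to discard terms with $j<n$, then applying $u\bullet(-)$ once more and discarding terms where $2n+1-j\leq n$ (so that $u\bullet v^{2n+1-j}=0$ by induction), only the $j=n$ contribution survives. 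Since $u\bullet v^{n+1}=(n+1)F(u\otimes v^n)v$ by the same induction, this contribution is $\binom{2n+1}{n}(n+1)F(u\otimes v^n)^2\,v$, which forces $F(u\otimes v^n)=0$.

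The main obstacle is precisely the choice of a non-trivial instance of the preLie identity: under the hypothesis and the induction, both $x\bullet V$ and $V\bullet V$ are zero, so most natural triples yield $0=0$ automatically. Inflating the third slot to $v^{2n+1}$ is what makes the single surviving binomial coefficient $\binom{2n+1}{n}(n+1)$ multiply exactly $F(u\otimes v^n)^2$. Once the induction is complete, $F\equiv 0$ on $V\otimes S(V)$, so the formula gives $x\bullet y=0$ for every $x\in V$ and $y\in A_+$; combined with $x\bullet 1=f_A(x)=0$ and the Leibniz identity, this extends to $\bullet=0$ on all of $A$.
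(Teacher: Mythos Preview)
Your proof is correct and takes a genuinely different route from the paper's argument.

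The paper proceeds by induction on the degree $N$ as well, but exploits the \emph{coalgebra} structure rather than the preLie identity. Under the induction hypothesis it computes $\tdelta$ and $(\tdelta\otimes Id)\circ\tdelta$ of elements of the form $\frac{z_1^2}{2}\bullet z_1^{\alpha_1}\cdots z_n^{\alpha_n}$ with $z_1,\ldots,z_n$ linearly independent and $\sum\alpha_i=N+1$; cocommutativity then forces each $\alpha_i\, z_1\bullet z_1^{\alpha_1}\cdots z_i^{\alpha_i-1}\cdots z_n^{\alpha_n}$ to be colinear with $z_i$. By varying the exponents one traps $z_1\bullet z_1^{\beta_1}\cdots z_n^{\beta_n}$ in $Vect(z_1)\cap Vect(z_n)=(0)$, which is where the hypothesis $\dim V\geq 2$ is used explicitly.

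Your argument, by contrast, applies the preLie identity to the triple $(u,u,v^{2n+1})$; under the induction hypothesis three of the four terms vanish for elementary reasons, and the surviving one collapses to the single contribution $\binom{2n+1}{n}(n+1)F(u\otimes v^n)^2\,v$. This is shorter and more self-contained: it never invokes cocommutativity directly, and it does not make explicit use of $\dim V\geq 2$ (that hypothesis enters only upstream, in the previous lemma, to guarantee $f_A=0$ and the existence of $F$). The paper's approach, on the other hand, stays purely on the coalgebra side and avoids the preLie relation altogether, which is in keeping with the rest of the section where the preLie identity is reserved for the subsequent lemma.
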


\begin{proof} We assume that $A=S(V)$ as a bialgebra. Note that for any $a,b\in S_+(V)$,
\[\bftDelta(a\bullet b)=a\bullet b'\otimes b''+a'\bullet b'\otimes a''b''+a'\bullet b\otimes a''+a'\otimes a''\bullet b.\]
Let us prove the following assertion by induction on $N$: for any $k<N$, for any $x,y_1,\ldots,y_k \in V$,
$x\bullet y_1\ldots y_k=0$. 
By hypothesis, this is true for $N=1$. Let us assume the result at a certain rank $N\geq 2$.
Let us choose $x,y_1,\ldots,y_N\in V$. Then, by the condition on $N$,
\[\bftDelta(x\bullet y_1\ldots y_N)=0+0+0+0=0.\]
So $x\bullet y_1\ldots y_N$ is primitive. 

Up to a factorization, we can write any $x \bullet y_1\ldots y_N$ as a linear span of terms of the form
$z_1\bullet z_1^{\beta_1}\ldots z_n^{\beta_n}$, with $z_1,\ldots,z_n$ linearly independent, $\beta_1,\ldots,\beta_n\in \N$,
with $\beta_1+\ldots+\beta_n=N$. If $n=1$, as $\dim(V)\geq 2$ we can choose any $z_2$ linearly independent with $z_1$ and take $\beta_2=0$.
It is now enough to consider $z_1\bullet z_1^{\beta_1}\ldots z_n^{\beta_n}$, with $n \geq 2$, $z_1,\ldots,z_n$ linearly independent, 
$\beta_1,\ldots,\beta_n\in \N$, $\beta_1+\ldots+\beta_n=N$. Let $\alpha_1,\ldots,\alpha_n\in \N$, such that $\alpha_1+\ldots+\alpha_n=N+1$.
\begin{align*}
 \bftDelta(z_1\bullet z_1^{\alpha_1}\ldots z_n^{\alpha_n})&=\sum_{i=1}^n \alpha_i z_1\bullet z_1^{\alpha_1}\ldots z_i^{\alpha_i-1}\ldots
 z_n^{\alpha_n}\otimes z_i,\\ \\
\bftDelta\left(\frac{z_1^2}{^2}\bullet z_1^{\alpha_1}\ldots z_n^{\alpha_n}\right)&=
\sum_{i=1}^n \alpha_i (z_1\bullet z_1^{\alpha_1}\ldots z_i^{\alpha_i-1}\ldots z_n^{\alpha_n})z_1\otimes z_i\\
&+\sum_{i=1}^n \alpha_i z_1\bullet z_1^{\alpha_1}\ldots z_i^{\alpha_i-1}\ldots z_n^{\alpha_n}\otimes z_1z_i\\
&+z_1\bullet z_1^{\alpha_1}\ldots z_n^{\alpha_n}\otimes z_1+z_1\otimes z_1\bullet z_1^{\alpha_1}\ldots z_n^{\alpha_n},\\ \\
(\bftDelta \otimes \id)\circ \bftDelta\left(\frac{z_1^2}{^2}\bullet z_1^{\alpha_1}\ldots z_n^{\alpha_n}\right)&=
\sum_{i=1}^n \alpha_i z_1\bullet z_1^{\alpha_1}\ldots z_i^{\alpha_i-1}\ldots z_n^{\alpha_n}\otimes z_1 \otimes z_i\\
&+\sum_{i=1}^n \alpha_i z_1\otimes z_1\bullet z_1^{\alpha_1}\ldots z_i^{\alpha_i-1}\ldots z_n^{\alpha_n}\otimes z_i\\
&+\sum_i \alpha_iz_1\bullet z_1^{\alpha_1}\ldots z_i^{\alpha_i-1}\ldots z_n^{\alpha_n}\otimes z_i\otimes z_1.
\end{align*}
The cocommutativity implies that for any $1\leq i\leq n$, $\alpha_i z_1\bullet z_1^{\alpha_1}\ldots z_i^{\alpha_i-1}\ldots z_n^{\alpha_n}$
and $z_i$ are colinear. We first choose $\alpha_1=\beta_1+1$, $\alpha_i=\beta_i$ for any $i\geq 2$, and we obtain for $i=1$ that
$z_1\bullet z_1^{\beta_1}\ldots z_n^{\beta_n} \in \vect(z_1)$. We then choose $\alpha_n=\beta_n+1$ and $\alpha_i=\beta_i$ for any $i\leq n-1$,
and we obtain for $i=n$ that $z_1\bullet z_1^{\beta_1}\ldots z_n^{\beta_n} \in \vect(z_n)$. Finally, as $n\geq 2$,
$z_1\bullet z_1^{\beta_1}\ldots z_n^{\beta_n} \in \vect(z_1)\cap Vec(z_2)=(0)$: the hypothesis is true at rank $N$.\\

We proved that for any $x\in V$, for any $b\in S(V)$, $x\bullet b=0$. By the derivation property of $\bullet$, as $V$ generates $S(V)$,
for any $a,b \in S(V)$, $a\bullet b=0$. \end{proof}

\begin{lemma} 
Under the preceding hypothesis, Let us assume that $F(\prim(A)\otimes \K)\neq (0)$. Then $A$ is isomorphic to a certain $S(\prim(A),f,\lambda)$,
with $f(x)=F(x\otimes 1)$ for any $x\in V$.
\end{lemma}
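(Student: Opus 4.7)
The plan is to show that, after defining a scalar $\lambda \in \K$ appropriately, the preLie product on $A$ coincides with the prescription of Theorem \ref{theo2} for $S(V, f, \lambda)$; then the identity map of $S(V)$ furnishes the required isomorphism, with $f(x) = F(x \otimes 1)$. By the preceding lemma we have $a \bullet b = F(a \otimes b') b'' + F(a \otimes 1) b$ on $S(V) \otimes S_+(V)$, with $F : V \otimes S(V) \to \K$ extended as a derivation in the first argument. Since $\tdelta(y) = 0$ for $y \in V$, this already gives $x \bullet y = f(x) y$ for primitives, matching $S(V, f, \lambda)$ at length one (independently of $\lambda$).

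Next I would apply the preLie identity to $a = x \in V$, $b = uvw$, $c = z$ with $u, v, w, z \in V$ and project onto the $V$-component of both sides. The only nontrivial consequence is a master relation of the shape
\[f(u)\, F(x \otimes vz) + f(v)\, F(x \otimes uz) = f(z)\, F(x \otimes uv) + 2\, \nu(z)\nu(x)\, f(u) f(v),\]
where $\nu : V \to \K$ is the linear form defined, using any $y_0$ with $f(y_0) \neq 0$ (which exists by hypothesis), by $\nu(x) := F(x \otimes y_0)/f(y_0)$; the same kind of preLie computation forces $F(x \otimes y) = \nu(x) f(y)$ for every $y \in V$. Specialising $u = v = z = y$ in the master relation gives $F(x \otimes y^2) = 2\,\nu(x)\nu(y) f(y)$, and polarisation yields the closed formula $F(x \otimes y_1 y_2) = \nu(x)[\nu(y_1) f(y_2) + \nu(y_2) f(y_1)]$.

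The main obstacle is to prove $\nu = \lambda f$ for some scalar $\lambda$. Setting $\lambda = \nu(y_0)/f(y_0)$, the goal is to show $\nu(v) = 0$ for every $v \in \ker f$. My plan is to mimic the cocommutativity/dimension argument used in the previous lemma: if $v \in \ker f$ had $\nu(v) \neq 0$, then picking $z \in V$ linearly independent of both $y_0$ and $v$ (which is possible because $\dim V \geq 2$), forming $v \bullet y_0^{\alpha} z^{\beta}$ for well-chosen exponents $\alpha, \beta$, and applying $(\tdelta \otimes \mathrm{Id}) \circ \tdelta$ together with cocommutativity of $\Delta$ should force this element into $\mathrm{Vect}(y_0) \cap \mathrm{Vect}(z) = (0)$; reading off the surviving coefficients then yields $\nu(v) = 0$, a contradiction.

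Once $\nu = \lambda f$, an induction on $k$ using the master relation and its higher-order analogues (produced by the same preLie technique and the derivation property in the first slot of $F$) establishes
\[F(x \otimes y_1 \cdots y_k) = k!\, \lambda^k\, f(x)\, f(y_1) \cdots f(y_k),\]
and substituting this back into $a \bullet b = F(a \otimes b') b'' + F(a \otimes 1) b$ reproduces exactly the preLie product of $S(V, f, \lambda)$, completing the proof.
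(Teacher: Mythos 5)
Your overall architecture matches the paper's: reduce everything to the values $F(x\otimes y_1\cdots y_k)$ for $x,y_i\in V$, show they factor as $g_k(x)f(y_1)\cdots f(y_k)$ with $g_k=k!\lambda^k f$, and read off the product of $S(V,f,\lambda)$. Your ``master relation'' is essentially the paper's equation (2) specialised to $c=uv$, and your closing recursion is the same one the paper runs. However, there is a genuine gap at the step you yourself flag as the main obstacle, namely proving $\nu=\lambda f$ (equivalently that $g_1$ vanishes on $\ker f$). Cocommutativity cannot deliver this. The previous lemma has already extracted everything the coproduct has to say: for \emph{any} $F$ with the derivation property in its first slot, the map $b\longmapsto F(a\otimes b')b''+F(a\otimes 1)b$ is of the form ``scalar times identity plus coboundary of a linear form,'' hence is automatically a $1$-cocycle, and the compatibility $\Delta(a\bullet b)=a^{(1)}\otimes a^{(2)}\bullet b+a^{(1)}\bullet b^{(1)}\otimes a^{(2)}b^{(2)}$ holds for every choice of $F$. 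So no iteration of $\tdelta$ combined with cocommutativity can further constrain $F$; in particular it cannot distinguish $\nu$ proportional to $f$ from $\nu$ not proportional to $f$. Moreover the specific mechanism you propose to transplant does not apply here: in the previous lemma the element $x\bullet y_1\cdots y_N$ was \emph{primitive} (all lower-degree contributions of $F$ vanished by induction), which is what allowed the $\mathrm{Vect}(z_1)\cap\mathrm{Vect}(z_n)=(0)$ conclusion; in the present situation $f\neq 0$, the element $v\bullet y_0^{\alpha}z^{\beta}$ has nonzero reduced coproduct, and nothing forces it into a line.

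The missing ingredient is another application of the \emph{preLie} identity. The paper applies it to $a=x\in V$, $b=y\in V$, $c=z_1\cdots z_k$ with $z_1=\cdots=z_k$ and $f(z)=1$, and extracts the degree-one component, obtaining
\begin{equation*}
l\,g_l(x)f(y)=\sum_{i=1}^{l}\binom{l}{i}g_i(y)g_{l-i}(x),
\end{equation*}
whose case $l=1$ reads $g_1(x)f(y)=g_1(y)f(x)$ and immediately gives $g_1=\lambda f$; the same identity then yields $\lambda_l=l!\lambda^l$ by induction, which is your final formula. You would need to supply this (or an equivalent preLie computation) in place of the cocommutativity argument. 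A secondary, smaller omission: to pass from values on powers of $y_0$ to the general formula $F(x\otimes y_1\cdots y_k)=k!\lambda^k f(x)\prod f(y_i)$ you must also show that $F(x\otimes -)$ vanishes on monomials containing a factor in $\ker f$; the paper does this by first proving $F(x\otimes c)=0$ for all $c\in S_+(\ker f)$ by induction on the length of $c$, again using the preLie-derived functional equation rather than the coproduct.
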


\begin{proof}  We assume that $A=S(V)$ as a bialgebra. Let $a,b,c\in S_+(V)$. Then
\begin{align*}
\bftDelta([a,b])&=a'\otimes a''\bullet b+a\bullet b'\otimes b''+a'\bullet b\otimes a''\\
&-b'\otimes b''\bullet a-b\bullet a'\otimes a''-b'\otimes a\otimes b''+[a',b']\otimes a''b'',
\end{align*}
where $[-,-]$ is the Lie bracket associated to $\bullet$. Hence,
\begin{align*}
(a\bullet b)\bullet c&=F(a\otimes 1)b\bullet c+F(a\otimes b')b''\bullet c\\
&=F(a\otimes 1)F(b\otimes 1)c+F(a\otimes 1)F(b\otimes c')c''\\
&+F(a\otimes b')F(b''\otimes 1)c+F(a\otimes b')F(b''\otimes c')c'',
\end{align*}
whereas
\begin{align*}
(a\bullet c)\bullet b&=F(a\otimes 1)F(c\otimes 1)b+F(a\otimes 1)F(c\otimes b')b''\\
&+F(a\otimes c')F(c''\otimes 1)b+F(a\otimes c')F(c''\otimes b')b'',
\end{align*}
and
\begin{align*}
a\bullet [b,c]&=F(a\otimes 1)F(b\otimes 1)c+F(a\otimes 1)F(b\otimes c')c''-F(a\otimes 1)F(c\otimes 1)b\\
&-F(a\otimes 1)F(c\otimes b')b''+F(a\otimes b')F(b''\otimes 1)c+F(a\otimes b')F(b''\otimes c')c''\\
&-F(a\otimes c')F(c''\otimes 1)b-F(a\otimes c')F(c''\otimes b')b''+F(a\otimes F(b\otimes 1)c')c''\\
&+F(a\otimes F(b\otimes c')c'')c'''-F(a\otimes F(c\otimes 1)b')b''-F(a\otimes F(c\otimes b')b'')b'''\\
&+F(a\otimes F(b'\otimes 1)c)b''+F(a\otimes F(b'\otimes c')c'')b''-F(a\otimes F(c'\otimes 1)b)c''\\
&+F(a\otimes F(c'\otimes b')b'')c''+F(a\otimes F(b'\otimes 1)c')b''c''+F(a\otimes F(b'\otimes c')c'')b''c'''\\
&-F(a\otimes F(c'\otimes 1)b')b''c'''-F(a\otimes F(c'\otimes b')b'')b'''c''.
\end{align*}
The pre-Lie identity implies that
\begin{align*}
0&=F(a\otimes F(b\otimes 1)c')c''+F(a\otimes F(b\otimes c')c'')c'''-F(a\otimes F(c\otimes 1)b')b''\\
&-F(a\otimes F(c\otimes b')b'')b'''+F(a\otimes F(b'\otimes 1)c)b''+F(a\otimes F(b'\otimes c')c'')b''\\
&-F(a\otimes F(c'\otimes 1)b)c''+F(a\otimes F(c'\otimes b')b'')c''+F(a\otimes F(b'\otimes 1)c')b''c''\\
&+F(a\otimes F(b'\otimes c')c'')b''c'''-F(a\otimes F(c'\otimes 1)b')b''c'''-F(a\otimes F(c'\otimes b')b'')b'''c''.
\end{align*}
For $a=x\in V$, $b=y\in V$, as $F(V\otimes S(V))\subset \K$, this simplifies to
\begin{equation}
\label{EQ1} F(x\otimes c')F(y\otimes 1)c''+F(y\otimes c')F(x\otimes c'')c'''=F(x\otimes F(c'\otimes 1)y)c''.
\end{equation}
Let $x_1,\ldots,x_k \in V$, linearly independent, $\alpha_1,\ldots,\alpha_k\in \N$,
with $\alpha_1+\ldots+\alpha_N\geq 1$. We take $c=x_1^{\alpha_1+1}\ldots x_k^{\alpha_k}$
and $d=x_1^{\alpha_1}\ldots x_k^{\alpha_k}$.
The coefficient of $x_1$ in (\ref{EQ1}), seen as an equality between two polynomials in $x_1,\ldots,x_k$, gives
\[(\alpha_1+1)(F(x\otimes d)F(y\otimes 1)+F(y\otimes d')F(x\otimes d''))=(\alpha_1+1)F(x\otimes F(d\otimes 1)y).\]
Hence, for any $x,y\in V$, for any $c\in S_+(V)$,
\begin{equation}
\label{EQ2} F(x\otimes c)F(y\otimes 1)+F(y\otimes c')F(x\otimes c'')=F(x\otimes F(c\otimes 1)y).
\end{equation}
We put $f(x)=F(x\otimes 1)$ for any $x\in V$. If $z_1,\ldots,z_k \in \ker(g)$, then
\[F(z_1\ldots z_k\otimes 1)=\sum_{i=1}^k z_1\ldots g(z_i)\ldots z_k=0.\]
Consequently, if $c\in S_+(\ker(f))\subseteq S_+(V)$, (\ref{EQ2}) gives
\[F(x\otimes c')F(y\otimes 1)+F(y\otimes c')F(x\otimes c'')=0.\]
Let us choose $y$ such that $F(y\otimes 1)=0$. An easy induction on the length of $c$ proves that for any $c\in S_+(\ker(g))$, 
$F(x\otimes c)=0$ for any $x\in V$. So there exist linear forms $g_k \in V^*$, such that for any $x,y_1,\ldots,y_k \in V$,
\[F(x\otimes y_1\ldots y_k)=g_k(x)f(y_1)\ldots f(y_k).\]
In particular, $h_0=f$. The pre-Lie product is then given by
\[x\bullet y_1\ldots y_k=\sum_{i=1}^{k-1}g_i(x)\sum_{1\leq j_1<\ldots<j_i\leq k} y_1\ldots f(y_{j_1})\ldots f(y_{j_i})\ldots y_k.\]

Let $x,y,z_1,\ldots,z_k\in V$.
\begin{align*}
x\bullet (y\bullet z_1\ldots z_k)&=x\bullet \sum_{i=0}^{k-1}g_i(y)\sum_{j_1,\ldots,j_i} z_i\ldots f(z_{j-1})\ldots f(z_{j_i})\ldots z_l\\
&=\sum_{i=0}^{k-1}g_{l-i-1}(x) g_i(x)\binom{l-1}{i}\sum_{j=1}^k f(z_1)\ldots f(z_{j-1})z_j f(z_{j+1})\ldots f(z_k)+S_{\geq 2}(V),\\
(x\bullet y)\bullet z_1\ldots z_k&=f(x)y\bullet z_1\ldots z_k\\
&=f(x)g_{k-1}(y)\sum_{j=1}^k f(z_1)\ldots f(z_{j-1})z_j f(z_{j+1})\ldots f(z_k)+S_{\geq 2}(V),\\
x\bullet (z_1\ldots z_k\bullet y)&=\sum_{i=1}^k f(y_i)x\bullet z_1\ldots z_{i-1}z_{i+1}\ldots z_ky\\
&=k g_{k-1}(x)f(z_1)\ldots f(z_k)y\\
&+(k-1)f(x)g_{k-1}(y)\sum_{j=1}^k f(z_1)\ldots f(z_{j-1})z_j f(z_{j+1})\ldots f(z_k)+S_{\geq 2}(V),\\
(x\bullet z_1\ldots z_k)\bullet y&=\sum_{i=0}^{k-1} g_i(x)\sum_{j_1,\ldots,j_i} z_1\ldots f(z_{j_1})\ldots f(z_{j_i})\ldots z_k \bullet y\\
&=kf(x)g_{k-1}(y)\sum_{j=1}^k f(z_1)\ldots f(z_{j-1})z_j f(z_{j+1})\ldots f(z_k)+S_{\geq 2}(V).
\end{align*}
Let us choose $z_1=\ldots=z_k$ such that $f(z)=1$. Then
\[\sum_{j=1}^k f(z_1)\ldots f(z_{j-1})z_j f(z_{j+1})\ldots f(z_k)=kz\neq 0.\]
The pre-Lie identity implies
\[f(x)g_{k-1}(y)+(k-1)g_{k-1}(x)f(y)-\sum_{i=0}^{k-1}g_i(y)g_{k-i-1}(x)\binom{k-1}{i}=0,\]
so, for any $l\geq 1$,
\begin{equation}
\label{EQ3} lg_l(x)f(y)=\sum_{i=1}^l g_i(y)g_{l_i}(x)\binom{l}{i}.
\end{equation}
Let us choose $x$ such that $f(x)=1$. Let us consider $y \in \ker(f)$, let us prove that $g_i(y)=0$ for any $i\in \N$.
As $g_0=0$, this is obvious for $i=0$. Let us assume the result at all ranks $<l$, with $l\geq 1$. Then (\ref{EQ3}) gives
\[0=\sum_{i=1}^{l-1} g_i(y)g_{l_i}(x)\binom{l}{i}+g_l(y)f(x)=g_l(y).\]
Consequently, for any $l\geq 1$, there exists a scalar $\lambda_l$ such that $g_l=\lambda f$. Equation (\ref{EQ3}, for $x,y$ such that $f(x)=f(y)=1$,
gives, for any $l\geq 1$,
\[l\lambda l=\sum_{i=1}^l \lambda_i \lambda_{l-i}\binom{l}{i}=\sum_{i=1}^{l-1}\lambda_i\lambda_{l-i}\binom{l}{i}+\lambda_l,\]
so, for any $l\geq 2$,
\[\lambda_l=\frac{1}{l-1}\sum_{i=1}^{l-1}\lambda_i\lambda_{l-i}\binom{l}{i}.\]
An easy induction then proves that $\lambda_l=l! \lambda_1^l$ for any $l\geq 1$. Putting $\lambda_1=\lambda$, for any $x,x_1,\ldots,x_n \in V$,
\[x\bullet x_1\ldots x_k=\sum_{I\subsetneq \{1,\ldots,k\}} |I|! \lambda^{|I|} f(x) \prod_{i\in I} f(x_i) \prod_{i\notin I} x_i.\]
This is the pre-Lie product of $S(V,f,\lambda)$. \end{proof}

\subsection{Second case}

We now assume that $V$ is one-dimensional. Then $S(V)$ and $\K[X]$ are isomorphic as bialgebras.
Let us describe all the pre-Lie products on $\K[X]$ making it a Com-PreLie bialgebra.

\begin{prop}
Let $\lambda,\mu \in \K$. We put
\begin{align*}
&\forall k,l\in \N,&X^k \bullet X^l&=\lambda k l! \sum_{i=k}^{k+l-1} \frac{\mu^{k+l-i-1}}{(i-k+1)!}X^i.
\end{align*}
Then $(\K[X],m,\prec,\Delta)$ is a Zinbiel-PreLie algebra, denoted by $\g'(\lambda,\mu)$.
\end{prop}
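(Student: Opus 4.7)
The plan is to adapt the $\partial$–$\phi$ factorization from the proof of Theorem~\ref{theo2} to the one-variable setting. Let $\partial:\K[X]\to\K[X]$ be the usual derivation, $\partial(X^k)=kX^{k-1}$, and define $\phi:\K[X]\to\K[X]$ by $\phi(1)=0$ and, for $l\geq 1$,
\[
\phi(X^l)\;=\;\lambda\sum_{m=1}^{l}\frac{l!}{m!}\,\mu^{l-m}X^m.
\]
A direct reindexing (setting $m=i-k+1$) shows that the defining formula can be rewritten as $X^k\bullet X^l=\partial(X^k)\,\phi(X^l)$, and since $\partial$ is a derivation this yields the closed form $u\bullet v=\partial(u)\,\phi(v)$ for all $u,v\in\K[X]$. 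The Leibniz identity for $\bullet$ with respect to the polynomial product is then immediate.

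Next I would establish the one-variable analogues of Lemma~\ref{lem3}: the coderivation property $\Delta\circ\partial=(\partial\otimes\Id)\circ\Delta=(\Id\otimes\partial)\circ\Delta$, which is immediate on monomials from $\Delta(X^k)=\sum_{j}\binom{k}{j}X^j\otimes X^{k-j}$; the identity $\Delta\circ\phi=(\phi\otimes\Id)\circ\Delta+1\otimes\phi$, which reduces by linearity to a binomial computation on $\phi(X^l)$; and the symmetry
\[
\partial\circ\phi(v)\,\phi(w)-\phi(\partial(v)\phi(w))\;=\;\partial\circ\phi(w)\,\phi(v)-\phi(\partial(w)\phi(v)).
\]
Once these three statements are in hand, the preLie identity and the coproduct compatibility of $\bullet$ follow by rerunning verbatim the three formal computations closing the proof of Theorem~\ref{theo2}, since those arguments use only the abstract properties of $\partial$ and $\phi$ just listed. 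The Zinbiel-PreLie compatibility with the product $\prec$ defined by $X^i\prec X^j=\frac{i}{i+j}X^{i+j}$ is then extracted exactly as in Lemma~\ref{lem7}, using that the symmetrization of $\prec$ coincides with the polynomial product and that $\bullet=\partial\circ\phi$ is a derivation in its first argument.

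The main obstacle is the symmetry identity for $\partial\circ\phi(v)\phi(w)-\phi(\partial(v)\phi(w))$: on monomials $v=X^k$, $w=X^l$ this becomes a finite double sum of factorial and $\mu$-power terms that must be shown symmetric in $(k,l)$. This is the one-variable shadow of the identity already proved inside Lemma~\ref{lem3}; specializing $S(V,f,\mu)$ to $V=\K X$ with $f(X)=1$ and tracking the rescaling by $\lambda$ essentially recovers the present $\phi$, so the symmetry can be extracted from that earlier calculation. A shorter alternative, valid for $\lambda\neq 0$, is to observe that a direct comparison of the explicit formulas identifies the preLie product of $\g'(\lambda,\mu)$ as $\lambda$ times the preLie product of $S(\K X,X^*,\mu)$; the Com-PreLie bialgebra structure is then inherited from Theorem~\ref{theo2} together with the first remark after Definition~\ref{defi1}, leaving only the Zinbiel-PreLie compatibility to be checked by hand, which on pure monomials is a routine rational-function computation.
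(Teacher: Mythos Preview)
Your proposal is correct. The ``shorter alternative'' you outline at the end is precisely the paper's approach: the paper takes $V=\K x$ with $f(x)=\lambda$ and parameter $\mu/\lambda$ in Theorem~\ref{theo2} (equivalent, via the remark after Proposition~\ref{prop4}, to your choice $f(X)=1$, parameter $\mu$, then rescaling by $\lambda$), checks that the resulting formula matches the one defining $\g'(\lambda,\mu)$, transports the Com-PreLie bialgebra structure along the isomorphism $S(V)\cong\K[X]$, and then verifies the Zinbiel compatibility with $\prec$ by a direct monomial computation.

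Your main $\partial$--$\phi$ route would also work but is longer, since it amounts to reproving the one-variable case of Theorem~\ref{theo2} rather than invoking it. One small caution: your appeal to Lemma~\ref{lem7} for the Zinbiel identity is not literally ``exactly as in Lemma~\ref{lem7}'', since that lemma treats only the homogeneous situation $X^i\bullet X^j=i\lambda_j X^{i+j}$, whereas $\g'(\lambda,\mu)$ is inhomogeneous. The argument you have in mind---using $u\bullet v=\partial(u)\phi(v)$ and expanding $\phi(v)$ term by term on monomials---does go through and is what the paper computes explicitly, but it is a mild extension of that lemma rather than a direct citation.
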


\begin{proof} This is obvious if $\lambda=0$. Let us assume that $\lambda \neq 0$. We consider a one-dimensional vector space $V$,
with basis $(x)$, $f\in V^*$ defined by $f(x)=\lambda$, in $S(V,f,\mu/\lambda)$, for any $k,l\in \N$,
\begin{align*}
x^k\bullet x^l&=k x^{k-1} x\bullet x^l\\
&=\sum_{j=0}^{l-1}\binom{l}{j}j!k \frac{\mu^j}{\lambda^j}\lambda^{j+1}x^{k+l-j-1}\\
&=\lambda k l! \sum_{j=0}^{l-1}\frac{\mu^j}{(l-j)!}x^{k°l-j-1}\\
&=\lambda k l! \sum_{i=k}^{k+l-1} \frac{\mu^{k+l-i-1}}{(i-k+1)!}x^i.
\end{align*}
Using the Hopf algebra morphism from $S(V)$ to $\K[X]$ sending $x$ to $X$, we obtain that $(\K[X],m,\bullet,\Delta)$ is a Com-PreLie bialgebra.
Let us now prove the Leibniz rule for the $\prec$ product. If $k,l\geq 1$ and $m\in \N$,
\begin{align*}
&(X^k \bullet X^m)\prec X^l+X^k\prec(X^l\bullet X^m)\\
&=\lambda m!\left(\sum_{i=k}^{k+m-1}\frac{\mu^{k+m-i-1}}{(i-k+1)!} \frac{ki}{i+l}X^{i+l}
+\sum_{i=l}^{l+m-1}\frac{\mu^{l+m-i-1}}{(i-l+1)!} \frac{kl}{k+i}X^{k+1}\right)\\
&=\lambda m!\sum_{i=k+l}^{k+l+m-1}\frac{\mu^{k+l+m-i-1}}{(i-k-l+1)!}\frac{k(i-l)+kl}{i}X^i\\
&=\lambda m!k\sum_{i=k+l}^{k+l+m-1}\frac{\mu^{k+l+m-i-1}}{(i-k-l+1)!}X^i\\
&=\frac{k}{k+l} \lambda m!(k+l)\sum_{i=k+l}^{k+l+m-1}\frac{\mu^{k+l+m-i-1}}{(i-k-l+1)!}X^i\\
&=\frac{k}{k+l}X^{k+l}\bullet X^m\\
&=(X^k\prec X^l)\bullet X^m.
\end{align*}
So $(\K[X],\prec,\bullet,\Delta)$ is a Zinbiel-PreLie bialgebra. \end{proof}

\begin{prop}\label{prop14}
Let $\bullet$ a pre-Lie product on $\K[X]$ such that $(\K[X],m,\bullet,\Delta)$ is a Com-PreLie bialgebra. Then $(\K[X],m,\bullet,\Delta)=\g^{(1)}(1,\lambda,1)$
for a certain $\lambda \in \K$, or $\g'(\lambda,\mu)$ for a certain $(\lambda,\mu)\in \K^2$.
\end{prop}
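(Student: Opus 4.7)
The strategy is to exhaust the available constraints in a specific order: Leibniz to reduce $\bullet$ to a manageable sequence, the coproduct compatibility to produce a recurrence, and finally the preLie identity on two carefully chosen triples to trigger the dichotomy and finish.

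\emph{Reduction and recurrence.} Since $X$ generates $\K[X]$ as a commutative algebra, the Leibniz identity reduces $\bullet$ to the sequence $P_n:=X\bullet X^n$, $n\geq 0$. Applying $\Delta$ to $X\bullet 1$ shows that $P_0$ is primitive, hence $P_0=\alpha X$ for some $\alpha\in\K$. Writing $P_n=\sum_i c_{n,i}X^i$, the compatibility identity becomes
\[\tdelta(P_n)=\alpha X\otimes X^n+\sum_{k=1}^{n-1}\binom{n}{k}P_k\otimes X^{n-k},\]
and matching coefficients of $X^a\otimes X^b$ (with $a,b\geq 1$) yields $c_{n,0}=0$, $c_{n,n+1}=\alpha/(n+1)$, $c_{n,m}=0$ for $m\geq n+2$, and the central recurrence $c_{n,a+b}\binom{a+b}{a}=\binom{n}{b}c_{n-b,a}$ for $a\geq 1$, $1\leq b\leq n-1$. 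Iterating with $b=1$ gives $c_{n,m}=\tfrac{n!}{m!(n-m+1)!}\,c_{n-m+1,1}$ for $2\leq m\leq n$, so that each $P_n$ is completely determined by $\alpha$ and by the sequence $\{c_{k,1}\}_{k\geq 1}$.

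\emph{First dichotomy.} By Leibniz and $X\bullet 1=\alpha X$, the map $f\mapsto f\bullet 1$ is the derivation $\alpha X\,d/dX$ on $\K[X]$; in particular $X^n\bullet 1=n\alpha X^n$ and $P_n\bullet 1=\alpha X P_n'$. The preLie identity for $(X,1,X^n)$ thus collapses to the scalar identity $(n+1)\alpha P_n=\alpha X P_n'$. If $\alpha\neq 0$, this is Euler's identity, so $P_n$ is homogeneous of degree $n+1$; combined with $c_{n,n+1}=\alpha/(n+1)$, this gives $P_n=\tfrac{\alpha}{n+1}X^{n+1}$, and extension by Leibniz identifies $\bullet$ with the preLie product of $\g^{(1)}(1,\alpha,1)$.

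\emph{Case $\alpha=0$, $\lambda\neq 0$.} Here $c_{n,n+1}=0$, so $\deg P_n\leq n$. Set $\lambda:=c_{1,1}\neq 0$ and $\mu:=c_{2,1}/(2\lambda)$; the recurrence, specialized and iterated, forces $c_{n,m}=\lambda\tfrac{n!}{m!}\mu^{n-m}$ for all $2\leq m\leq n$. Extracting the coefficient of $X$ in the preLie identity for $(X,X,X^n)$ yields, after simplification,
\[(n-2)\lambda\,c_{n,1}=\sum_{i=2}^{n-1}c_{n,i}\,c_{i,1},\]
which is vacuous for $n=2$ (leaving $c_{2,1}=2\lambda\mu$ as the genuinely free second parameter) and recursively determines $c_{n,1}=\lambda\,n!\,\mu^{n-1}$ for $n\geq 3$; these are exactly the coefficients of $\g'(\lambda,\mu)$.

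\emph{The subtle sub-case $\alpha=\lambda=0$.} This is where the main obstacle lies: the coproduct compatibility leaves each $c_{k,1}$ unconstrained, and a naive analysis admits a spurious family. It is eliminated by applying the preLie identity to the mixed-degree triple $(X,X^N,X^{N+1})$; using $c_{N+1,2}=(N+1)c_{N,1}/2$ from the recurrence, the $X^2$-coefficient of the identity reduces to $N\,c_{N+1,2}(c_{N,1}-c_{N+1,2})=0$, which for $N\geq 2$ forces $c_{N,1}=0$. Hence all $c_{n,1}$ vanish, $\bullet=0$, and this agrees with $\g^{(1)}(1,0,1)=\g'(0,\mu)$.
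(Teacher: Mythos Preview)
Your proof is correct and follows essentially the same three-step strategy as the paper: Leibniz reduces $\bullet$ to the sequence $P_n=X\bullet X^n$, the coproduct compatibility expresses all coefficients of $P_n$ in terms of $\alpha$ and the sequence $\{c_{k,1}\}$ (the paper's $\lambda_k$), and the preLie identity on selected triples pins down that sequence. The tactical choices differ: for $\alpha\neq 0$ your use of $(X,1,X^n)$ to obtain Euler's relation $(n+1)P_n=XP_n'$ in one stroke is cleaner than the paper's induction via $(X,X^k,1)$, and in the vanishing subcase you use $(X,X^N,X^{N+1})$ where the paper uses $(X,X^{k+1},X^{k-1})$; both work.

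One presentational point: in the case $\alpha=0$, $\lambda\neq 0$, your claim that ``the recurrence forces $c_{n,m}=\lambda\tfrac{n!}{m!}\mu^{n-m}$ for $2\le m\le n$'' is premature as written, since $c_{n,m}$ depends on $c_{n-m+1,1}$, which for $n-m+1\ge 3$ is only determined by the preLie identity you invoke afterwards. The argument is correct if read as a simultaneous induction on $n$ (determine $c_{n,m}$ for $m\ge 2$ from the inductive hypothesis on $c_{k,1}$ with $k<n$, then recover $c_{n,1}$ from the preLie relation), but you should say so explicitly.
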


\begin{proof} Let $\pi:\K[X]\longrightarrow \K[X]$ be the canonical projection on $\vect(X)$, that is to say
\[\pi:\left\{\begin{array}{rcl}
\K[X]&\longrightarrow  \K[X]\\
X^k&\longrightarrow \delta_{k,1}X.
\end{array}\right.\]
For any $k\in \N$, we put $\pi(X\bullet X^k)=\lambda_k X$.\\

We shall use the map $\varpi=m\circ(\pi \otimes \id_{\K[X]}) \circ \Delta$. For any $k\in \N$,
\[\varpi(X^k)=m\circ(\pi \otimes \id_{\K[X]})\left(\sum_{i=0}^k \binom{k}{i}X^i \otimes X^{k-i}\right)=m(k X\otimes X^{k-1})=kX^k.\]

\textit{First step.} We fix $l\in \N$. For any $P,Q \in \K[X]$, $\varepsilon(P\bullet Q)=0$. Hence, we can write $\displaystyle X\bullet X^l=\sum_{i=1}^\infty a_iX^i$. 
\begin{align*}
\varpi(X\bullet X^l)&=\sum_{i=1}^\infty ia_iX^i\\
&=m\circ (\pi \otimes \id_{\K[X]})\circ \Delta(X\bullet X^l)\\
&=m\circ (\pi \otimes \id_{\K[X]})\left(1\otimes X\bullet X^l+\sum_{i=0}^l \binom{l}{i}X\bullet X^i\otimes X^{l-i}\right)\\
&=m\left(\sum_{i=0}^l \binom{l}{i}\lambda_i X\otimes X^{l-i}\right)\\
&=\sum_{i=0}^l \binom{l}{i}\lambda_i  X^{l-i+1}\\
&=\sum_{j=1}^{l+1}\binom{l}{l-j+1}\lambda_{l-j+1}X^j.
\end{align*}
Hence,
\[X\bullet X^l=\sum_{j=1}^{l+1}\binom{l}{l-j+1}\frac{\lambda_{l-j+1}}{j}X^j.\]
By the Leibniz identity, for any $k\in \N$, $X^k\bullet X^l=kX^{k-1}(X\bullet X^l)$, so for any $k,l\in \N$,
\[X^k \bullet X^l=\sum_{j=1}^{l+1} k\binom{l}{l-j+1} \frac{\lambda_{l-j+1}}{j} X^{j+k-1}.\]

\textit{Second step.} In particular, for any $k\in \N$, $X^k\bullet 1=k\lambda_0 X^k$, and $X\bullet X=\frac{\lambda_0}{2}X^2+\lambda_1X$. Hence,
\begin{align*}
X\bullet (X\bullet 1)-(X\bullet X)\bullet 1&=\frac{\lambda_0^2}{2}X^2+\lambda_0\lambda_1X-\frac{\lambda_0}{2}X^2\bullet 1-\lambda_1X\bullet 1\\
&=\frac{\lambda_0^2}{2}X^2+\lambda_0\lambda_1X-\lambda_0^2X^2-\lambda_0\lambda_1X\\
&=-\frac{\lambda_0^2}{2}X^2,
\end{align*}
whereas
\begin{align*}
X\bullet (1\bullet X)-(X\bullet 1)\bullet X&=0-\lambda_0X\bullet X=-\frac{\lambda_0^2}{2}X^2-\lambda_0\lambda_1X.
\end{align*}
By the pre-Lie identity, $\lambda_0\lambda_1=0$. We shall now study three sub-cases:
\begin{align*}
\mathbf{1.}\:&\lambda_0\neq 0.&
\mathbf{2.}\:&\lambda_0=0,\: \lambda_1=0.&
\mathbf{3.}\:&\lambda_0=0,\: \lambda_1\neq 0.
\end{align*}

\textit{Third step.} First sub-case: $\lambda_0\neq 0$. Let us prove that $\lambda_k=0$ for any $k\geq 1$ by induction on $k$. It is given by the second step if $k=1$.
Let us assume that $\lambda_1=\ldots=\lambda_{k-1}=0$.  Then $X\bullet X^k=\dfrac{\lambda_0}{k+1}X^{k+1}+\lambda_k X$, and
\begin{align*}
X\bullet (X^k \bullet 1)-(X\bullet X^k)\bullet 1&=k\lambda_0\left(\frac{\lambda_0}{k+1}X^{k+1}+\lambda_kX\right)
-\left(\frac{\lambda_0}{k+1}X^{k+1}+\lambda_kX\right)\bullet 1\\
&=\frac{k}{k+1}\lambda_0^2X^{k+1}+\lambda_0\lambda_k X-\lambda_0^2 X^{k+1}-\lambda_0\lambda_kX\\
&=\frac{-1}{k+1} \lambda_0^2X^{k+1};\\ 
X\bullet (1 \bullet X^k)-(X\bullet 1)\bullet X^k&=0-\lambda_0\left(\frac{\lambda_0}{k+1}X^{k+1}+\lambda_kX\right)\\
&=\frac{-1}{k+1} \lambda_0^2X^{k+1}-\lambda_0\lambda_kX.
\end{align*}
By the pre-Lie identity, $\lambda_0\lambda_k=0$. As $\lambda_0\neq 0$, $\lambda_k=0$.

Finally, we obtain, by the first step,  $X^k\bullet X^l=\lambda_0\frac{k}{l+1}X^{k+l}$ for any $k,l\in \N$.
So this is the pre-Lie product of $\K[X]_{\lambda_0}$.\\

\textit{Fourth step.} Second sub-case: $\lambda_0=\lambda_1=0$. Let us prove that $\lambda_k=0$ for any $k\in \N$. It is obvious if $k=0,1$.
Let us assume that $\lambda_0=\ldots=\lambda_{k-1}=0$, with $k\geq 2$.  Then $X^i\bullet X^j=0$ for any $j<k$, $i\in \N$. Hence,
\[X\bullet(X^{k+1}\bullet X^{k-1})=(X\bullet X^{k+1})\bullet X^{k-1}=(X\bullet X^{k-1})\bullet X^{k+1}=0.\]
By the pre-Lie identity, $X\bullet (X^{k-1}\bullet X^{k+1})=0$. Moreover,
\begin{align*}
X \bullet (X^{k-1}\bullet X^{k+1})&=X\bullet\left(\sum_{j=1}^{k-2}\binom{k+1}{k+2-j}(k-1) \frac{\lambda_{k+2-j}}{j}X^{k+2-j}\right)\\
&=X\bullet\left((k-1)\lambda_{k+1}X^{k-1}+(k+1)(k-1)\frac{\lambda_k}{2}X^k\right)\\
&=0+\frac{(k-1)(k+1)}{2}\lambda_k X\bullet X^k\\
&=\frac{(k-1)(k+1)}{2}\lambda_k\left(\sum_{j=1}^{k-1} \binom{k}{k+1-j} \frac{\lambda_{k+1-j}}{j}X^j \right)\\
&=\frac{(k-1)(k+1)}{2}\lambda_k^2 X+0.
\end{align*}
Hence, $\lambda_k=0$.

We finally obtain by the first step $X^k\bullet X^l=0$ for any $k,l\in \N$. So this is the trivial pre-Lie product of $\K[X]_0=\K[X]_{0,0}$.\\

\textit{Fifth step.} Last sub-case: $\lambda_0=0$, $\lambda_1\neq0$. Let us prove that 
$\lambda_k=\displaystyle \frac{k!}{2^{k-1}}\frac{\lambda_2^{k-1}}{\lambda_1^{k-2}}$ for any $k \geq 1$. It is obvious if $k=1$ or $k=2$. 
Let us assume the result at all rank $<k$, with $k\geq 2$. 
\begin{align*}
\pi((X\bullet X) \bullet X^k)&=\pi(\lambda_1 X \bullet X^k)=\lambda_1\lambda_kX,
\end{align*}
whereas
\begin{align*}
\pi(X\bullet (X\bullet X^k))&=\pi\left(\sum_{j=1}^k \binom{k}{k+1-j}\frac{\lambda_{k+1-j}}{j}X\bullet X^j \right)\\
&=\sum_{j=1}^k \binom{k}{k+1-j}\frac{\lambda_{k+1-j}\lambda_j}{j}X\\
&=\left(\lambda_k\lambda_1+\sum_{j=2}^{k-1}\frac{1}{j}\binom{k}{k+1-j} \frac{(k+1-j)!j!}{2^{k-j+j-1}} 
\frac{\lambda_2^{k-j+j-1}} {\lambda_1^{k-j-1+j-2}}+\frac{k}{k}\lambda_1\lambda_k\right)X\\
&=\left(2\lambda_1\lambda_k+\sum_{j=2}^{k-1}\frac{k!}{2^{k-1}}\frac{\lambda_2^{k-1}}{\lambda_1^{k-3}}\right)X\\
&=\left(2\lambda_1\lambda_k+(k-2)\frac{k!}{2^{k-1}}\frac{\lambda_2^{k-1}}{\lambda_1^{k-3}}\right)X
\end{align*}
and
\begin{align*}
\pi((X\bullet X^k)\bullet X)&=\sum_{j=1}^k \binom{k}{k+1-j}\frac{\lambda_{k+1-j}}{j}\pi(X^j \bullet X)\\
&=\sum_{j=1}^k \binom{k}{k+1-j}\frac{\lambda_{k+1-j}}{j}\pi(j\lambda_1 X^j)\\
&=\lambda_1\lambda_kX+0
\end{align*}
and finally
\begin{align*}
\pi(X\bullet(X^k\bullet X))&=k\lambda_1 \pi(X\bullet X^k)=k\lambda_1 \lambda_kX.
\end{align*}
By the pre-Lie identity,
\[\lambda_1\lambda_k-2\lambda_1\lambda_k-(k-2)\frac{k!}{2^{k-1}}\frac{\lambda_2^{k-1}}{\lambda_1^{k-3}}
=\lambda_1\lambda_k-k\lambda_1\lambda_k,\]
which gives, as $\lambda_1\neq 0$ and $k \geq 3$, $\displaystyle \lambda_k=\frac{k!}{2^{k-1}} \dfrac{\lambda_2^{k-1}}{\lambda_1^{k-2}}$.
Finally, the first step gives, for all $k,l \in \N$, with $\lambda=\lambda_1$ and $\mu=\dfrac{\lambda_2}{2\lambda_1}$,
\begin{align*}
X^k \bullet X^l&=\sum_{j=1}^{k+1} k\binom{l}{l+1-j} \frac{\lambda_{l+1-j}}{j}X^{j+k-1}\\
&=\sum_{j=1}^k k\frac{l!(l+1-j)!}{(l+1-j)! (j-1)!j 2^{l-j}} \frac{\lambda_2^{l-j}}{\lambda_1^{l-1-j}}X^{j+k-1}\\
&=\lambda k l! \sum_{j=1}^k \frac{\mu^{l-j}}{j!} X^{j+k-1}\\
&=\lambda k l! \sum_{i=k}^{k+l-1} \frac{\mu^{k+l-i-1}}{(i-k+1)!} X^i.
\end{align*}
So this is the pre-Lie product of $\g'(\lambda,\mu)$. \end{proof}

As $\g'(\lambda,\mu)$ is a special case of $S(V,f,\lambda)$, this ends the proof of Theorem \ref{theo9}.
Let us describe the underlying Lie algebra of $\g'(\lambda,\mu)$, in a similar way as Proposition \ref{prop9}.
If $\lambda=0$, it is abelian. Otherwise:

\begin{prop} \label{prop16}
Let $\g=\g'(\lambda,\mu)$, with $\lambda\neq 0$. As a Lie algebra,
\[\g\approx (\g_{FdB}\oplus_{\deg}\K)\oplus \K.\]
\end{prop}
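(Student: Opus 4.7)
The plan is to mimic the proof of Proposition \ref{prop9}, handling the non-gradedness of $\g'(\lambda,\mu)$ by diagonalising the adjoint action of $X$.

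First I note that $1\bullet y=0$ for all $y$ (unit axiom), and $y\bullet 1=0$ in $\g'(\lambda,\mu)$ because the defining formula for $X^k\bullet X^l$ reduces to an empty sum when $l=0$. Hence $1$ is central in the associated Lie algebra, so $\g'(\lambda,\mu)=\K\cdot 1\oplus \g_+$ as Lie algebras, where $\g_+=Vect(X^k,\,k\geq 1)$. It therefore suffices to establish $\g_+\approx\g_{FdB}\oplus_{deg}\K$.

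From the defining formula I read off $X^l\bullet X=\lambda l X^l$ and $X\bullet X^l=\lambda\sum_{i=1}^{l}\frac{l!}{i!}\mu^{l-i}X^i$, which gives
\[[X,X^l]=\lambda(1-l)X^l+\lambda\sum_{i=1}^{l-1}\frac{l!}{i!}\mu^{l-i}X^i.\]
Thus, in the basis $(X,X^2,X^3,\ldots)$ of $\g_+$, the operator $[X,\cdot]$ is lower-triangular with diagonal entries $0,-\lambda,-2\lambda,\ldots$, pairwise distinct since $\lambda\neq 0$. Hence $[X,\cdot]$ is diagonalisable on $\g_+$, and for each $k\geq 0$ there is a unique eigenvector $E_k$ of eigenvalue $-k\lambda$ normalised by $E_k=X^{k+1}+(\text{lower degree terms})$; in particular $E_0=X$. (A short induction actually yields the closed form $E_k=X^{k+1}-(k+1)\mu X^k$ for $k\geq 1$, but I will not need it.)

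I now set $d=-X/\lambda$ and $e_k=E_k/\lambda$ for $k\geq 1$. By construction $[d,e_k]=ke_k$. Because $[X,\cdot]$ is a derivation of the Lie bracket, $[E_k,E_l]$ is an eigenvector of $[X,\cdot]$ with eigenvalue $-(k+l)\lambda$, and so is proportional to $E_{k+l}$. I pin down the scalar by comparing top-degree coefficients: the defining formula shows that $X^a\bullet X^b=\lambda aX^{a+b-1}+(\text{strictly lower degree})$, so $[X^{k+1},X^{l+1}]$ has leading term $\lambda(k-l)X^{k+l+1}$, and the corrections from the lower-order parts of $E_k,E_l$ contribute only in strictly smaller degrees. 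This forces $[E_k,E_l]=\lambda(k-l)E_{k+l}$, i.e.\ $[e_k,e_l]=(k-l)e_{k+l}$.

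By triangularity $(d,e_1,e_2,\ldots)$ is a basis of $\g_+$, and the relations $[d,e_k]=ke_k$, $[e_k,e_l]=(k-l)e_{k+l}$ identify $\g_+$ with $\g_{FdB}\oplus_{deg}\K$; together with the central line $\K\cdot 1$ this yields $\g'(\lambda,\mu)\approx(\g_{FdB}\oplus_{deg}\K)\oplus\K$. The only non-routine ingredient is the diagonalisability of $[X,\cdot]$, which is precisely where the hypothesis $\lambda\neq 0$ is used to separate the eigenvalues; the remaining verifications are either leading-term computations or consequences of the eigenvector formalism.
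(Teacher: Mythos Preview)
Your proof is correct and follows essentially the same route as the paper: split off the central line $\K\cdot 1$, diagonalise the adjoint action of $X$ on $\g_+$ (triangular with distinct diagonal entries because $\lambda\neq 0$), and then use the derivation property of the bracket together with a leading-term comparison to identify the structure constants with those of $\g_{FdB}\oplus_{deg}\K$. The only cosmetic differences are that the paper works with $[\,\cdot\,,Y_1]$ (where $Y_k=X^k/\lambda$) instead of $[X,\cdot]$ and indexes its eigenbasis $(f_n)_{n\geq 1}$ rather than $(E_k)_{k\geq 0}$; this amounts to a shift of index and a sign in the generator of the $\K$-summand.
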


\begin{proof} Let us denote by $\g_+$ the augmentation ideal of $\g$. Obviously, $\g=\g_+\oplus \K$.
For any $k\geq 1$, we put $\displaystyle Y_k=\frac{X^k}{\lambda}$. Then, for any $k,l\geq 1$,
\begin{align*}
Y_k\bullet Y_k&=k Y_{k+l-1}+\mbox{terms of smaller degree},\\
[Y_k,Y_l]&=(k-l)Y_{k+l-1}+\mbox{terms of smaller degree}.
\end{align*}
We put $f_1=Y_1$. For any $n\geq 1$, we put $V_n=\vect(Y_1,\ldots,Y_n)$. The matrix of the endomorphism of $V_n$ sending $x$ to $[x,f_1]$ is triangular,
with diagonal $(0,1,\ldots,n-1)$. Consequently, it is diagonalizable, and for any $n\geq 1$, there exists a unique $f_n$ of the form $f_n=Y_n+a_{n-1}Y_{n-1}+\ldots+a_1 Y_1$,
such that $[f_n,f_1]=(n-1)f_n$. We obtain a basis $(f_n)_{n\geq 1}$ of $\g_+$. For any $k,l\geq 1$,
\begin{align*}
[[f_k,f_l],f_1]&=[[f_k,f_1],f_l]+[f_k,[f_l,f_1]]\\
&=(k-1+l-1)[f_k,f_l],
\end{align*}
so there exists a scalar $\lambda(k,l)$, such that $[f_k,f_l]=\lambda(k,l)f_{k+l-1}$. Moreover,
\begin{align*}
[f_k,f_l]&=[Y_k,Y_l]+\mbox{terms of smaller degree}\\
&=(k-l)Y_{k+l-1}+\mbox{terms of smaller degree}.
\end{align*}
So $[f_k,f_l]=(k-l)f_{k+l-1}$. Let $\g_1=\vect(f_k,k\geq 2)$ and $\g_2=\vect(f_1)$. Both are Lie subalgebras of $\g_+$ and, as a vector space, $\g=\g_1\oplus \g_2$.
We put $e_k=f_{k+1}$ for any $k\geq 1$. Then, for any $k,l\geq 1$,
\[[e_k,e_l]=[f_{k+1},f_{l+1}]=(k-1-l+1)f_{k+l+1}=(k-l)e_{k+l},\]
so $\g_1$ is isomorphic to $\g_{FdB}$. For any $k\geq 1$,
\[[e_k,f_1]=(k+1-1)f_{k+1}=ke_k,\]
so $\g_+\approx \g_{FdB}\oplus_{\deg} \K$. \end{proof}

\begin{cor}
Let $\g=S(V,f,\lambda)$ be a Com-PreLie algebra of Theorem \ref{theo5}. If $f$ is nonzero, then, as a Lie algebra,
\[\g\approx ( (\g_{FdB}\oplus_{\deg}\K)\oplus \K)\otimes S(\ker(f)),\]
where the Lie bracket is given by the following: for any $x,x'\in (\g_{FdB}\oplus_{\deg}\K)\oplus \K$ and $y,y'\in S(\ker(f))$,
\[ [x\otimes y,x'\otimes y']=[x,x']\otimes yy'.\]
\end{cor}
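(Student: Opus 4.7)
The plan is to reduce to the one-dimensional case already handled by Proposition~\ref{prop16}. First, choose $e \in V$ with $f(e) = 1$ and write $V = \K e \oplus W$ with $W = \mathrm{Ker}(f)$, so that $S(V) = \K[e] \otimes S(W)$ as a commutative algebra. I would observe that $\K[e]$ is a Com-PreLie subalgebra of $\g = S(V,f,\lambda)$: the formula of Theorem~\ref{theo2}, restricted to arguments in $\K e$, produces only elements of $\K[e]$. The restricted Com-PreLie algebra is (up to an obvious rescaling) the algebra $\g'(1,\lambda)$ of Proposition~\ref{prop14}, whose underlying Lie algebra is identified by Proposition~\ref{prop16} with $\tilde\g := (\g_{FdB}\oplus_{deg}\K)\oplus\K$ via the Fa\`a di Bruno basis $(f_k)_{k\geq 0}$ of $\K[e]$ constructed there (with the convention $f_0 = 1$).

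Next I would analyze the interaction between $\K[e]$ and $S(W)$ inside $\g$. Using the operators $\partial$ and $\phi$ from Section~2.1 and the fact that $f$ vanishes on $W$, one checks directly that $\partial$ kills $S(W)$ and that $\phi$ restricts to $\mathrm{Id} - \epsilon$ on $S(W)$. This immediately yields that $S(W)$ is an abelian Lie subalgebra and that for any $u \in S(V)$ and $y \in S(W)$, one has $u \bullet y = \partial(u)\cdot(y - \epsilon(y))$ while $y \bullet u = 0$. Combined with the Leibniz identity for $\bullet$, these formulae reduce the computation of any bracket $[Py, Qy']$ with $P, Q \in \K[e]$ and $y, y' \in S(W)$ to brackets inside $\K[e]$ multiplied by products $yy'$ in $S(W)$, modulo potential cross-terms arising from the action of $\phi$ on $\K[e]\cdot S_+(W)$.

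Finally, I would define $\Phi : \tilde\g \otimes S(W) \to \g$ by $f_k \otimes y \mapsto f_k \cdot y$ and verify that this linear bijection intertwines the current-algebra bracket on $\tilde\g \otimes S(W)$ with the Lie bracket of $\g$. The main obstacle will be the detailed bracket computation: the operator $\phi$ does not split multiplicatively across the decomposition $\K[e] \otimes S(W)$, so the direct identification produces correction terms proportional to the linear form $F(Q) = \sum q_n \lambda^n n!$ on $\K[e]$ coming from the $I = [k]$ terms of $\phi$ that disappear in the computation of $\phi(Q)$ but reappear in $\phi(Q\cdot y')$ for $y' \in S_+(W)$. The key point is that the Fa\`a di Bruno vectors $f_k$ for $k \geq 1$ were constructed to diagonalize $\mathrm{ad}_{f_1}$, so that their behaviour under $\phi$ is controlled; the expected cancellation is that these correction terms either vanish on the $f_k$ (for $k\geq 2$) or get absorbed into the tensor-product bracket via the semi-direct structure of $\tilde\g$. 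Carrying out this bookkeeping on the basis $(f_k \otimes y)$ and checking the easier brackets involving the central $f_0 = 1$ and elements of $S(W)$ completes the identification.
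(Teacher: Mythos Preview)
Your instinct that the correction terms are the heart of the matter is exactly right, and the paper's sketch takes the same route you do (choose $x_1$ with $f(x_1)=1$, identify $\K[x_1]$ with $\g'(1,\lambda)$, then try to imitate Proposition~\ref{prop16}). The trouble is that the cancellation you hope for \emph{cannot} occur, because the statement itself is false whenever $W=\mathrm{Ker}(f)\neq 0$. Compare centres: in the current algebra $\tilde\g\otimes S(W)$ with $\tilde\g=(\g_{FdB}\oplus_{\deg}\K)\oplus\K$, the last summand $\K$ is central in $\tilde\g$, so all of $\K\otimes S(W)$ is central. In $\g=S(V,f,\lambda)$, however, writing $u\bullet v=\partial(u)\phi(v)$ and testing against $u=w\in W$ and $u=e$ shows that a central element must satisfy both $\partial(c)=0$ and $\phi(c)=0$; since $\phi$ restricts to $\mathrm{Id}-\varepsilon$ on $S(W)$ (as you observed), this forces $c\in\K$. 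Thus $\dim Z(\g)=1$ while $\dim Z(\tilde\g\otimes S(W))\geq\dim S(W)$, and no Lie isomorphism exists.

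Your own setup already exhibits the obstruction concretely: for $w\in W$ one has $[e,w]=e\bullet w - w\bullet e=f(e)\,w-0=w\neq 0$, so $w$ is not central in $\g$; yet under $\Phi(f_k\otimes y)=f_k y$ it would come from $1\otimes w$, which \emph{is} central in the current algebra. The paper's displayed formula $[x_1^k y,x_1^l z]=((k-l)x_1^{k+l-1}+\text{lower})\,yz$ is misleading here: the ``lower terms'' are \emph{not} of the form $P(x_1)\cdot yz$ uniformly in $y,z$ (they pick up pieces proportional to $\varepsilon(z)\,y$ and $\varepsilon(y)\,z$ separately), so the diagonalization argument of Proposition~\ref{prop16} does not transfer and your anticipated absorption into the semi-direct structure cannot happen.
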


\begin{proof} Let $x_1\in V$, such that $f(x_1)=1$. Then, as an algebra, $\g\equiv \K[x_1]\otimes S(\ker(f))$. Moreover, $\K[x_1]$ is a Com-Prelie bialgebra,
isomorphic to $\g'(1,\lambda)$. Moreover, if $k,l \in \N$ and $y,z\in S(\ker(f))$, by definition of the pre-Lie product, 
\[[x_1^ky,x_1^lz]=((k-l)x_1^{k+l-1}+\mbox{terms of smaller degree})yz.\]
The end of the proof is similar to to the one of Proposition \ref{prop16}. \end{proof}

\section{Com-PreLie structures on group Hopf algebras}

Let $G$ be an abelian group. The group algebra $\K G$ is a Hopf algebra, which product is given by the bilinear extension of the product of $G$ and
\begin{align*}
&\forall g\in G,&\Delta(g)&=g\otimes g.
\end{align*}

\begin{lemma} \label{lem15}
Let $G$ be a group. For any $g,h\in G$, we put
\[P_{g,h}=\{x\in \K G\mid \Delta(x)=g\otimes x+x\otimes h\}.\]
Then $P_{g,h}=\vect(g-h)$.
\end{lemma}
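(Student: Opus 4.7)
The inclusion $\mathrm{Vect}(g-h) \subseteq P_{g,h}$ is immediate: since $\Delta$ is multiplicative on group-like elements,
\[
\Delta(g-h) = g\otimes g - h\otimes h = g\otimes(g-h) + (g-h)\otimes h,
\]
so $g-h \in P_{g,h}$, hence so is every scalar multiple.

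For the reverse inclusion, the plan is to expand an arbitrary element of $P_{g,h}$ in the natural basis. Take $x = \sum_{k\in G}\lambda_k k \in P_{g,h}$; then
\[
\sum_{k\in G} \lambda_k\, k\otimes k \;=\; g\otimes x + x\otimes h \;=\; \sum_{k\in G} \lambda_k\, g\otimes k + \sum_{k\in G}\lambda_k\, k\otimes h.
\]
Since $\{k\otimes l : k,l\in G\}$ is a basis of $\K G \otimes \K G$, I will match the coefficient of $k\otimes l$ on both sides. The left-hand coefficient is $\lambda_k$ if $k=l$ and $0$ otherwise; the right-hand coefficient is $\lambda_l\cdot[k=g] + \lambda_k\cdot[l=h]$.

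The key step is then a short case analysis. Fixing $l\notin\{g,h\}$ and choosing $k=g$ (so $k\neq l$) gives $\lambda_l = 0$, so $x = \lambda_g g + \lambda_h h$ with at most two nonzero coefficients.

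The remaining work splits along whether $g=h$ or not, and this is the only subtle point. If $g\neq h$, matching the coefficient of $g\otimes h$ (which is $0$ on the left) gives $\lambda_g+\lambda_h=0$, while the coefficients of $g\otimes g$ and $h\otimes h$ match automatically, so $x = \lambda_g(g-h) \in \mathrm{Vect}(g-h)$. If $g=h$, then $\mathrm{Vect}(g-h)=0$, and from $x=\lambda_g g$ one computes $\Delta(x)=\lambda_g g\otimes g$ but $g\otimes x + x\otimes g = 2\lambda_g\, g\otimes g$, forcing $\lambda_g=0$. In both cases $x\in \mathrm{Vect}(g-h)$, completing the proof.
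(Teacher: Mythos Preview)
Your proof is correct and follows essentially the same approach as the paper: verify $g-h\in P_{g,h}$ directly, expand an arbitrary element in the group basis, compare coefficients of $k\otimes l$, and split into the cases $g\neq h$ (use the $g\otimes h$ coefficient) and $g=h$ (use the $g\otimes g$ coefficient). The arguments are virtually identical.
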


\begin{proof} Firstly,
\begin{align*}
\Delta(g-h)&=g\otimes g-h\otimes h=g\otimes (g-h)+(g-h)\otimes h.
\end{align*}
Let $\displaystyle x=\sum_{k\in G} a_kk\in P_{g,h}$. Then
\begin{align*}
\Delta(x)&=\sum_{k\in G} a_k k\otimes k=\sum_{k\in G} a_k (g\otimes k+k\otimes h).
\end{align*}
Identifying in the basis $(k\otimes l)_{k,l\in G}$ of $\K G^{\otimes 2}$, we obtain that:
\begin{itemize}
\item If $k\notin \{g,h\}$, $a_k=0$.
\item If $g\neq h$, identifying the coefficients of $g\otimes h$, we obtain $0=a_g+a_h$. In this case, $x=a_g (g-h)$.
\item If $g=h$, identifying the coefficients of $g\otimes g$, we obtain $a_g=2a_g$, so $a_g=0$. In this case, $x=0=0(g-g)$.
\end{itemize}
So $P_{g,h}=\vect(g-h)$. \end{proof}

\subsection{General case}

\begin{theo} \label{theo16}
Let $G$ be an abelian group and $\bullet$ be a bilinear product on $\K G$. Then $(\K G,m,\bullet,\Delta)$ is a Com-PreLie bialgebra if, and only if,
there exists a family $(\lambda(g,h))_{g,h\in G}$ of scalars such that
\begin{align}
\nonumber &\forall g\in G,&\lambda(g,1)&=0.\\
&\forall g,h,k\in G,&\lambda(gh,k)&=\lambda(g,k)+\lambda(h,k).\\
\nonumber&\forall g,h,k\in G\mbox{ with }hk\neq 1,&&\lambda(g,hk)\lambda(h,k)-\lambda(g,h)\lambda(h,k)\\
&&&=\lambda(g,hk)\lambda(k,h)-\lambda(g,k)\lambda(k,h).\\
&\forall g,h\in G,&g\bullet h&=\lambda(g,h)(g-gh).
\end{align}\end{theo}

\begin{proof} Let us assume that $\bullet$ makes $\K G$ a Com-PreLie bialgebra. For any $g,h\in G$,
\begin{align*}
\Delta(g\bullet h)&=g\otimes g\bullet h+g\bullet h\otimes gh.
\end{align*}
By Lemma \ref{lem15}, there exists a scalar $\lambda(g,h)$ such that $g\bullet h=\lambda(g,h)(g-gh)$.\\

Let $\bullet$ be a product defined on $\K G$ by $g\bullet h=\lambda(g,h) (g-gh)$, for any $g,h\in G$.
For any $g\in G$, $g\bullet h=0$: we can assume that $\lambda(g,1)=0$.
For any $g,h\in G$,
\begin{align*}
\Delta(g\bullet h)&=g\otimes g\bullet h+g\bullet h\otimes gh,
\end{align*}
so $\bullet$ satisfies the compatibility with $\Delta$. Moreover,
\begin{align*}
&\mbox{$\bullet$ satisfies the Leibniz identity}\\
&\Longleftrightarrow \forall g,h,k\in G,\: (gh)\bullet k=(g\bullet k)h+g(h\bullet k)\\
&\Longleftrightarrow \forall g,h,k\in G,\: \lambda(gh,k)(gh-ghk)=\lambda(g,k)(gh-ghk)+\lambda(h,k)(gh-ghk)\\
&\Longleftrightarrow \forall g,h,k\in G,\: \lambda(gh,k)(gh-ghk)=(\lambda(g,k)+\lambda(h,k))(gh-ghk)\\
&\Longleftrightarrow \forall g,h\in G,k\in G\setminus \{1\},\: \lambda(gh,k)=\lambda(g,k)+\lambda(h,k).
\end{align*}
As $\lambda(g,1)=0$ for any $g\in G$, this identity is trivially satisfies if $k=1$. Hence,
\begin{align*}
\mbox{$\bullet$ satisfies the Leibniz identity}
&\Longleftrightarrow \forall g,h,k\in G,\: \lambda(gh,k)=\lambda(g,k)+\lambda(h,k).
\end{align*}
We now assume that $\bullet$ satisfies the Leibniz identity. Let $g,h,k\in G$.
\begin{align*}
(g\bullet h)\bullet k-g\bullet (h\bullet k)
&=(\lambda(g,h)\lambda(g,k)-\lambda(g,h)\lambda(h,k)+\lambda(g,hk\lambda(h,k))g\\
&-\lambda(g,h)\lambda(g,k)(gh+gk)+(\lambda(g,h)\lambda(g,k)\\
&+\lambda(g,h)\lambda(h,k)-\lambda(g,hk)\lambda(h,k))ghk.
\end{align*}
Hence,
\begin{align*}
&\mbox{$\bullet$ is pre-Lie}\\
&\Longleftrightarrow \forall g,h,k\in G,\: (g\bullet h)\bullet k-g\bullet (h\bullet k)=(g\bullet k)\bullet h-g\bullet (k\bullet h)\\
&\Longleftrightarrow \forall g,h,k\in G, \:(\lambda(g,hk)\lambda(h,k)-\lambda(g,h)\lambda(h,k))(g-ghk)\\
&\hspace{3cm} =(\lambda(g,hk)\lambda(k,h)-\lambda(g,k)\lambda(k,h))(g-ghk)\\
&\Longleftrightarrow \forall g,h,k\in G\mbox{ with }hk\neq 1, \:\lambda(g,hk)\lambda(h,k)-\lambda(g,h)\lambda(h,k)\\
&\hspace{5cm} =\lambda(g,hk)\lambda(k,h)-\lambda(g,k)\lambda(k,h).
\end{align*}
Consequently, $\bullet$ makes $\K G$ a Com-PreLie bialgebra if, and only if, the four conditions of Theorem \ref{theo16} are satisfied. \end{proof}

\begin{remark} \begin{enumerate}
\item For any $h\in G$, $\lambda(-,h):G\longrightarrow (\K,+)$ is a group morphism.
\item If $g\in G$ is an element of finite order $n$, then for any $h\in G$,
\begin{align*}
\lambda(g^n,h)&=n\lambda(g,h)=\lambda(1,h)=0.
\end{align*}
As the characteristic of $\K$ is zero, $\lambda(g,h)=0$. 
Consequently, if any element of $G$ is of finite order, then the only product making $\K G$ a Com-PreLie bialgebra is $0$.
\end{enumerate}\end{remark}

\begin{prop}\label{prop17}
Let $G$ be an abelian group. Let $\lambda:G\longrightarrow (\K,+)$ be a group morphism and let $g_0\in G$.
We define a product $\bullet$ on $\K G$ by
\begin{align*}
&\forall g,h\in G,&g\bullet h&=\lambda(g) \delta_{h,g_0}g(1-g_0).
\end{align*}
Then $(\K G,m,\bullet,\Delta)$ is a Com-PreLie bialgebra.
 \end{prop}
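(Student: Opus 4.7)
The plan is to verify the four conditions in Theorem \ref{theo16} for the scalars
\[\lambda(g,h):=\lambda(g)\,\delta_{h,g_0}.\]
First, the degenerate case $g_0=1$ makes the product identically zero (since $1-g_0=0$), which is trivially a Com-PreLie bialgebra structure. So assume $g_0\neq 1$. Then for $h=g_0$ we have $\lambda(g,g_0)(g-gg_0)=\lambda(g)\,g(1-g_0)$, and for $h\neq g_0$ both sides are zero, so the formula for $\bullet$ in the statement matches the normal form $g\bullet h=\lambda(g,h)(g-gh)$ of Theorem \ref{theo16}.

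The condition $\lambda(g,1)=0$ is immediate since $g_0\neq 1$ forces $\delta_{1,g_0}=0$. The additivity condition $\lambda(gh,k)=\lambda(g,k)+\lambda(h,k)$ follows directly from the fact that $\lambda$ is a group morphism into $(\K,+)$: both sides equal $(\lambda(g)+\lambda(h))\delta_{k,g_0}$.

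The preLie compatibility (the third condition of Theorem \ref{theo16}) is handled by case analysis on whether $h$ or $k$ equals $g_0$. If neither equals $g_0$, then $\lambda(h,k)=\lambda(k,h)=0$ and both sides vanish. If exactly one of them equals $g_0$, say $h=g_0$ and $k\neq g_0$, the left-hand side vanishes while the right-hand side reduces to $\lambda(k)\lambda(g)\delta_{g_0k,g_0}=\lambda(k)\lambda(g)\delta_{k,1}$; since $\lambda(1)=0$ this again vanishes. The symmetric subcase is identical. If $h=k=g_0$, then $hk=g_0^2$, and since $g_0\neq 1$ we have $g_0^2\neq g_0$, so $\lambda(g,hk)=0$; both sides then collapse to $-\lambda(g)\lambda(g_0)$ and agree.

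The step I expect to require the most care is the last subcase $h=k=g_0$, which is the only situation where two non-zero terms appear on each side rather than everything vanishing; the symmetry in $(h,k)$ of $\lambda(h,k)$ and $\lambda(k,h)$ in this subcase is what makes the equality hold, and this is the place where it is easiest to lose track of the signs. Once all four subcases are dispatched, Theorem \ref{theo16} yields the conclusion.
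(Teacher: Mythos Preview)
Your proof is correct and follows essentially the same route as the paper: verify the four conditions of Theorem~\ref{theo16} for $\lambda(g,h)=\lambda(g)\,\delta_{h,g_0}$, dispose of the trivial case $g_0=1$, and handle the preLie compatibility by a short case analysis on the Kronecker deltas, using $\lambda(1)=0$ to kill the one potentially nonzero cross-term. The paper organizes the preLie check slightly differently (it first factors the left-hand side as $\lambda(g)\lambda(h)\,\delta_{k,g_0}(\delta_{hk,g_0}-\delta_{h,g_0})$ and then reads off the cases), but the content is the same.
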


\begin{proof} First, note that if $g_0=1$, then $\bullet=0$. We now assume that $g_0\neq 1$.

Here, $\lambda(g,h)=\lambda(g) \delta_{h,g_0}$ for any $g,h\in G$. Let us prove that the conditions of Theorem \ref{theo16} are satisfied.
let $g,h,k\in G$. Then
\begin{align*}
\lambda(g,k)+\lambda(h,k)&=\delta_{k,g_0}(\lambda(g)+\lambda(h))=\delta_{k,g_0}\lambda(gh)=\lambda(gh,k).
\end{align*} 
As $\lambda(1)=0$ and $g_0\neq 1$,
\begin{align*}
\lambda(g,hk)\lambda(h,k)-\lambda(g,h)\lambda(h,k)&=\lambda(g)\lambda(h)\delta_{k,g_0}(\delta_{hk,g_0}-\delta_{h,g_0})\\
&=\begin{cases}
-\lambda(g)\lambda(h) \mbox{ if }h=k=g_0,\\
\lambda(g)\lambda(h)\mbox{ if }h=1\mbox{ and }k=g_0,\\
0\mbox{ otherwise};
\end{cases}\\
&=\begin{cases}
-\lambda(g)\lambda(h) \mbox{ if }h=k=g_0,\\
0\mbox{ otherwise};
\end{cases}\\
&=\lambda(g,hk)\lambda(k,h)-\lambda(g,k)\lambda(k,h).
\end{align*}
So $(\K G,m,\bullet,\Delta)$ is indeed a Com-PreLie bialgebra. \end{proof}

\subsection{Examples on $\Z$}

Our aim here is the classification of all Com-PreLie bialgebra structures on $\K \Z$.
In order to avoid confusion between the sum of $\Z$ and the sum of $\K \Z$, we identify $\K \Z$ with the Laurent polynomial algebra $\K[X,X^{-1}]$,
 with the coproduct defined by $\Delta(X)=X\otimes X$. 
 
 \begin{theo} \label{theo18}
 There are three families of products $\bullet$ on $\K[X,X^{-1}]$ making it a Com-PreLie bialgebra:
 \begin{enumerate}
 \item $\bullet=0$.
 \item There exist $k_0\in \Z$, nonzero, $a\in \K$, nonzero, such that
 \begin{align*}
& \forall k,l\in \Z,&X^k \bullet X^l&=a\delta_{l,k_0} k(X^k-X^{k+l}).
 \end{align*}
 \item There exist $\alpha,\beta \in \K\setminus \{0\}$, such that for any $n\neq -1$, $n\alpha-(n-1)\beta \neq 0$, and $N\geq 1$ such that
  \begin{align*}
& \forall k,l\in \Z,&X^k \bullet X^l&=\begin{cases}
\displaystyle \frac{\alpha\beta}{\left(\frac{l}{N}-1\right)\alpha-\left(\frac{l}{N}-2\right)\beta}k(X^k-X^{k+l})
\mbox{ if }N\mid l,\\
0\mbox{ otherwise}.
\end{cases}
 \end{align*}
 \end{enumerate} \end{theo}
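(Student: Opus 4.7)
The plan is to apply Theorem \ref{theo16} with $G=\Z$, identified via $k\leftrightarrow X^k$. Since $\lambda(-,l):\Z\to(\K,+)$ is a group morphism, we have $\lambda(k,l)=k\mu(l)$, where $\mu(l):=\lambda(1,l)$ satisfies $\mu(0)=0$. Substituting into the preLie constraint of Theorem \ref{theo16} and cancelling the common factor $k$, the problem reduces to classifying all $\mu:\Z\to\K$ with $\mu(0)=0$ such that
\begin{equation}\label{muEQ}
l\,\mu(l')\,[\mu(l+l')-\mu(l)]=l'\,\mu(l)\,[\mu(l+l')-\mu(l')]
\end{equation}
for every $l,l'\in\Z$ with $l+l'\neq 0$. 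The three families in the statement should correspond to the size and shape of the support $S:=\{l\in\Z:\mu(l)\neq 0\}$.

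The cases $S=\emptyset$ and $|S|=1$ are easy: the first gives $\bullet=0$, and for $S=\{k_0\}$ a direct check of (\ref{muEQ}) in the subcases according to which of $l,l',l+l'$ equals $k_0$ shows that any value $\mu(k_0)=a\in\K^*$ is admissible, producing family (2).

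The substantive case is $|S|\geq 2$. Here I extract two consequences of (\ref{muEQ}): (i) if $\mu(l)=0\neq\mu(l')$ and $l\neq 0$, then $\mu(l+l')=0$; (ii) if $\mu(l),\mu(l')\neq 0$ and $\mu(l+l')=0$, then $l=l'$. Reformulated on $S$: for $a,b\in S$ distinct, (i) gives $a-b\in S$, and (ii) gives $a+b\in S$ when $a+b\neq 0$. To upgrade this to "$S\cup\{0\}$ is a subgroup" it remains to show $S$ is stable under negation; this is obtained by the following short trick: given $a\in S$, pick $b\in S\setminus\{a\}$, set $c:=b-a\in S$ by (i); since $a\neq 0$ we have $c\neq b$, and (i) applied to $(c,b)$ yields $c-b=-a\in S$. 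Therefore $S\cup\{0\}=N\Z$ for some $N\geq 1$, i.e.\ $S=N\Z\setminus\{0\}$.

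Having identified $S$, set $\alpha:=\mu(N)$ and $\beta:=\mu(2N)$, and show by induction on $|m|$ that
\[\mu(mN)=\frac{\alpha\beta}{(m-1)\alpha-(m-2)\beta}\]
for every $m\in\Z\setminus\{0\}$. The induction step rewrites (\ref{muEQ}) as $\mu(l+l')=(l-l')/[\nu(l)-\nu(l')]$ with $\nu(l):=l/\mu(l)$ and applies it to the pair $(l,l')=(N,(m-1)N)$; the non-vanishing of the denominator $(m-1)\alpha-(m-2)\beta$ is exactly the hypothesis "$n\alpha-(n-1)\beta\neq 0$ for $n\neq -1$" (via $n=m-1$). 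Conversely, substituting the closed form back into (\ref{muEQ}) and using the factorisations $m(m-1)-m'(m'-1)=(m-m')(m+m'-1)$ and $m(m-2)-m'(m'-2)=(m-m')(m+m'-2)$ yields a routine algebraic verification that family (3) really is preLie.

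The main obstacle is the structural analysis of the support: the bookkeeping around the exclusion $l+l'\neq 0$ in (\ref{muEQ}), and the short but non-obvious argument needed to deduce $-a\in S$ from $a\in S$, form the heart of the proof. Once $S=N\Z\setminus\{0\}$ is in hand, determining $\mu$ on $S$ is an essentially explicit computation.
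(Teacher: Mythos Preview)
Your argument is correct and takes a genuinely different route from the paper's. The paper analyses the positive indices $(a_n)_{n\geq 1}$ and the negative indices $(a_{-n})_{n\geq 1}$ \emph{separately}, obtaining for each side one of three patterns (all zero; a single nonzero value; the rational family), and then runs a $3\times 3$ compatibility check between the two sides to rule out the mixed cases and to show that the periods $N$ and $N'$ must agree. Your approach short-circuits this case analysis: by extracting the two structural consequences (i) and (ii) of the preLie relation you prove directly that the support $S$ (together with $0$) is a subgroup of $\Z$, hence $S=N\Z\setminus\{0\}$, and then solve a single recursion for $\mu$ on $S$. This is conceptually cleaner and avoids the bookkeeping of matching the positive and negative halves; the paper's approach, on the other hand, makes the degeneration from family (3) to family (2) (via $\beta=b_2\to 0$) more visible and reuses the computation already done for graded products on $\K[X]$ in Theorem \ref{theo5}.

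One small point to tighten: your induction on $|m|$ using the pair $(l,l')=(N,(m-1)N)$ works only for $m\geq 3$, since for $m<0$ one has $|m-1|>|m|$ and the inductive hypothesis is unavailable. A clean fix is to first run the induction for $m\geq 1$, then determine $\mu(-N)$ from the pair $(-N,2N)$ (which yields $\mu(N)$ on the left-hand side, already known), and finally induct downward for $m\leq -2$ using the pair $(-N,(m+1)N)$. The closed form and the non-vanishing conditions on $(m-1)\alpha-(m-2)\beta$ then follow exactly as you indicate.
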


\begin{proof} We shall use Theorem \ref{theo16}. Let $\bullet$ be a product on $\K[X,X^{-1}]$, making it a Com-PreLie bialgebra.
Then
\begin{align*}
&\forall k,l\in \Z,&X^k\bullet X^l&=\lambda(k,l)(X^k-X^{k+l}).
\end{align*}
Moreover, for any $l\in \Z$, $\lambda(-,l):(\Z,+)\longrightarrow(\K,+)$ is a group morphism so there exists a scalar $a_l$ such that 
for any $k\in \Z$, $\lambda(k,l)=a_lk$. The conditions of Theorem \ref{theo16} become:
\begin{itemize}
\item $a_0=0$.
\item For any $h,k\in \Z$, such that $h+k\neq 0$,
\begin{align}
\nonumber a_{h+k}a_kh-a_ka_hh&=a_{h+k}a_hk-a_ka_hk\\
\label{EQ8} \Longleftrightarrow a_{h+k}(a_kh-a_hk)&=a_ha_k(h-k).
\end{align}
\end{itemize}

Let us assume that there exists $n\geq 1$, such that $a_n\neq 0$. Let $N=\min\{n\geq 1, a_n\neq 0\}$.
Let us prove that for any $n\geq 1$, if $a_n\neq 0$, then $N\mid n$. Let us write $n=Nq+r$, with $0 \leq r\leq N-1$.
We proceed by induction on $q$. By definition of $N$, $a_0=a_1=\ldots=a_{N-1}=0$, so this proves the result for $q=0$.
Let us assume the result at rank $q-1$. If $r\neq 0$, then by the induction hypothesis, $a_{(q-1)N+r}=0$. Hence, by (\ref{EQ8})
for $h=(q-1)N+r$ and $k=N$,
\begin{align*}
a_{qN+r}a_N((q-1)N+r)&=0.
\end{align*}
As $a_N\neq 0$, $a_{qN+r}=0$.

Let us put $b_n=a_{Nn}$ for any $n\geq 1$. Then $b_1\neq 0$ and, for any $h,k\geq 1$,
\begin{align}
\label{EQ9} b_{h+k}(b_kh-b_hk)&=b_hb_k(h-k).
\end{align}

Let us assume that $b_2=0$. We prove that $b_n=0$ for any $n\geq 2$ by induction on $n$. This is obvious if $n=2$.
If $b_n=0$, with $n\geq 2$, by (\ref{EQ9}), with $h=n$ and $k=1$, $b_{n+1}b_1n=0$. As $b_1\neq 0$, $b_{n+1}=0$.

Let us assume that $b_2\neq 0$. Let us show that for any $n\geq 1$, $(n-1)b_1\neq (n-2)b_2$ and
\begin{align*}
b_n&=\frac{b_1b_2}{(n-1)b_1-(n-2)b_2}.
\end{align*}
This is obvious if $n=1$ or $n=2$, as $b_1,b_2\neq 1$. Let us assume the result at rank $n$, $n\geq 2$.
By (\ref{EQ9}) with $h=n$, $k=1$,
\begin{align*}
b_{n+1}\left(b_1n-\frac{b_1b_2}{(n-1)b_1-(n-2)b_2}\right)&=b_1\frac{b_1b_2}{(n-1)b_1-(n-2)b_2}(n-1)\\
b_{n+1}(n(n-1)b_1-(n-1)^2 b_2)&=b_1b_2(n-1)\\
b_{n+1}(nb_1-(n-1)b_2)=b_1b_2.
\end{align*}
As $b_1,b_2\neq 0$, $nb_1-(n-1)b_2\neq 0$, so
\begin{align*}
b_{n+1}=\frac{b_1b_2}{nb_1-(n-1)b_2}.
\end{align*}

We proved that there are three possibilities for $(a_n)_{n\geq 1}$:
\begin{enumerate}
\item For any $n\geq 1$, $a_n=0$.
\item There exists a unique $N\geq 1$ such that $a_N\neq 0$.
\item There exist $N\geq 1$, $\alpha,\beta \neq 0$ such that for any $n\in \N$, $n\alpha-(n-1)\beta\neq 0$ and
\begin{align*}
a_n&=\begin{cases}
\displaystyle \frac{\alpha \beta}{\left(\frac{n}{N}-1\right)\alpha-\left(\frac{n}{N}-2\right)\beta}\mbox{ if }N\mid n,\\
0\mbox{ otherwise.}
\end{cases}
\end{align*}
\end{enumerate}
Similarly, there are three possibilities for $(a_{-n})_{n\geq 1}$:
\begin{enumerate}
\item[1.'] For any $n\geq 1$, $a_{-n}=0$.
\item[2.'] There exists a unique $N'\geq 1$ such that $a_{-N'}\neq 0$.
\item[3.'] There exist $N'\geq 1$, $\alpha',\beta' \neq 0$ such that for any $n\in \N$, $n\alpha'-(n-1)\beta'\neq 0$ and
\begin{align*}
a_{-n}&=\begin{cases}
\displaystyle \frac{\alpha' \beta'}{\left(\frac{n}{N'}-1\right)\alpha'-\left(\frac{n}{N'}-2\right)\beta'}\mbox{ if }N'\mid n,\\
0\mbox{ otherwise.}
\end{cases}
\end{align*}
\end{enumerate}
If (2. or 3.) and (2.' or 3.') is satisfied, let us assume that $N\neq N'$. For example, we assume that $N'>N$. 
By (\ref{EQ8}) with $h=-N'$ and $k=N$, then
\begin{align*}
a_{N-N'}(-N'a_N-Na_{-N'})=a_{-N'}a_N(-N'-N)\neq 0.
\end{align*}
So $a_{N-N'}\neq 0$: this contradicts the definition of $N'=\min\{n\geq 1,a_{-n}\neq 0\}$. So $N=N'$. \\

Let us assume that ((1'. or 2') and 3.) is satisfied. By (\ref{EQ8}) for $h=3N$ and $k=-2N$,
\begin{align*}
a_N(2Na_{3N})=0.
\end{align*}
We obtain $a_N=0$, so $\alpha=0$: contradiction. So  ((1'. or 2') and 3.)  is impossible. Similarly, (1. or 2) and 3'.) is impossible. 
If (2. and 2.') is satisfied, by (\ref{EQ8}) for $h=N$ and $k=-2N$,
\begin{align*}
a_{-N}(2Na_N)=0.
\end{align*}
As $a_{-N}$ and $a_N$ are both nonzero, this is a contradiction. So (2. and 2.') is impossible.\\

It remains the following cases:

\begin{enumerate}
\item If (1. and 1.') is satisfied, then $\bullet=0$.
\item  If (1. and 2.') or (1.' and 2.) is satisfied, this is the second case of Theorem \ref{theo18}.
\item If (3. and 3.') is satisfied, by (\ref{EQ8}) with $h=2N$ and $k=-N$, $h=3N$ and $k=-2N$, we obtain that
\begin{align*}
\alpha'&=\frac{-\alpha \beta}{2\alpha-3\beta},&\beta'=\frac{-\alpha\beta}{3\alpha-4\beta}.
\end{align*}
Hence, if $n\geq 1$,
\begin{align*}
a_{-nN}&=\frac{\alpha\beta}{(-n-1)\alpha-(-n-2)\beta}.
\end{align*}
This is the third case of Theorem \ref{theo18}.
\end{enumerate}

It remains to prove that this three cases give indeed Com-PreLie bialgebras. It is obvious in the first case.
The second case is Proposition \ref{prop17}. The third case is left to the reader. \end{proof}

\begin{remark}
\begin{enumerate}
\item  In the third case, if $-\alpha+2\beta=0$, the formula for $X^k\bullet X^0$ is not well-defined: by convention, $X^k \bullet 1=0$.
\item Another to present the third case is through the change of variables $\gamma=\alpha-\beta$ and $\delta=-\alpha+2\beta$. Then, if for any $n\neq -1$, $\gamma(n+1)+\delta\neq 0$,
 \begin{align*}
& \forall k,l\in \Z,&X^k \bullet X^l&=\begin{cases}
\displaystyle \frac{(\gamma+\delta)(2\gamma+\delta)}{\frac{l}{N}\gamma+\delta}k(X^k-X^{k+l})
\mbox{ if }N\mid l,\\
0\mbox{ otherwise}.
\end{cases}
 \end{align*}
\end{enumerate}

\end{remark}

\section{Examples of non connected Com-PreLie bialgebras}

Let $H$ be a commutative and cocommutative Hopf algebra. If $\K$ is an algebraically closed field of characteristic zero, denoting by $G$
the group of group-like elements of $H$ and by $V$ the space of its primitive elements, then $H$ is isomorphic to $\K G\otimes S(V)$,
which we shortly denote as $\A$.
We now look for pre-Lie products on $\K G\cdot S(V)$, making it a Com-PreLie bialgebra.

\subsection{Several lemmas on $\A$}

\begin{lemma} \label{lem19}
A 1-cocycle of $S(V)$ is a linear map $\phi:S(V)\longrightarrow S(V)$ such that for any $x\in S(V)$,
\begin{align*}
\Delta \circ \phi(x)&=1\otimes \phi(x)+(\phi \otimes \id_{S(V)})\circ \Delta(x).
\end{align*}
\begin{enumerate}
\item If $\dim(V)\geq 2$, for any 1-cocycle $\phi$ of $S(V)$, there exist $\lambda \in \K$ and $F:S^+(V)\longrightarrow \K$ such that
\begin{align}
\nonumber&&\phi(1)&=0,\\
\label{EQ10}&\forall x\in S^+(V),&\phi(x)&=\lambda x+(F\otimes \id_{S(V)})\circ \tdelta(x).
\end{align}
\item If $\dim(V)=1$, let $(X)$ be a basis of $V$. 
There exist scalars $a$, $\lambda$ and a map $F:S^+(V)\longrightarrow \K$ such that
\begin{align*}
&&\phi(1)&=aX,\\
&\forall n\geq 1,&\phi(X^n)&=a\frac{X^{n+1}}{n+1}+\lambda X^n+\sum_{i=1}^{n-1} \binom{n}{i}F(X^i) X^{n-i}.
\end{align*}
\end{enumerate}
\end{lemma}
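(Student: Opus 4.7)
The strategy is to determine $\phi$ step by step---first $\phi(1)$, then $\phi|_V$, then the higher graded pieces of $\phi$---while building the linear form $F$ along the way. The cocycle identity together with cocommutativity of $\Delta$ does all the work.

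\textbf{The values $\phi(1)$ and $\phi|_V$.} Applying the cocycle identity at $x=1$ yields $\Delta(\phi(1)) = 1 \otimes \phi(1) + \phi(1) \otimes 1$, so $\phi(1) \in V$. Applying it at $x \in V$ gives $\tdelta(\phi(x)) = \phi(1) \otimes x$, and cocommutativity forces $\phi(1) \otimes x = x \otimes \phi(1)$. In case~(1), picking $x \in V$ linearly independent from $\phi(1)$ (possible since $\dim V \geq 2$) yields $\phi(1) = 0$; in case~(2) the constraint is vacuous and $\phi(1) = aX$ is arbitrary. With $\phi(1) = 0$ in case~(1), $\phi|_V$ takes values in $V$, and for $v,w \in V$ the cocycle identity reads $\tdelta(\phi(vw)) = \phi(v) \otimes w + \phi(w) \otimes v$. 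Cocommutativity then forces (first with $v=w$, then with $v,w$ linearly independent) $\phi|_V = \lambda\,\mathrm{Id}_V$ for a single scalar $\lambda$.

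\textbf{Inductive construction of $F$.} Suppose $F$ has been defined on $S^{<n}(V)$ so that $\phi(y) = \lambda y + (F \otimes \mathrm{Id})\tdelta(y)$ for every $y$ of degree $<n$. Using coassociativity of $\tdelta$ one verifies that $\tdelta\bigl(\phi(x) - \lambda x - (F \otimes \mathrm{Id})\tdelta(x)\bigr) = 0$ for $x \in S^n(V)$; since $\varepsilon \circ \phi = 0$ (applying $\mathrm{Id} \otimes \varepsilon$ to the cocycle identity), the residual $r(x) := \phi(x) - \lambda x - (F \otimes \mathrm{Id})\tdelta(x)$ lies in $V$. To complete the induction one shows that, for $x = v_1\cdots v_n$, cocommutativity applied to $\tdelta(\phi(v_1\cdots v_{n+1}))$ forces $r$ to take the polarized form $r(v_1\cdots v_n) = \sum_{i=1}^n F_{n-1}(v_1\cdots \widehat{v_i}\cdots v_n)\,v_i$ for a well-defined symmetric linear form $F_{n-1}$ on $S^{n-1}(V)$; this defines (or refines) $F$ on $S^{n-1}(V)$ and completes the formula at degree~$n$. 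The base case $n=2$ is exactly the claim that $p(v,w) := \phi(vw) - \lambda vw \in V$ takes the form $G(v)w + G(w)v$ for some linear form $G$ on $V$, proved by cocommutativity of $\tdelta(\phi(v_1 v_2 v_3))$; we set $F|_V := G$.

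\textbf{Main obstacle and case (2).} The delicate point throughout is the polarization: a priori the residual $r$ is merely a symmetric multilinear map $V^{\otimes n} \to V$, and cocommutativity must force it into the much more restrictive shape controlled by a single scalar-valued form on $S^{n-1}(V)$. The condition $\dim V \geq 2$ is essential here, as it provides enough independent test vectors to extract the polarization. In case~(2), where $\dim V = 1$, this polarization is vacuous and the cocycle identity is instead solved explicitly by induction on $n$: from $\tdelta(\phi(X^n)) = aX \otimes X^n + \sum_{k=1}^{n-1}\binom{n}{k}\phi(X^k)\otimes X^{n-k}$ one reads off that $\phi(X^n)$ must contain the shift $aX^{n+1}/(n+1)$ (since $X \otimes X^n$ appears in $\tdelta(X^{n+1}/(n+1))$ with coefficient $1$), a primitive freedom $\lambda X^n$, and the lower-degree corrections $\binom{n}{i}F(X^i)X^{n-i}$ coming from the inductive formulas for $\phi(X^k)$ with $k<n$.
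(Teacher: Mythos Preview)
Your proposal is correct and follows essentially the same route as the paper's proof: determine $\phi(1)$ and $\phi|_V$ via cocommutativity, then inductively build $F$ by showing the residual at each degree lies in $V$ and is forced into the polarized form $\sum_i F_{n-1}(v_1\cdots\widehat{v_i}\cdots v_n)\,v_i$; the one-dimensional case is handled by subtracting off the integration cocycle $X^n\mapsto X^{n+1}/(n+1)$. The paper organizes the same argument into separate steps (first checking that \eqref{EQ10} does define a cocycle, then carrying out the polarization with an explicit basis and the projections $e_j^*$), but the content is the same.
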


\begin{proof} \textit{First step.} Let $\lambda \in \K$ and $F:S^+(V)\longrightarrow \K$ be a map.
We consider the map $\phi$ defined on $S(V)$ by  (\ref{EQ10}). Let us prove that it is a 1-cocycle. If $x=1$,
\begin{align*}
\Delta \circ \phi(1)&=0=1\otimes \phi(1)+\phi(1)\otimes 1.
\end{align*}
If $x\in S^+(V)$, then
\begin{align*}
1\otimes \phi(x)+(\phi \otimes \id_{S(V)})\circ \Delta(x)&=1\otimes \phi(x)+\phi(x)\otimes 1+\lambda x'\otimes x''+F(x')x''\otimes x'''\\
&=\Delta(\lambda x+F(x')x'')\\
&=\Delta\circ \phi(x).
\end{align*}

\textit{Second step.} Let $\phi$ be a 1-cocycle and $k\geq 2$ such that $\phi_{\mid S^l(V)}=0$ if $l<k$.
We fix a basis $(e_i)_{i\in I}$ of $V$. Let $x=\prod e_i^{\alpha_i}\in S^k(V)$, with $\sum \alpha_i=k$. Then
\begin{align*}
\Delta \circ \phi(x)&=1\otimes \phi(x)+\phi(x)\otimes 1+\phi(1)\otimes x+\phi(x')\otimes x''=\phi(x)\otimes 1+1\otimes \phi(x),
\end{align*}
so $\phi(x)\in V$. Let us take $j\in J$, we shall consider the linear form defined on $S^{k-1}(V)$ by
\begin{align*}
&\forall x=\prod_{i\in I} e_i^{\beta_i} \in S^{k-1}(V),&F_j(x)&=\frac{1}{\beta_j+1}e_j^* \circ \phi(x e_j).
\end{align*}
Let $\phi_j$ be the 1-cocycle defined in the first section with $\lambda=0$ and $F=F_j$. Then
\begin{align*}
\Delta\circ \phi(xe_j)&=1\otimes \phi(xe_j)+\phi(xe_j)\otimes 1+\sum_{i\in I} \alpha_i \phi\left(\frac{xe_j}{e_i}\right)\otimes e_i+
\phi(x) \otimes e_j.
\end{align*}
As $S(V)$ is cocommutative,
\begin{align*}
(e_j^* \otimes \id_{S(V)})\circ \tdelta\circ \phi(xe_j)&=\sum_{i\in I} \alpha_ie_j^*\circ \phi\left(\frac{xe_j}{e_i}\right)e_i+e_j^*\circ \phi(x) e_j\\
=(\id_{S(V)} \otimes e_j^*)\circ \tdelta\circ \phi(xe_j)&=(\alpha_j+1)\phi(x).
\end{align*}
Hence,
\begin{align*}
\phi(x)&=\sum_{i\neq j} \frac{\alpha_i}{\alpha_j+1} e_j^*\circ \phi\left(\frac{xe_j}{e_i}\right)e_i+e_j^*\circ \phi(x)e_j\\
&=\sum_{i\in I} \alpha_i F_j\left(\frac{xe_i}{e_j}\right) e_i\\
&=\phi_j(x).
\end{align*}
We proved that if $\phi$ is a 1-cocycle such that $\phi_{\mid S^l(V)}=0$ for any $l<k$, with $k\geq 2$, then there exists $F:S^{k-1}(V)\longrightarrow \K$
such that for any $x\in S^k(V)$,
\begin{align*}
\phi(x)&=F(x')\otimes x''.
\end{align*}

\textit{Third step}. Let $\phi$ be a 1-cocycle such that $\phi(1)=0$ and $\phi_{\mid V}=0$.
Let us construct $F_k:S^k(V)\longrightarrow \K$ by induction on $k$, $k\in \N$, such that for any $x\in S^{k+1}(V)$,
\begin{align*}
\phi(x)=((F_1\oplus \ldots \oplus F_k)\otimes \id_{S(V)})\circ \tdelta(x).
\end{align*}
If $k=0$, there is nothing to construct, as $\phi_{\mid V}=0$. Let us assume $F_1,\ldots F_{k-1}$ constructed, with $k\geq 1$.
Let $\psi$ be the 1-cocycle associated in the first step to $\lambda=0$ and $F_1\oplus \ldots \oplus F_{k-1}$.
For any $x\in S^l(V)$, with $l\leq k$, $\phi(x)=\psi(x)$ by the induction hypothesis. By the second step applied to $\phi-\psi$, 
there exists $F_k$ such that for any $x\in S^{k+1}(V)$,
\begin{align*}
\phi(x)-\psi(x)&=F_k(x')x''.
\end{align*}
Hence, for any $x\in S^{k+1}(V)$,
\begin{align*}
\phi(x)=(F_1\oplus\ldots \oplus F_k)(x')x''.
\end{align*}
We proved that for any 1-cocycle $\phi$ such that $\phi(1)=0$ and $\phi_{\mid V}=0$, there exists $F:S^+(V)\longrightarrow \K$
such that for any $x\in S^+(V)$, then $\phi(x)=F(x')x''$.\\

\textit{Fourth step.} Let us consider a 1-cocycle such that $\phi(1)=0$. Let $x\in V$. 
\begin{align*}
\Delta\circ \phi(x)&=1\otimes  \phi(x)+\phi(x)\otimes 1, 
\end{align*}
so $\phi(x)\in V$. If $V$ is 1-dimensional, there exists $\lambda$ such that $\phi_{\mid V}=\lambda \id_V$.
If $\dim(V)\geq 2$, for any $x\in V$,
\begin{align*}
\Delta \circ \phi(x^2)&=1\otimes \phi(x^2)+\phi(x^2)\otimes 1+2\phi(x)\otimes x.
\end{align*}
As $S(V)$ is cocommutative, $\phi(x)$ and $x$ are colinear for any $x\in V$. Hence, there exists $\lambda\in \K$ such that $\phi_{\mid V}=\lambda \id_V$.
Consequently, if $\psi$ is the 1-cocycle associated to $\lambda$ and $F=0$, then $\psi(1)=\phi(1)=0$ and $\phi_{\mid V}=\psi_{\mid V}=\lambda \id_V$.
By the third step applied to $\phi-\psi$, there exists $F$ such that (\ref{EQ10}) holds.\\

\textit{Fifth step}. Let $\varphi$ is a 1-cocycle of $S(V)$, with $\dim(V)\geq 2$. Then
\begin{align*}
\Delta\circ \phi(1)&=\phi(1)\otimes 1+1\otimes \phi(1),
\end{align*}
so $\phi(1)\in V$. Let $x\in V$.
\begin{align*}
\Delta \circ \phi(x)&=1\otimes \phi(x)+\phi(x)\otimes 1+\phi(1)\otimes x.
\end{align*}
As $\Delta$ is cocommutative, $\phi(1)$ and $x$ are colinear, for any $x\in V$. As $\dim(V)\geq 2$, $\phi(1)=0$.
Combined with the fourth step, we obtain point 1.\\

\textit{Last step.} We prove the first assertion of point 2. Let us consider the map $\psi:S(V)\longrightarrow S(V)$, defined by
\begin{align*}
\psi(X^n)&=\frac{X^{n+1}}{n+1}.
\end{align*}
It is a 1-cocycle of $S(V)$, with $V=\vect(X)$.
If $\phi$ is 1-cocycle of $S(V)$, then $\phi(1)$ is a primitive element of $S(V)$, so belong to $\vect(X)$.
Hence, there exists $a\in \K$ such that $\phi(1)=\lambda X$. Then $\phi-a\psi$ is a 1-cocycle of $S(V)$, vanishing on $1$.
We conclude with the fourth step. \end{proof}

\begin{lemma} \label{lem20}
In $\A$:
\begin{enumerate}
\item Let $g,h\in G$, with $g\neq h$.
\begin{align*}
\{x\in \A \mid \Delta(x)=g\otimes x+x\otimes h\}=\vect(g-h).
\end{align*}
\item Let $g\in G$.
\begin{align*}
\{x\in \A\mid \Delta(x)-g\otimes x-x\otimes g\in (\K G\cdot S^+(G))^{\otimes 2}\}=g S^+(V)
\end{align*}\end{enumerate}\end{lemma}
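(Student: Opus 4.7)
Write any $x\in\AA$ uniquely as $x=\sum_{k\in G} k\cdot x_k$ with $x_k\in S(V)$, almost all zero. Since $k$ is grouplike and $\Delta$ is the tensor product coproduct,
\[\Delta(x)=\sum_{k\in G}(k\otimes k)\,\Delta_{S(V)}(x_k),\]
and the decomposition $\AA\otimes \AA=\bigoplus_{(k_1,k_2)\in G^2}(k_1\otimes k_2)\cdot(S(V)\otimes S(V))$ is a direct sum. The strategy for both parts is the same: compare the $(k_1,k_2)$-components of the defining equation and read off the constraints on each $x_k$. No serious obstacle is expected; the whole argument is bookkeeping in this basis.

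For point 1, with $g\neq h$, the component analysis of $\Delta(x)=g\otimes x+x\otimes h$ yields: for $(k,k)$ with $k\notin\{g,h\}$, $\Delta_{S(V)}(x_k)=0$, hence $x_k=0$; for $(g,g)$, $\Delta_{S(V)}(x_g)=1\otimes x_g$, forcing $x_g\in\K$; symmetrically $x_h\in\K$; for $(k,h)$ or $(g,k)$ with $k$ outside $\{g,h\}$ the constraint is again $x_k=0$; and the $(g,h)$-component gives $0=1\otimes x_h+x_g\otimes 1$, i.e.\ $x_g+x_h=0$. Thus $x=x_g(g-h)\in Vect(g-h)$. The reverse inclusion is the computation $\Delta(g-h)=g\otimes(g-h)+(g-h)\otimes h$.

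For point 2, the inclusion $gS^+(V)\subseteq\{\ldots\}$ follows from $\Delta(gy)=g\otimes gy+gy\otimes g+(g\otimes g)\tdelta(y)$ for $y\in S^+(V)$. For the reverse, I proceed component-wise again. For $k\neq g$, the $(g,k)$- and $(k,g)$-components give $-1\otimes x_k,\ -x_k\otimes 1\in S^+(V)^{\otimes 2}$, hence $x_k\in S^+(V)$; meanwhile the $(k,k)$-component gives $\Delta_{S(V)}(x_k)\in S^+(V)^{\otimes 2}$. But for $y\in S^+(V)$, $\Delta_{S(V)}(y)=1\otimes y+y\otimes 1+\tdelta(y)$ with $\tdelta(y)\in S^+(V)^{\otimes 2}$, so $\Delta_{S(V)}(y)\in S^+(V)^{\otimes 2}$ forces $1\otimes y+y\otimes 1=0$, whence $y=0$. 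Thus $x_k=0$ for $k\neq g$.

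For the $(g,g)$-component, writing $x_g=c+y$ with $c=\epsilon(x_g)\in\K$ and $y\in S^+(V)$, a direct expansion gives
\[\Delta_{S(V)}(x_g)-1\otimes x_g-x_g\otimes 1=-c(1\otimes 1)+\tdelta(y).\]
Since $\tdelta(y)\in S^+(V)^{\otimes 2}$ and the left-hand side must also lie in $S^+(V)^{\otimes 2}$, we conclude $c=0$, so $x_g\in S^+(V)$ and $x=g\cdot x_g\in gS^+(V)$, completing the proof.
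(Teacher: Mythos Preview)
Your proof is correct. In part 2 the step ``$-1\otimes x_k\in S^+(V)^{\otimes 2}$, hence $x_k\in S^+(V)$'' is slightly understated: in fact $1\otimes x_k\in S^+(V)\otimes S^+(V)$ already forces $x_k=0$ outright, since $1\notin S^+(V)$. Your subsequent detour through the $(k,k)$-component is therefore redundant, but harmless.

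The route you take differs from the paper's. You decompose $\A\otimes\A$ along the full $G\times G$-bigrading $\bigoplus_{(k_1,k_2)}(k_1\otimes k_2)\cdot(S(V)\otimes S(V))$ and read off all constraints directly. The paper instead works with the coarser projections $\varpi_k:\A\to\K k$ onto the individual group-like lines and, for part 1, exploits cocommutativity of $\Delta$ to equate $(\varpi_g\otimes Id)\circ\Delta(x)$ with the flip of $(Id\otimes\varpi_g)\circ\Delta(x)$, yielding $x=\alpha(g-h)$ in one line. Your argument is more pedestrian but also more self-contained: it never invokes cocommutativity, so it would go through verbatim for $\K G\otimes C$ with $C$ any connected coalgebra, whereas the paper's shortcut for part 1 genuinely uses the symmetry of $\Delta$. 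For part 2 the two approaches are closer in spirit; both amount to isolating the $h$-components of $x$ and showing only $h=g$ survives with $x_g\in S^+(V)$.
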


\begin{proof} For both points, the inclusion $\supseteq$ is trivial. Note that
\begin{align*}
\A&=\K G\oplus \K G\cdot S^+(V)=\bigoplus_{k\in G} \K k \oplus \K G\cdot S^+(V).
\end{align*}
For any $k\in G$, we denote by $\varpi_k$ the canonical projection on $\K k$ in this direct sum.\\

1. Let $x\in \A$, such that $\Delta(x)=g\otimes x+x\otimes h$. We put $\varpi_g(x)=\alpha g$. By cocommutativity:
\begin{align*}
(\varpi_g\otimes \id)\circ \Delta(x)&=g\otimes x+\alpha g \otimes h
=(\id \otimes \varpi)\circ \Delta(x)=\alpha g\otimes g+0.
\end{align*}
Hence, $x+\alpha h=\alpha g$, so $x=\alpha(g-h)$.\\

2. We put
\begin{align*}
x&=\sum_{h\in G} hx_h,
\end{align*}
where $x_h\in S(V)$ for any $h\in G$. For any $h\in G$, let us put $\varpi_h(x)=\alpha_h h$. 
\begin{align*}
\Delta(x)&=\sum_{h\in G} h x_h^{(1)}\otimes h x_h^{(2)},\\
(\varpi_h \otimes \id)\circ \Delta(x)&=\delta_{g,h} g \otimes x+\alpha_h h\otimes g=h\otimes x_h.
\end{align*}
For $h\neq g$, we obtain $hx_h=\alpha_h g$, so $x_h=0$ and $\alpha_h=0$. Hence, $x=gx_g\in gS(V)$.
For $h=g$, we obtain $g\otimes gx_g=g\otimes x+\alpha_g g\otimes g$, so $\alpha_g=0$, and $x\in gS^+(V)$. \end{proof}

\subsection{PreLie products on $\A$}

\begin{prop} \label{prop21}
Let $\bullet$ be a product on $\A$, making it a Com-PreLie bialgebra.
\begin{enumerate}
\item For $g\in G\setminus\{1\}$, for any $v\in S(V)$, for any $w\in S(V)$, $v\bullet gw=0$.
\item $S(V)$ is a pre-Lie subalgebra of $\A$.
\item If $\dim(V)\geq 2$ or if the pre-Lie product is nonzero on $S(V)$, then $\K G$ is a pre-Lie subalgebra of $\A$.
\end{enumerate}\end{prop}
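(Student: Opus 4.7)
The unifying tool is the $G$-grading $\A=\bigoplus_{h\in G} hS(V)$, which is respected by the coproduct in the strong sense that $\Delta(hS(V))\subseteq hS(V)\otimes hS(V)$. Combining this with the Com-PreLie compatibility
\[\Delta(a\bullet b)=a^{(1)}\otimes(a^{(2)}\bullet b)+(a^{(1)}\bullet b^{(1)})\otimes a^{(2)}b^{(2)}\]
will restrict where $a\bullet b$ can live as soon as the grades of $a$ and $b$ are known, and this is the heart of Parts 1 and 2.

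I would start with Part 2, which is cleaner. For $v,w\in S(V)$, both terms on the right-hand side above produce tensors in $S(V)\otimes\A+\A\otimes S(V)$, since $v^{(i)},w^{(j)}\in S(V)$. Decomposing $v\bullet w=\sum_h hp_h$ with $p_h\in S(V)$, the summand $(h\otimes h)\Delta(p_h)$ lives in $hS(V)\otimes hS(V)$, which meets $S(V)\otimes\A+\A\otimes S(V)$ trivially when $h\neq 1$. By injectivity of $\Delta$, $p_h=0$ for $h\neq 1$, proving $v\bullet w\in S(V)$.

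For Part 1, the same argument applied to $b=gw\in gS(V)$ confines $v\bullet gw$ to $S(V)\oplus gS(V)$: write $v\bullet gw=q_1+gq_g$. To kill these two residual pieces I would induct on $|w|$ with $v\in V$ primitive as the generic case, then extend to $v\in S(V)$ by Leibniz. The base $w=1$ gives $\Delta(v\bullet g)=1\otimes(v\bullet g)+(v\bullet g)\otimes g$, so Lemma~\ref{lem20}(1) yields $v\bullet g=\alpha_v(1-g)$; to force $\alpha_v=0$, I plan to combine the preLie identity on $(v,g,g')$ (which after computation reduces both sides to multiples of $(1-gg')$ with explicit coefficients in the various $\alpha$'s) with the grading-induced constraint that $g\bullet v\in gS(V)$ and the Part 2 statement $v\bullet v'\in S(V)$. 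For the inductive step, the coproduct formula expresses the reduced coproducts of the $1$- and $g$-components of $v\bullet gw$ in terms of the already-vanishing lower-degree $v\bullet gu_S$, pinning both to scalars and then to zero via the augmentation identity $\varepsilon(a\bullet b)=0$.

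For Part 3, the coproduct gives $\Delta(g\bullet h)=g\otimes(g\bullet h)+(g\bullet h)\otimes gh$, so Lemma~\ref{lem20}(1) immediately yields $g\bullet h\in Vect(g-gh)\subseteq\K G$ whenever $h\neq 1$; only $g\bullet 1$ remains. Its coproduct places it in $gV$, say $g\bullet 1=gv_g$. When $\dim V\geq 2$, computing $\Delta(g\bullet w_1)$ for an arbitrary primitive $w_1\in V$ forces the reduced coproduct of the associated $S(V)$-element to equal $v_g\otimes w_1$; cocommutativity then demands $v_g$ be colinear to every $w_1\in V$, so $v_g=0$. Under the alternative hypothesis that $\bullet$ is nontrivial on $S(V)$, Theorem~\ref{theo9} forces $S(V)\cong\g^{(1)}(1,a,1)$ for some $a\neq 0$; inserting the resulting explicit formulas into the preLie identity on a well-chosen triple involving $g,\,1$ and an element of $S(V)$ on which $\bullet$ acts nontrivially again forces $v_g=0$. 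The main obstacle is the killing of $\alpha_v$ in Part 1: grading plus coproduct already narrow $v\bullet g$ into a one-dimensional subspace, and eliminating the coefficient requires a genuine interaction between the preLie identity and the structural constraints on $g\bullet v$ and $v\bullet v'$, rather than the purely linear-algebraic reasoning that handles the rest of the proposition.
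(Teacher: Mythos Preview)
Your grading argument for Part 2 is correct and in fact cleaner than the paper's degree induction: confining $\Delta(v\bullet w)$ to $S(V)\otimes\A+\A\otimes S(V)$ and then reading off the $(h,h)$-components for $h\neq 1$ kills everything in one stroke.

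The real problem is Part 1. Your induction correctly reaches $v\bullet gw\in Vect(1-g)$, but neither of your two proposed mechanisms for killing the coefficient works. The augmentation identity gives nothing: $\varepsilon(1-g)=0$, so $\varepsilon(v\bullet gw)=0$ is automatic and carries no information. The preLie identity on $(v,g,g')$ only produces the relation $\beta_{g,g'}(\alpha_{v,gg'}-\alpha_{v,g})=\beta_{g',g}(\alpha_{v,gg'}-\alpha_{v,g'})$ where $g\bullet g'=\beta_{g,g'}(g-gg')$; since the $\beta$'s are themselves unknown and could all vanish, this does not force $\alpha_v=0$. The paper's device, which you are missing, is purely coalgebraic and uniform over the whole induction: having shown $v\bullet gw=\lambda(v\otimes w)(g-1)$ for $w\in S^k(V)$, one computes $\Delta(v\bullet gwe_i)$ for the degree $k+1$ element $we_i$. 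The term $(\alpha_i+1)\lambda(v\otimes w)(g-1)\otimes ge_i$ appears, and cocommutativity (via the projection $\varpi$ onto $\K G\cdot S^+(V)$ and then $\varpi_g$ onto $\K g$) forces it to vanish. No preLie identity is needed anywhere in Part 1.

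A smaller issue in Part 3: when $\dim V=1$ and $\bullet$ is nonzero on $S(V)$, Theorem~\ref{theo9} does not force $S(V)\cong\g^{(1)}(1,a,1)$; it also allows $S(V,f,\lambda)$, which in dimension one is $\g'(\lambda,\mu)$. The paper treats both cases via Proposition~\ref{prop14} and applies the preLie identity on $(X,g,1)$ versus $(X,1,g)$, using the already-established Part~1 to get $X\bullet g=0$, which then isolates the term $\alpha(g)\,X\bullet X$ and forces $\alpha(g)=0$ in either case.
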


\begin{proof}
1. By the Leibniz identity, it is enough to prove it for $v\in V$. For any $k\in \N$, let us prove by induction that
for any $w\in S^l(V)$, with $l<k$, $v\bullet gw=0$. It is trivial if $k=0$. Let us assume the result at rank $k$, $k\in \N$. Let $x\in S^k(V)$.
\begin{align*}
\Delta(v\bullet gw)&=1\otimes v\bullet gw+v\bullet gw^{(1)}\otimes gw^{(2)}\\
&=\begin{cases}
1\otimes v\bullet gw+v\bullet gw\otimes g\mbox{ if }k=0,\\
1\otimes v\bullet gw+v\bullet gw\otimes g+\underbrace{v\bullet g\otimes gw+v\bullet gw'\otimes w''}_{=0\mbox{\scriptsize{ by induction hypothesis}}}
\mbox{ if }k\geq 1.
\end{cases}
\end{align*}
By Lemma \ref{lem20}, there exists a scalar $\lambda(v\otimes w)$ such that $v\bullet gw=\lambda(v\otimes w)(g-1)$.

Let $(e_i)_{i\in I}$ be a basis of $V$ and $w=\prod e_i^{\alpha_i}\in S^k(V)$, with $\sum \alpha_i=k$. For any $i\in I$, 
\begin{align*}
\Delta(v\bullet gwe_i)&=1\otimes v\bullet gwe_i+v\bullet gwe_i\otimes g+\sum_{j\neq i} \alpha_j \lambda\left(v\otimes \frac{we_i}{e_j}\right)
(g-1)\otimes ge_j\\
&+(\alpha_i+1)\lambda(v\otimes w)(g-1)\otimes ge_i.
\end{align*}
Let $\varpi$ be the projector on $\K G\cdot S^+(V)$ which vanishes on $\K G$. By cocommutativity,
\begin{align*}
(\varpi \otimes \id_{\K G\cdot S^+(V)})\circ \Delta(v\bullet gwe_i)&=\varpi(g\otimes we_i) \otimes g\\
=(\id_{\K G\cdot S^+(V)} \otimes \varpi)\circ \Delta(v\bullet gwe_i)&=\sum_{j\neq i} \alpha_j \lambda\left(v\otimes \frac{we_i}{e_j}\right)(g-1)\otimes ge_j\\
&+(\alpha_i+1)\lambda(v\otimes w)(g-1)\otimes ge_i.
\end{align*}
Applying $\varpi_g$, as $\varpi_g \circ \varpi=0$, we obtain that
\begin{align*}
0&=(\varpi_g\otimes \id_{\K G\cdot S^+(V)}) \circ (\varpi \otimes \id_{\K G\cdot S^+(V)})\circ \Delta(v\bullet gwe_i)\\
&=\sum_{j\neq i} \alpha_j \lambda\left(v\otimes \frac{we_i}{e_j}\right)g\otimes ge_j+(\alpha_i+1)\lambda(v\otimes w)g\otimes ge_i.
\end{align*}
Hence, $\lambda(v\otimes w)=0$, so $v\bullet gw=0$.\\

2. By the Leibniz identity, it is enough to prove that $v\bullet w\in S^+(V)$ for any $v\in V$, $w\in S^k(V)$, $k\in \N$, by induction on $k$.
If $k=0$, then
\begin{align*}
\Delta(v\bullet 1)&=v\bullet 1\otimes 1+1\otimes v\bullet 1.
\end{align*}
By Lemma \ref{lem20}, $v\bullet 1\in S^+(V)$. Let us assume the result at all ranks $<k$, with $k\geq 1$.
Then
\begin{align*}
\Delta(v\bullet w)&=v\bullet w\otimes 1+1\otimes v\bullet w+\underbrace{v\bullet w'\otimes w''}
_{\in S^+(V)^{\otimes 2}\mbox{\scriptsize{ by the induction hypothesis}}}.
\end{align*}
By Lemma \ref{lem20}, $v\bullet w\in S^+(V)$.\\

3. Let $g,h\in G$. 
\begin{align*}
\Delta(g\bullet h)&=g\otimes g\bullet h+g\bullet h\otimes gh.
\end{align*}
If $h\neq 1$, by Lemma \ref{lem20}, $g\bullet h\in \prim(g-gh)\subseteq \K G$.
If $h=1$, then $g^{-1}(g\bullet 1)$ is a primitive element of $\A$, so belong to $V$. Let us put $g\bullet 1=gx_g$, with $x_g\in V$.

Let us assume that $\dim(V)\geq 2$. For any $y\in V$,
\begin{align*}
\Delta(g\bullet y)&=g\otimes g\bullet y+g\bullet y\otimes g+g x_g\otimes gy.
\end{align*}
By cocommutativity, $x_g$ and $y$ are colinear, for any $y\in V$. As $\dim(V)\geq 2$, necessarily $x_g=0$, so $g\bullet 1=0\in \K G$. 

Let us assume that $V$ is 1-dimensional, and that the restriction of $\bullet$ to $S(V)$ is nonzero. 
Up to an isomorphism, we replace $S(V)$ by $\K[X]$. By Proposition \ref{prop14},
there are two possibilities.
\begin{enumerate}
\item There exist $\lambda,\mu \in \K$, such that for any $k,l\in \N$,
\begin{align*}
X^k\bullet X^l&=\lambda k l! \sum_{i=1}^{k+l-1}\frac{\mu^{k+l-i-1}}{(i-k+1)!}X^i.
\end{align*}
\item There exists $\lambda \in \K$, 
such that for any $k,l\in \N$,
\begin{align*}
X^k\bullet X^l&=\lambda \frac{k}{l+1}X^{k+l}.
\end{align*}
\end{enumerate}
As $\bullet$ is nonzero, in both cases $\lambda \neq 0$. For any $g\in G$, $g^{-1}(g\bullet 1)\in V=\vect(X)$,
we put $g\bullet 1=\alpha(g) gX$. If $g=1$, as $1\bullet 1=0$, $\alpha(g)=0$. Let us assume that $g\neq 1$. 
Then, by the first point,
\begin{align*}
(X\bullet g)\bullet 1&=0;\\
X\bullet (g\bullet 1)&=\alpha(g) X\bullet X=\begin{cases}
\alpha(g)\lambda X\mbox{ (first case)},\\
\displaystyle \alpha(g) \frac{\lambda}{2}X^2\mbox{ (second case)};
\end{cases}\\
(X\bullet 1)\bullet g&=0,\\
X\bullet (1\bullet g)&=0.
\end{align*}
By the pre-Lie identity, as $\lambda\neq 0$, $\alpha(g)=0$. So $g\bullet 1=0\in \K G$. \end{proof}

\begin{prop}\label{prop22}
Let $\bullet$ be a product on $\A$, making it a Com-PreLie bialgebra.
We assume that $\K G$ is a sub-pre-Lie algebra of $\A$. 
\begin{enumerate}
\item For any $g\in G$, there exist $\lambda(g)\in \K$ and $F_g:S^+(V)\longrightarrow \K$ such that for any $v \in S^+(V)$,
\begin{align*}
g\bullet v=\lambda(g) gv+F_g(v')gv''.
\end{align*}
Moreover, for any $g,h\in G$,
\begin{align*}
\lambda(gh)&=\lambda(g)+\lambda(h),&F_{gh}&=F_g+F_h.
\end{align*}
\item Recall that $S(V)$ is a sub-pre-Lie algebra of $\A$. It is equal to $S(V,f,\lambda)$, with $f\neq 0$, then for any $g\in G$,
there exists $\mu(g)\in \K$ such that for any $x_1,\ldots,x_n \in V$,
\begin{align*}
F_g(x_1\ldots x_n)&=\mu(g) n!\lambda^{n-1} f(x_1)\ldots f(x_n).
\end{align*}
Moreover, for any $g,h\in G$,
\begin{align*}
\mu(gh)&=\mu(g)+\mu(h).
\end{align*}
\item If $(S(V),\bullet)=S(V,f,\lambda)$, with $f\neq 0$, then for any $g,h\in G\setminus\{1\}$, for any $v\in S^+(V)$,
\begin{align*}
g\bullet hv&=-\lambda(g,h)ghv.
\end{align*}
\end{enumerate}\end{prop}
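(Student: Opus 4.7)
The plan is to handle the three parts in order, using the coproduct axiom (via Lemma~\ref{lem20}) and the preLie identity as the main tools. For Part~1, I would first show $g\bullet 1=0$: the coproduct axiom gives $\Delta(g\bullet 1)=g\otimes(g\bullet 1)+(g\bullet 1)\otimes g$, so by Lemma~\ref{lem20}(2), $g\bullet 1\in gS^+(V)$; intersecting with $\K G$ (which is sub-preLie by hypothesis) yields $(0)$. Inductively on $\deg v$, combining the coproduct axiom with the inductive hypothesis $g\bullet v'\in gS^+(V)$ places $\Delta(g\bullet v)-g\otimes(g\bullet v)-(g\bullet v)\otimes g$ in $(\K G\cdot S^+(V))^{\otimes 2}$, so Lemma~\ref{lem20}(2) gives $g\bullet v=g\phi_g(v)$ for a well-defined $\phi_g:S^+(V)\to S^+(V)$ with $\phi_g(1)=0$. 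The coproduct axiom rewrites as $\Delta\phi_g=(\phi_g\otimes\Id)\Delta+1\otimes\phi_g$, so Lemma~\ref{lem19} produces $\lambda(g)\in\K$ and $F_g:S^+(V)\to\K$ with $\phi_g(v)=\lambda(g)v+F_g(v')v''$. The Leibniz identity $(gh)\bullet v=(g\bullet v)h+g(h\bullet v)$ rewrites as $\phi_{gh}=\phi_g+\phi_h$, and the uniqueness of the parameters in Lemma~\ref{lem19} yields $\lambda(gh)=\lambda(g)+\lambda(h)$ and $F_{gh}=F_g+F_h$.

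For Part~2, the preLie identity applied to $(g,b,c)$ with $b,c\in S^+(V)$ reduces, after cancelling the common factor of $g$ and using Lemma~\ref{lem3} to rewrite $u\bullet v=\partial(u)\phi(v)$ in $S(V,f,\lambda)$, to the symmetry $\phi_g(b)\bullet c-\phi_g(b\bullet c)=\phi_g(c)\bullet b-\phi_g(c\bullet b)$. I would take $b=x\in V$ and $c=y_1y_2\in S^2(V)$ and match coefficients to get $f(x)F_g(y)=f(y)F_g(x)$ for all $x,y\in V$; since $f\neq 0$, this gives $F_g|_V=\mu(g)f$ for a scalar $\mu(g)$. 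Then, inductively on $n\geq 2$, I would take $b=x\in V$ and $c=x_1\ldots x_n\in S^n(V)$, expand both sides using Lemma~\ref{lem3} and the inductive hypothesis on $F_g$ in degrees $<n$, and extract the single remaining unknown $F_g(x_1\ldots x_n)$ from the top-degree component, obtaining $F_g(x_1\ldots x_n)=\mu(g)n!\lambda^{n-1}f(x_1)\ldots f(x_n)$. The additivity $\mu(gh)=\mu(g)+\mu(h)$ then follows from $F_{gh}=F_g+F_h$ applied to any $x\in V$ with $f(x)\neq 0$.

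For Part~3, the coproduct axiom gives $\Delta(g\bullet hv)=g\otimes(g\bullet hv)+(g\bullet hv)\otimes gh+\lambda(g,h)(g-gh)\otimes ghv+(g\bullet hv')\otimes ghv''$. Inductively on $\deg v$, assuming $g\bullet hv'=-\lambda(g,h)ghv'$ for $\deg v'<\deg v$, the element $W(v)=g\bullet hv+\lambda(g,h)ghv$ satisfies $\Delta(W(v))=g\otimes W(v)+W(v)\otimes gh$, so by Lemma~\ref{lem20}(1) (using $g\neq gh$ since $h\neq 1$), $W(v)=\alpha(v)(g-gh)$ for a scalar $\alpha(v)$. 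To force $\alpha(v)=0$, I would apply the preLie identity for $(g,h,v)$: using $g\bullet h=\lambda(g,h)(g-gh)$, $h\bullet v=\lambda(h)hv+F_h(v')hv''$, $v\bullet h=0$ (by Proposition~\ref{prop21}(1)), and the formulas from Parts~1--2, the identity collapses (after the inductive simplification $\alpha|_{S^{<\deg v}}=0$) to $(g-gh)[\lambda(h)\alpha(v)+F_h(v')\alpha(v'')]=0$, hence $\lambda(h)\alpha(v)+F_h(v')\alpha(v'')=0$. When $\lambda(h)\neq 0$, the inductive hypothesis kills the sum and gives $\alpha(v)=0$ at all degrees. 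When $\lambda(h)=0$ but $\mu(h)\neq 0$, applying the same identity to $w\in S^{n+1}(V)$ reduces to $\mu(h)\alpha(\partial(w))=0$; the surjectivity of $\partial:S^{n+1}(V)\to S^n(V)$ (using $f\neq 0$) then forces $\alpha|_{S^n}=0$ by induction on $n$. In the fully degenerate case $\lambda(h)=\mu(h)=0$ (where $h\bullet S^+(V)=0$), one picks an auxiliary $h'\in G$ with $\lambda(h')$ or $\mu(h')$ nonzero and runs the preLie identity for $(g,hh',v)$, invoking the already-established value of $g\bullet hh'v$. The main obstacle is precisely this last subcase, where the preLie identity involving $h$ alone is degenerate and one must carefully select auxiliary group elements to pin down $\alpha$.
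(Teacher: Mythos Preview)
For Parts~1 and~2 your approach is essentially the paper's: the paper also sets $\phi_g(v)=g^{-1}(g\bullet v)$, observes $\phi_g(1)=0$, checks the 1-cocycle equation, and invokes Lemma~\ref{lem19}; for Part~2 it likewise applies the preLie identity to $(g,x,y)$ with $x,y\in S^+(V)$, though it extracts the scalar relations via a fixed basis $(e_i)$ and the identity $F_g(e_{i_0}\bullet z)=F_g(z\bullet e_{i_0})$ rather than your inductive degree-matching. Both routes are fine.

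For Part~3 there is a genuine gap. Your inductive scheme breaks when $\lambda(h)=0$: under the hypothesis $\alpha|_{S^{<n}}=0$, the preLie identity for $(g,h,v)$ with $v\in S^n(V)$ collapses to $\lambda(h)\alpha(v)=0$ (your term $F_h(v')\alpha(v'')$ is already zero by the same hypothesis), which yields nothing. Your proposed fix---passing to $w\in S^{n+1}(V)$---is circular: to write $g\bullet hw$ in the form $-\lambda(g,h)ghw+\alpha(w)(g-gh)$ you need $\alpha|_{S^{\leq n}}=0$, which is exactly the claim at stake. And invoking an auxiliary $h'$ with $\lambda(h')\neq 0$ or $\mu(h')\neq 0$ is unjustified, since both group morphisms may vanish identically on $G$.

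The paper avoids this by two changes. First, it derives the \emph{non-inductive} formula $g\bullet hv=-\lambda(g,h)ghv+\alpha(v)(g-gh)-\alpha(v')ghv''$ directly from cocommutativity and the projection $\varpi_g$. Second, and crucially, it applies the preLie identity to the triple $(g,\,hx,\,y)$ with $x,y\in S^+(V)$ (not $(g,h,v)$), and projects onto $\K G$ to obtain
\[
\alpha\bigl(\lambda(h)\,xy+F_h(y')\,xy''+x\bullet y\bigr)=0.
\]
When $\lambda(h)=0$ and $y\in V$ this gives $\alpha(x\bullet y)=0$ for all $x\in S^+(V)$, $y\in V$; since $f\neq 0$ one checks directly that $S^+(V)\bullet V=S^+(V)$, hence $\alpha=0$. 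No auxiliary group elements are needed. The step you are missing is to put an $S^+(V)$-factor on the $h$-side of the preLie triple, so that the relation controls $\alpha$ on the subspace $S^+(V)\bullet V$ rather than merely giving $\lambda(h)\alpha(v)=0$.
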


\begin{proof} 1. Let us consider the map
\begin{align*}
\phi_g&:\left\{\begin{array}{rcl}
S(V)&\longrightarrow&\A\\
x&\longrightarrow&g^{-1}(g\bullet x).
\end{array}\right.
\end{align*}
As $\K G$ is a sub-pre-Lie algebra, by Theorem \ref{theo16}, $g\bullet 1=\lambda(g,1)(g-g)=0$, so $\phi_g(1)=0$. 
Moreover, for any $v\in S(V)$,
\begin{align*}
\Delta \circ \phi(v)&=(g^{-1}\otimes g^{-1)})(g\otimes g\bullet v+g\bullet v^{(1)}\otimes g v^{(2)})\\
&=1\otimes \phi_g(v)+\phi_g(v^{(1)})\otimes v^{(2)}.
\end{align*}
By Lemma \ref{lem20}, $\phi_g(v)\in S(V)$, and $\phi_g$ is a 1-cocycle of $S(V)$.
Lemma \ref{lem19} gives the existence of $\lambda(g)$ and $F_g$. 

If $g,h\in G$, for any $v\in S^+(V)$,
\begin{align*}
gh\bullet v&=\lambda(gh)gv+F_{gh}(v')gv''\\
&=(g\bullet v)h+g(h\bullet v)\\
&=(\lambda(g)+\lambda(h))ghv+(F_g(v')+F_h(v'))ghv''.
\end{align*}
Hence,$ \lambda(gh)=\lambda(g)+\lambda(h)$ and $F_{gh}=F_g+F_h$.\\

2. Let $x,y\in S^+(V)$.
\begin{align*}
(g\bullet x)\bullet y&=\lambda(g)^2gxy+\lambda(g)F_g(y')gxy''+\lambda(g)F_g(x')gx''y+F_g(x')F_g(y')gx''y''\\
&+\lambda(g)g(x\bullet y)+F_g(x')g(x''\bullet y),\\
g\bullet (x\bullet y)&=\lambda(g)g(x\bullet y)+F_g(x'\bullet y)gx''+F_g(x\bullet y')gy''+F_g(x'\bullet y')gx''y''+F_g(x')g(x''\bullet y).
\end{align*}
The pre-Lie identity implies that for any $x,y\in S^+(V)$,
\begin{align}
\label{EQ11} &F_g(x'\bullet y)x''+F_g(x\bullet y')y''+F_g(x'\bullet y')x''y''\\
\nonumber&=F_g(y'\bullet x)y''+F_g(y\bullet x')x''+F_g(y'\bullet x')x''y''
\end{align}
Let $(e_i)_{i\in I}$ be a basis of $V$, such that there exists $i_0\in I$, with $f(e_i)=\delta_{i,i_0}$ for any $i\in I$.
By (\ref{EQ11}) for $x=e_{i_0}$, primitive element, for any $y\in S^+(V)$,
\begin{align*}
F_g(e_{i_0}\bullet y')y''&=F_g(y'\bullet e_{i_0})y''.
\end{align*}
Applying $e_{i_0}^*$ on both sides,
\begin{align*}
F_g\left(e_{i_0}\bullet \frac{\partial y}{\partial e_{i_0}}\right)=F_g\left(\frac{\partial y}{\partial e_{i_0}}\bullet e_{i_0}\right).
\end{align*}
By surjectivity of $\dfrac{\partial}{\partial e_{i_0}}$, for any $z\in S^+(V)$,
\begin{align*}
F_g(z\bullet e_{i_0})&=F_g(z\bullet e_{i_0}).
\end{align*}
Let us consider $\displaystyle z=\prod e_i^{\alpha_i}$, with $\sum \alpha_i=n\geq 1$. Then
\begin{align*}
z\bullet e_{i_0}&=\alpha_{i_0} z,\\
e_{i_0}\bullet z&=\begin{cases}
\displaystyle \sum_{k=0}^{\alpha_{i_0}} \frac{n!}{(n-k)!} \lambda^ke_{i_0}^{\alpha_{i_0}-k}\prod_{i\neq i_0} e_i^{\alpha_i} \mbox{ if }n\neq \alpha_{i_0},\\
\displaystyle \sum_{k=0}^{\alpha_{i_0}-1} \frac{n!}{(n-k)!} \lambda^ke_{i_0}^{\alpha_{i_0}-k}\prod_{i\neq i_0} e_i^{\alpha_i} \mbox{ if }n=\alpha_{i_0}.
\end{cases} \end{align*}
Noticing that $F_g(1)=0$, we obtain that
\begin{align}
\label{EQ12}F_g\left((1-\alpha_{i_0})z+\sum_{k=1}^{\alpha_{i_0}}\frac{n!}{(n-k)!} \lambda^ke_{i_0}^{\alpha_{i_0}-k}\prod_{i\neq i_0} e_i^{\alpha_i}\right)&=0.
\end{align}

We put $\mu(g)=F_g(e_{i_0})$. Let us prove that for any $n \geq 1$,
\begin{align*}
F_g(e_{i_0}^n)&=n! \lambda^{n-1}\mu(g)=n! \lambda^{n-1}\mu(g)f(e_{i_0})^n.
\end{align*}
This is obvious if $n=1$. If the result is true at all ranks $1\leq k<n$, we obtain from (\ref{EQ12}) that
\begin{align*}
0&=(1-n)F_g(e_{i_0}^n)+\sum_{k=1}^{n-1}\frac{n!}{(n-k)!} \lambda^k (n-k)!\lambda^{n-k-1}\mu(g)\\
&=(1-n)F_g(e_{i_0}^n)+(n-1)n! \lambda^{n-1}\mu(g).
\end{align*}
This implies the result at rank $n$. \\

Let us consider $j\in I\setminus \{i_0\}$. By (\ref{EQ11}), for $x=e_j$, primitive, for any $y\in S^+(V)$,
\begin{align*}
F_g(y'\bullet e_j)y''&=0.
\end{align*}
Applying $e_j^*$, for any $y\in S^+(V)$,
\begin{align*}
F_g\left(\frac{\partial y}{\partial e_j}\bullet e_j\right)&=0.
\end{align*}
By surjectivity of $\frac{\partial}{\partial e_j}$, for any $z\in S^+(V)$,
\begin{align*}
F_g(z\bullet e_j)&=0.
\end{align*}
If $z=\prod e_i^{\alpha_i}$, then
\begin{align*}
z\bullet e_j&=\alpha_{i_0} e_j e_{i_0}^{\alpha_{i_0}-1}\prod_{i\in I\setminus\{i_0\}}e_i^{\alpha_i}.
\end{align*}
Consequently, if $y=\prod e_i^{\alpha_i} \in S^+(V)$, with $j\neq i_0$ such that $\alpha_j\neq 0$, denoting $n=\sum \alpha_i$,
\begin{align*}
F_g(y)&=0=\mu(g)n!\lambda^{n-1} \prod_{i\in I} f(e_i)^{\alpha_i},
\end{align*}
as $f(e_j)=0$ and $\alpha_j\geq 1$. Finally, for any $y=\prod e_i^{\alpha_i} \in S^+(V)$, 
denoting $n=\sum \alpha_i$,
\begin{align*}
F_g(y)&=\mu(g)n!\lambda^{n-1} \prod_{i\in I} f(e_i)^{\alpha_i}.
\end{align*}
As $F_{gh}=F_g+F_h$, $\mu(gh)=F_{gh}(e_{i_0})=F_g(e_{i_0})+F_h(e_{i_0})=\mu(g)+\mu(h)$.\\

3. For any $v\in S^+(V)$, we put $\varpi_g(g\bullet hv)=\alpha(v)g$, where $\varpi_g$ is defined in the proof of Lemma \ref{lem20}.
Then
\begin{align*}
\Delta(g\bullet hv)&=g\bullet hv\otimes gh+g\bullet h\otimes ghv+g\bullet hv'\otimes ghv''+g\otimes g\bullet hv\\
&=g\bullet hv\otimes gh+\lambda(g,h)(g-gh)\otimes ghv+g\bullet hv'\otimes ghv''+g\otimes g\bullet hv.
\end{align*}
By cocommutativity,
\begin{align*}
(\varpi_g \otimes \id)\circ\Delta(g\bullet hv)&=\alpha(v)g\otimes gh+\lambda(g,h)g\otimes ghv+\alpha(v')g\otimes ghv''+g\otimes g\bullet hv\\
=(\id \otimes \varpi_g)\circ\Delta(g\bullet hv)&=\alpha(v) g\otimes g.
\end{align*}
This gives
\begin{align*}
g\bullet hv&=-\lambda(g,h)ghv+\alpha(v)(g-gh)-\alpha(v')ghv''.
\end{align*}
It remains to prove that the linear form $\alpha$ is zero. Let $x,y\in S^+(V)$.
\begin{align*}
(g\bullet hx)\bullet y&\in \K G\cdots S^+(V),\\
g\bullet (hx\bullet y)&=\left(\lambda(h)\alpha(xy)+\lambda(h)\alpha(xy'')F_g(y')+\alpha(x\bullet y)\right)(g-gh)+\mbox{terms in }\K G\cdots S^+(V),\\
(g\bullet y)\bullet hx&\in  \K G\cdots S^+(V),\\
g\bullet (y\bullet hx)&\in  \K G\cdots S^+(V).
\end{align*}
By the pre-Lie identity, for any $x,y\in S^+(V)$,
\begin{align}
\label{EQ13} \alpha\left(\lambda(h)xy+F_h(y')xy''+x\bullet y\right)&=0.
\end{align}

\textit{First sub-case}. We assume that $\lambda(h)=0$. For $x\in S^+(V)$ and $y\in V$, we obtain that
\begin{align*}
\alpha(S^+(V)\bullet V)&=(0).
\end{align*}
Let $y=\prod e_i^{\alpha_i}\in S^+(V)$, with $\sum \alpha_i=n\geq 1$. If $\alpha_{i_0}\geq 1$, then
\begin{align*}
y\bullet e_{i_0}&\alpha_{i_0}y,
\end{align*}
so $y\in S^+(V)\bullet V$. Otherwise, there exists $j\neq i_0$, such that $\alpha_j\geq 1$. 
\begin{align*}
\left(e_{i_0} e_j^{\alpha_j-1}\prod_{i\neq i_0,j} e_i^{\alpha_i}\right)\bullet e_j&=(\alpha_{i_0}+1) y=y,
\end{align*}
so $y\in S^+(V)\bullet V$. As a conclusion, $S^+(V)\bullet V=S^+(V)$, so $\alpha=0$.\\

\textit{Second sub-case}. We assume that $\lambda(h)\neq 0$. Let us first prove that the ideal $I$ generated by $\ker(f)$ is a subspace of $\ker(\alpha)$.
Let $x_1,\ldots,x_n \in V$ and $y\in \ker(f)$, let us prove that $x_1\ldots x_n y\in \ker(\alpha)$ by induction on $n$.
If $n=0$, by (\ref{EQ13}) with $x\in V$,
\begin{align*}
\alpha(\lambda(h)xy+x\bullet y)&=\alpha(\lambda(h)xy+y\bullet x)=0.
\end{align*}
Hence, $x\bullet y-y\bullet x=f(x)y-f(y)x \in \ker(\alpha)$. Choosing $g$ such that $f(x)=1$ we obtain $y\in \ker(\alpha)$.
Let us assume the result at rank $n-1$, $n\geq 1$. The following element belongs to $\ker(\alpha)$ by (\ref{EQ13}), with $x=x_1\ldots x_n$,
\begin{align*}
\lambda(h) x_1\ldots x_n y+\sum_{i=1}^n f(x_i) x_1\ldots x_{i-1}yx_{i+1}\ldots x_n.
\end{align*}
Applying the induction hypothesis and $\lambda(h)\neq 0$, we obtain that $x_1\ldots x_n y\in \ker(\alpha)$.\\

Consequently, there exists a family of scalars $(\beta(n))_{n\geq 1}$ such that for any $x_1,\ldots,x_n \in V$,
\begin{align*}
\alpha(x_1\ldots x_n)&=\beta(n) f(x_1)\ldots f(x_n),
\end{align*}
with the notations of the proof of point 2, $\beta(n)=\alpha(e_{i_0}^n)$. By (\ref{EQ13}), with $x=e_{i_0}^n$ and $y=e_{i_0}$,
for any $n\geq 1$,
\begin{align*}
\lambda(h) \beta(n+1)+n\beta(n)&=0,
\end{align*}
which gives, for any $n\geq 1$,
\begin{align*}
\beta(n)&=\frac{(-1)^{n-1} (n-1)!}{\lambda(h)^{n-1}}\beta(1).
\end{align*}
Let us assume that $\beta(1)\neq 0$. By (\ref{EQ13}) with $x=e_{i_0}$ and $y=e_{i_0}^2$, we obtain that
\begin{align*}
\frac{\beta(1)}{\lambda(h)}(-2\mu(g)+2\lambda \lambda(h)+1)&=0.
\end{align*}
Hence, $\mu(g)=\lambda \lambda(h)+\frac{1}{2}$. By (\ref{EQ13}) with $x=e_{i_0}$ and $y=e_{i_0}^3$, we obtain that
\begin{align*}
\frac{\beta(1)}{\lambda(h)^2}&=0.
\end{align*}
This is a contradiction, so $\beta(1)=0$ and, therefore, $\alpha=0$. \end{proof}

\begin{theo}\label{theo23}
Let $\bullet$ be a product on $\A$, making it a Com-PreLie bialgebra. We assume that the restriction of $\bullet$ to $S(V)$ is nonzero and that
one of the following assertions holds:
\begin{enumerate}
\item $\dim(V)\geq 2$.
\item $\dim(V)=1$ and $v\bullet 1=0$ for any $v\in V$.
\end{enumerate}
Then
\begin{enumerate}
\item There exist $f:V\longrightarrow \K$, nonzero, and $\lambda \in \K$ such that the Com-PreLie Hopf subalgebra $(S(V),m,\bullet,\Delta)$ 
is equal to $S(V,f,\lambda)$.
\item There exist a family of scalars $(\lambda(g,h))_{g,h\in G}$ satisfying the following conditions:
\begin{align*}
&\forall g\in G,&\lambda(g,1)&=0.\\
&\forall g,h,k\in G,&\lambda(gh,k)&=\lambda(g,k)+\lambda(h,k).\\
&\forall g,h,k\in G\mbox{ with }kh\neq 1,&&\lambda(g,hk)\lambda(h,k)-\lambda(g,h)\lambda(h,k)\\
&&&=\lambda(g,hk)\lambda(k,h)-\lambda(g,k)\lambda(k,h).\\
&\forall g,h\in G,&\lambda(g,g)&=0.
\end{align*}
such that for any $g,h\in G$, $x\in S^+(V)$,
\begin{align*}
g\bullet h&=\lambda(g,h)(g-gh),&g\bullet hx&=-\lambda(g,h)ghx.
\end{align*}
\item For any $x\in S(V)$, $g\in G\setminus \{1\}$, $y\in S(V)$, $x\bullet gy=0$. 
\item There exist group morphisms $\lambda,\mu:G\longrightarrow (\K,+)$ such that for any $g\in G$, $x_1,\ldots,x_n \in V$, $n\geq 1$,
\begin{align*}
g\bullet x_1\ldots x_n&=\lambda(g) gx_1\ldots x_n+\mu(g)g\sum_{\emptyset \subsetneq I\subsetneq [n]}
|I|! \lambda^{|I|-1} \prod_{i\in I} f(x_i) \prod_{i\notin I} x_i.
\end{align*}
Conversely, if one define a product $\bullet$ on $\A$ with the point 1.-4. and, for any $g\in G$, $x\in S(V)$, $y\in \A$,
\begin{align*}
gx\bullet y&=(g\bullet y)x+g(x\bullet y),
\end{align*}
then $(\A,m,\bullet,\Delta)$ is a Com-PreLie bialgebra.
\end{enumerate}\end{theo}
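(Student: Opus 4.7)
The argument splits naturally into the forward implication (extracting the data from the given Com-PreLie bialgebra) and the converse (verifying the axioms for the reconstructed product). For the forward direction, I would extract the four pieces of structure sequentially, using the propositions of the preceding subsection, which are set up for this purpose. First, Proposition \ref{prop21} applies under our hypothesis: part (3) yields that $\K G$ is a preLie subalgebra (the assumption that the restriction of $\bullet$ to $S(V)$ is nonzero covers both cases of the theorem), part (2) gives that $S(V)$ is also a preLie subalgebra, and part (1) establishes point 3. Then Theorem \ref{theo16} applied to the sub-Com-PreLie bialgebra $\K G$ produces the family $(\lambda(g,h))_{g,h \in G}$ together with the first three conditions listed in point 2 and the formula $g\bullet h=\lambda(g,h)(g-gh)$. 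Theorem \ref{theo9} then classifies $(S(V),\bullet)$ as either $S(V,f,\lambda)$ or $\g^{(1)}(1,a,1)$; the latter option is ruled out in both cases of the theorem (by dimension when $\dim(V)\ge 2$, and by the relation $X\bullet 1 = aX$ clashing with the hypothesis $v\bullet 1 = 0$ when $\dim(V)=1$). Thus $(S(V),\bullet)\cong S(V,f,\lambda)$, with $f\ne 0$ since the restriction is nonzero, which is point 1.

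Proposition \ref{prop22} next describes the cross-interaction between $\K G$ and $S^+(V)$. Part (1) produces an additive $\lambda:G\to(\K,+)$ and a family $F_g:S^+(V)\to\K$ with $g\bullet v = \lambda(g)\,gv + F_g(v')\,gv''$; part (2), which uses $f\ne 0$, pins down $F_g(x_1\cdots x_n) = \mu(g)\,n!\,\lambda^{n-1}f(x_1)\cdots f(x_n)$ for an additive $\mu:G\to(\K,+)$; and part (3) gives $g\bullet hv = -\lambda(g,h)\,ghv$ for $g,h\in G\setminus\{1\}$ and $v\in S^+(V)$. Substituting the explicit $F_g$ into the formula of part (1) and expanding $\tdelta(x_1\cdots x_n)=\sum_{\emptyset\subsetneq I\subsetneq [n]}\prod_{i\in I} x_i \otimes \prod_{i\notin I}x_i$ yields point 4; combined with part (3) this establishes point 2 apart from the condition $\lambda(g,g)=0$. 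That remaining condition I would derive by applying the preLie identity to a triple involving two copies of $g$ and a mixed element of the form $gv$, combining the formula for $g\bullet g$ from Theorem \ref{theo16} with that for $g\bullet gv$ from Proposition \ref{prop22}.3 to isolate $\lambda(g,g)$ from the contributions of $\lambda$ and $\mu$.

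For the converse, given the data satisfying all listed conditions, one defines $\bullet$ on the generating elements by the formulas of points 1--4 and extends to all of $\A$ via the Leibniz rule $gx\bullet y = (g\bullet y)x + g(x\bullet y)$. Leibniz then holds by construction. The coproduct compatibility and the preLie identity reduce, via the Leibniz extension and the multiplicativity of $\Delta$, to verifications on generating pairs $(g,h)$, $(g,x_1\cdots x_n)$, $(g,hx_1\cdots x_n)$ and the corresponding triples. Each case becomes a direct calculation in which the conditions on $(\lambda(g,h))$, the additivity of $\lambda$ and $\mu$, the identity $\lambda(g,g)=0$, and the specific structure of $S(V,f,\lambda)$ combine to make the identities close. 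I expect the main technical obstacle to be the preLie verification on mixed triples of the form $(g,h,kx)$: the full strength of the multiplicative condition on $(\lambda(g,h))$ must be combined with the explicit $S(V,f,\lambda)$-formulas, and the condition $\lambda(g,g)=0$ is precisely what is needed when two of the group elements coincide.
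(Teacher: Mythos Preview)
Your plan is correct and follows essentially the same route as the paper: Proposition \ref{prop21} and Theorem \ref{theo9} give points 1 and 3 and the subalgebra structure on $\K G$; Theorem \ref{theo16} and Proposition \ref{prop22} give points 2 and 4; and the converse is a case-by-case preLie verification on generating triples. The one organisational difference is that the paper does not derive $\lambda(g,g)=0$ separately in the forward direction: instead, during the converse it first extracts the intermediate identity $\lambda(h,h^{-1})=-\lambda(h,h)$ from the triple $(x,gv,hw)$ with $x\in V$ and $gh=1$, and then the triple $(g,hv,kw)$ with $hk=1$ and $g=h$ forces $\lambda(h,h)=0$, which simultaneously completes both directions; your proposal to isolate this condition directly via a preLie identity on $(g,g,gv)$ is equivalent and perhaps slightly cleaner.
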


\begin{proof}
Let $\bullet$ be a product on $\A$, making it a Com-PreLie bialgebra.By Proposition \ref{prop21}-2, 
we obtain that $S(V)$ is a Com-PreLie subalgebra of $\A$. Theorem \ref{theo9} and the hypotheses on $S(V)$ imply that 
$(S(V),\bullet)=S(V,f,\lambda)$ for well-chosen $f$ and $\lambda$, which is point 1.
By Proposition \ref{prop21}-3, $\K G$ is a Com-PreLie subalgebra of $\A$. Theorem \ref{theo16} gives the existence of the scalars
$\lambda(g,h)$ satisfying the first four points, such that for any $g,h\in G$, $g\bullet h=\lambda(g,h)(g-gh)$.
By Proposition \ref{prop22}-3, for any $g,h\in G\setminus\{1\}$, $v\in S^+(V)$, $g\bullet hv=-\lambda(g,h)ghv$,
which gives point 2. Point 3 comes from Proposition \ref{prop21}, together with the Leibniz identity,
and Point 4 from Proposition \ref{prop22}-2. 

Conversely, given $\lambda \in \K$, $f:V\longrightarrow \K$, $\mu,\lambda:G\longrightarrow (\K,+)$ be group morphisms,
and scalars $(\lambda(g,h))_{g,h\in G}$ satisfying the two first conditions of point 2, points 1-4 define a product $\bullet$ on $\A$ 
satisfying the Leibniz identity and the compatibility with the coproduct. 
It remains to prove the pre-Lie identity. Because of the Leibniz identity, it is enough to prove it in the following cases:
\begin{enumerate}
\item $x\in V$, $y\in S(V),z\in S(V)$. This comes from Theorem \ref{theo2}. 
\item $x\in G\setminus \{1\}$, $y,z\in G$. By Theorem \ref{theo16}, this holds if, and only if, the third first conditions of point 2 are satisfied.
We now assume that these conditions hold. 
\item $x\in V$, $y\in G$. This is immediate, as all terms in the pre-Lie identity are zero.
\item $x\in V$, $y\in S(V)$, $z\in G\setminus\{1\}\cdots S^+(V)$. This is immediate, as all terms in the pre-Lie identity are zero.
\item $x\in V$, $y,z\in G\setminus\{1\}\cdots S^+(V)$. We put $y=gv$ and $z=hw$. If $gh\neq 1$, all terms in the pre-Lie identity are zero.
Otherwise,
\begin{align*}
x\bullet (gv\bullet hw)-(x\bullet gv)\bullet hw&=\lambda(g,h)x\bullet vw,\\
x\bullet (hw\bullet gv)-(x\bullet hw)\bullet gvw&=\lambda(h,g)x\bullet vw.
\end{align*}
As $\lambda(h^{-1},h)=-\lambda(h,h)$.
For well-chosen $x,v,w$, $x\bullet vw\neq 0$, so the pre-Lie identity holds in this case if, and only if, for any $h\in G$,
\begin{align}
\label{EQ14}\lambda(h,h^{-1})&=-\lambda(h,h).
\end{align}
We now assume that this condition holds. Note that it is implies by the fourth condition of point 2.
\item $x\in G\setminus\{1\}$, $y\in S(V)$: this is proved by direct computations, separating the cases $z\in S(V)$, $z\in G$ and
$z\in G\setminus\{1\}\cdots S^+(V)$.
\item $x,y\in G\setminus \{1\}$, $z\in G\setminus\{1\}\cdots S^+(V)$. We put $x=g$, $y=hv$, $z=kw$.
If $hk\neq 1$, this is proved by direct computations. If $hk=1$, the pre-Lie identity is satisfied if, and only if
\begin{align*}
\lambda(h,k)(\lambda(g,h)-\lambda(g,k))&=0.
\end{align*}
In particular, if $g=h$, by (\ref{EQ14}),
\begin{align*}
\lambda(h,k)(\lambda(g,h)-\lambda(g,k))&=-\lambda(h,h)(\lambda(h,h)+\lambda(h,h))=2\lambda(h,h)^2=0,
\end{align*}
so $\lambda(h,h)=0$: this is the fourth condition of point 2.
\item $x\in G\setminus \{1\}$, $y,z\in G\setminus\{1\}\cdots S^+(V)$. If the conditions of point 2 hold, a direct computation shows that
the pre-Lie identity is satisfied in this case.
\end{enumerate}
Finally, if $\bullet$ gives $\A$ a Com-PreLie bialgebra structure, then points 1-4 are satisfied, and conversely. \end{proof}

Let us now consider the coefficients $\lambda(g,h)$ satisfying the conditions of point 2 if $G=\Z$.

\begin{prop}
Let $(\lambda(g,h))_{g,h\in \Z}$ be coefficients satisfying the conditions of point 2 of Theorem \ref{theo23}, with $G=\Z$.
Then for any $g,h\in \Z$, $\lambda(g,h)=0$.
\end{prop}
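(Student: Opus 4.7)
The plan is to observe that only three of the four conditions of point 2 of Theorem \ref{theo23} are needed, and together they force everything to vanish by a very short calculation. Writing $\Z$ additively, the conditions become: $\lambda(g,0)=0$; $\lambda(g+h,k)=\lambda(g,k)+\lambda(h,k)$; the preLie-type identity for $h+k\neq 0$; and $\lambda(g,g)=0$.

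The first step is to exploit additivity. For any fixed $k\in \Z$, the second condition says that the map $g\mapsto \lambda(g,k)$ is a group morphism from $(\Z,+)$ to $(\K,+)$. Such a morphism is entirely determined by its value at $1$, so there exists a scalar $a_k\in \K$ with
\[\lambda(g,k)=g\,a_k\qquad \text{for all }g\in \Z.\]

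The second step is to use condition (4), $\lambda(g,g)=0$. Substituting the formula above yields $g\,a_g=0$ for every $g\in \Z$, and therefore $a_g=0$ for every nonzero $g$. Finally, $a_0=\lambda(1,0)=0$ by the first condition. Hence $a_k=0$ for all $k\in \Z$, so $\lambda(g,h)=g\,a_h=0$ for all $g,h\in\Z$.

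There is no real obstacle here: the preLie-like compatibility (the third condition) is not even used, and the argument reduces the problem to classifying additive functions $\Z\to\K$, which are trivially linear. The reason the conclusion is so strong for $G=\Z$ is precisely that every element $g$ satisfies $\lambda(g,g)=0$ together with $\lambda(\cdot,g)$ being $\Z$-linear, forcing $\lambda(1,g)=0$.
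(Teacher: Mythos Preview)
Your proof is correct and considerably shorter than the paper's. The paper invokes the full classification of Theorem~\ref{theo18}: it shows that any family $(\lambda(g,h))_{g,h\in\Z}$ satisfying the first three conditions falls into one of three explicit cases (zero, a one-parameter family supported on a single $k_0$, or a two-parameter family governed by $\alpha,\beta$), and then checks that the extra constraint $\lambda(g,g)=0$ kills the two nontrivial families. By contrast, you bypass that classification entirely: once additivity in the first variable gives $\lambda(g,k)=g\,a_k$, the diagonal condition $\lambda(g,g)=g\,a_g=0$ finishes the job immediately (using $\mathrm{char}\,\K=0$).

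The upshot is that you never need the preLie-type identity (condition~3), whereas the paper's route relies on it indirectly through Theorem~\ref{theo18}. Your argument is more elementary and more robust; the paper's approach has the advantage of illustrating how the extra condition $\lambda(g,g)=0$ interacts with the earlier classification, but for this particular proposition that machinery is overkill.
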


\begin{proof} We use Theorem \ref{theo18}. In the second case, there exist $k_0\in \Z$, nonzero, $a\in \K$, nonzero, such that for any $k,l\in \Z$,
\begin{align*}
\lambda(k,l)&=a\delta_{l,k_0} k.
\end{align*}
Then $\lambda(k_0,k_0)=ak_0=0$, so $k_0=0$: this is a contradiction. 
In the third case, there exist nonzero scalars $\alpha,\beta$ and $N\geq 1$ such that for any $k,l\in \Z$,
\begin{align*}
\lambda(k,l)&=\begin{cases}
\displaystyle\frac{\alpha\beta}{\frac{l}{N}(\alpha-\beta)+2\beta-\alpha}\mbox{ if }N\mid l,\\
0\mbox{ otherwise}.
\end{cases}
\end{align*}
Then
\begin{align*}
\lambda(N,N)&=\alpha=0.
\end{align*}
This is a contradiction. Consequently, $\lambda(k,l)=0$ for any $k,l\in \Z$.
\end{proof}

\bibliographystyle{amsplain}
\bibliography{biblio}

\providecommand{\bysame}{\leavevmode\hbox to3em{\hrulefill}\thinspace}
\providecommand{\MR}{\relax\ifhmode\unskip\space\fi MR }
\providecommand{\MRhref}[2]{%
  \href{http://www.ams.org/mathscinet-getitem?mr=#1}{#2}
}
\providecommand{\href}[2]{#2}
\begin{thebibliography}{10}

\bibitem{Brouder2004}
Christian Brouder, \emph{Trees, renormalization and differential equations},
  BIT \textbf{44} (2004), no.~3, 425--438.

\bibitem{Chapoton2001}
Fr\'{e}d\'{e}ric Chapoton and Muriel Livernet, \emph{Pre-{L}ie algebras and the
  rooted trees operad}, Internat. Math. Res. Notices (2001), no.~8, 395--408.

\bibitem{Connes1998}
Alain Connes and Dirk Kreimer, \emph{Hopf algebras, renormalization and
  noncommutative geometry}, Comm. Math. Phys. \textbf{199} (1998), no.~1,
  203--242.

\bibitem{Ebrahimi-Fard2017-2}
Kurusch {Ebrahimi-Fard}, Fr\'ed\'eric {Fauvet}, and Dominique {Manchon},
  \emph{{A comodule-bialgebra structure for word-series substitution and mould
  composition}}, {J. Algebra} \textbf{489} (2017), 552--581.

\bibitem{Ecalle2004}
Jean Ecalle and Bruno Vallet, \emph{The arborification-coarborification
  transform: analytic, combinatorial, and algebraic aspects}, Ann. Fac. Sci.
  Toulouse, Math. (6) \textbf{13} (2004), no.~4, 575--657.

\bibitem{Fauvet2017}
Fr{\'e}d{\'e}ric Fauvet and Fr{\'e}d{\'e}ric Menous, \emph{Ecalle's
  arborification-coarborification transforms and {Connes}-{Kreimer} {Hopf}
  algebra}, Ann. Sci. {\'E}c. Norm. Sup{\'e}r. (4) \textbf{50} (2017), no.~1,
  39--83.

\bibitem{Floystad2018}
Gunnar Fl{\o}ystad and Hans Munthe-Kaas, \emph{Pre- and post-{Lie} algebras:
  the algebro-geometric view}, Computation and combinatorics in dynamics,
  stochastics and control. The Abel symposium, Rosendal, Norway, August 16--19,
  2016. Selected papers, Cham: Springer, 2018, pp.~321--367.

\bibitem{Foissy28}
Lo{\"{\i}}c Foissy, \emph{The {H}opf algebra of {F}liess operators and its dual
  pre-{L}ie algebra}, Comm. Algebra \textbf{43} (2015), no.~10, 4528--4552.

\bibitem{Foissy30}
\bysame, \emph{A pre-{L}ie algebra associated to a linear endomorphism and
  related algebraic structures}, Eur. J. Math. \textbf{1} (2015), no.~1,
  78--121.

\bibitem{Foissy21}
Lo{\"\i}c Foissy and Fr\'ed\'eric Patras, \emph{Natural endomorphisms of
  shuffle algebras}, Internat. J. Algebra Comput. \textbf{23} (2013), no.~4,
  989--1009.

\bibitem{Gerstenhaber63}
M.~Gerstenhaber, \emph{The cohomology structure of an associative ring}, Ann.
  Math. (2) \textbf{78} (1963), 267--288.

\bibitem{Gray2011}
W.~Steven Gray and Luis~A. Duffaut~Espinosa, \emph{A {F}a\`a di {B}runo {H}opf
  algebra for a group of {F}liess operators with applications to feedback},
  Systems Control Lett. \textbf{60} (2011), no.~7, 441--449.

\bibitem{Grossman89}
Robert~L. Grossman and Richard~G. Larson, \emph{Hopf-algebraic structure of
  families of trees}, J. Algebra \textbf{126} (1989), no.~1, 184--210.

\bibitem{Grossman2005}
\bysame, \emph{Differential algebra structures on families of trees}, Adv. in
  Appl. Math. \textbf{35} (2005), no.~1, 97--119.

\bibitem{Loday1995}
Jean-Louis Loday, \emph{Cup-product for {Leibniz} cohomology and dual {Leibniz}
  algebras}, Math. Scand. \textbf{77} (1995), no.~2, 189--196.

\bibitem{Mansuy2013}
Anthony Mansuy, \emph{Preordered forests, packed words and contraction
  algebras}, J. Algebra \textbf{411} (2014), 259--311.

\bibitem{Schutzenberger1958}
Marcel~Paul Sch\"utzenberger, \emph{Sur une propri\'et\'e combinatoire des
  alg\`ebres de {L}ie libres pouvant \^etre utilis\'ee dans un probl\`eme de
  math\'ematiques appliqu\'ees}, S\'eminaire Dubreil--Jacotin Pisot (Alg\`ebre
  et th\'eorie des nombres) (1958/59).

\bibitem{Vinberg63}
Ernest~B. Vinberg, \emph{The theory of convex homogeneous cones}, Trans. Mosc.
  Math. Soc. \textbf{12} (1963), 340--403.

\end{thebibliography}

\end{document}